\newtheorem{theorem}{Theorem}
\newtheorem{corollary}[theorem]{Corollary}
\newtheorem{proposition}[theorem]{Proposition}
\newtheorem{lemma}[theorem]{Lemma}
\theoremstyle{definition}
\newtheorem{remark}[theorem]{Remark}
\newcommand{\R}{\mathbb{R}} 
\newcommand{\rvec}[1]{\ensuremath{\vv{\ensuremath{#1}}}}
\newcommand{\lvec}[1]{\reflectbox{\ensuremath{\vv{\reflectbox{\ensuremath{#1}}}}}}
\renewcommand{\c}[1]{\mathcal{#1}} 
\newcommand{\h}{\widehat} 
\newcommand{\ol}{\overline}
\newcommand{\ul}{\underline}
\renewcommand{\b}{\bar}
\newcommand\ub[1]{\underaccent{\bar}{#1}}
\newcommand{\cH}{\c{H}}
\newcommand{\cL}{\c{L}}
\newcommand{\cO}{\c{O}}
\newcommand{\cR}{\c{R}}
\newcommand{\cT}{\c{T}}
\newcommand{\uL}{\mathrm{L}}
\newcommand{\uM}{\mathrm{M}}
\newcommand{\uR}{\mathrm{R}}
\newcommand{\pmn}{[\ol{n}]\cup [n]}
\newcommand{\seq}{\subseteq}
\newcommand{\hj}{{\widehat{\jmath}}}
\DeclareMathOperator{\rank}{rk} 
\DeclareMathOperator{\codim}{codim} 
\DeclareMathOperator{\id}{id}
\DeclareMathOperator{\vspan}{span}
\newcommand{\meet}{\wedge} 
\newcommand{\join}{\vee} 
\definecolor{darkblue}{rgb}{0,0,0.7} 
\definecolor{darkred}{rgb}{0.7,0,0} 
\newcommand{\darkblue}{\color{darkblue}} 
\newcommand{\red}{\color{darkred}}
\newcommand{\defn}[1]{\textsl{\darkblue #1}} 
\newcommand{\OEIS}[1]{{\rm \href{http://oeis.org/#1}{#1}}}
\newenvironment{algo}[2]%
{{\bfseries #1} {\itshape (#2)}.}{}
\title[Combinatorial generation via permutation languages]{Combinatorial generation via permutation languages. \\ VII. Supersolvable hyperplane arrangements}
\author{Sofia Brenner}
\address[Sofia Brenner]{Institut f\"ur Mathematik, Universit\"at Kassel, Germany}
\email{sbrenner@mathematik.uni-kassel.de}
\author{Jean Cardinal}
\address[Jean Cardinal]{Computer Science Department, Universit\'e Libre de Bruxelles (ULB), Belgium}
\email{jean.cardinal@ulb.be}
\author{Thomas McConville}
\address[Thomas McConville]{Department of Mathematics, Kennesaw State University, United States}
\email{tmcconvi@kennesaw.edu}
\author{\\Arturo Merino}
\address[Arturo Merino]{Universidad de O'Higgins, Rancagua, Chile}
\email{arturo.merino@uoh.cl}
\author{Torsten M\"utze}
\address[Torsten M\"utze]{Institut f\"ur Mathematik, Universit\"at Kassel, Germany}
\email{tmuetze@mathematik.uni-kassel.de}
\thanks{An extended abstract of this paper has been accepted for presentation at SODA~2026.}
\thanks{Sofia Brenner and Torsten M\"utze were supported by German Science Foundation grant~522790373.
Jean Cardinal was supported by the Fonds de la Recherche Scientifique-FNRS under grant n°~T003325F.
Arturo Merino was supported by ANID FONDECYT Iniciación No.~11251528.}
\begin{document}

\begin{abstract}
For an arrangement~$\cH$ of hyperplanes in~$\R^n$ through the origin, a \emph{region} is a connected subset of~$\R^n\setminus\cH$.
The \emph{graph of regions}~$G(\cH)$ has a vertex for every region, and an edge between any two vertices whose corresponding regions are separated by a single hyperplane from~$\cH$.
We aim to compute a Hamiltonian path or cycle in the graph~$G(\cH)$, i.e., a path or cycle that visits every vertex (=region) exactly once.
Our first main result is that if~$\cH$ is a supersolvable arrangement, then the graph of regions~$G(\cH)$ has a Hamiltonian cycle.
More generally, we consider quotients of lattice congruences of the poset of regions~$P(\cH,R_0)$, obtained by orienting the graph~$G(\cH)$ away from a particular base region~$R_0$.
Our second main result is that if~$\cH$ is supersolvable and~$R_0$ is a canonical base region, then for any lattice congruence~$\equiv$ on~$P(\cH,R_0)=:L$, the cover graph of the quotient lattice~$L/{\equiv}$ has a Hamiltonian path.

These paths and cycles are constructed by a generalization of the well-known Steinhaus-Johnson-Trotter algorithm for listing permutations.
This algorithm is a classical instance of a \emph{combinatorial Gray code}, i.e., an algorithm for generating a set of combinatorial objects by applying a small change in each step.
When applying our two main results to well-known supersolvable arrangements, such as the coordinate arrangement, and the braid arrangement and its subarrangements, we recover a number of known Gray code algorithms for listing various combinatorial objects, such as binary strings, binary trees, triangulations, rectangulations, acyclic orientations of graphs, congruence classes of quotients of the weak order on permutations, and of acyclic orientation lattices.
These were obtained earlier from the framework of zigzag languages of permutations proposed by Hartung, Hoang, M\"{u}tze, and Williams (\textit{Trans.\ Amer.\ Math.\ Soc.}, 2022).
When applying our main results to the type~$B$ Coxeter arrangement and its subarrangements, we obtain a number of new Gray code algorithms for listing (pattern-avoiding) signed permutations, symmetric triangulations, acyclic orientations of certain signed graphs, and in general for combinatorial families of Coxeter type~$B$, which generalizes the theory of zigzag languages to signed permutations.
Our approach also yields new Hamiltonicity results for large classes of polytopes, in particular signed graphic zonotopes and the type~$B$ quotientopes of Padrol, Pilaud, and Ritter (\textit{Int.\ Math.\ Res.\ Not.}, 2023).
\end{abstract}

\maketitle

\section{Introduction}\label{sec:introduction}

\subsection{Combinatorial generation}

Given a family of combinatorial objects, such as permutations, binary strings, binary trees, partitions etc., the \defn{combinatorial generation} problem asks to efficiently list all objects from the family, one after the other and each object exactly once.
This algorithmic problem is closely related to the problems of counting, random sampling, and optimization.
It is covered in depth in Knuth's seminal book `The Art of Computer Programming Vol.~4A'~\cite{MR3444818}.
In order for the generation algorithm to be efficient, it is often helpful to list the objects in a way such that any two consecutive objects differ only by a small, elementary change, sometimes called a \defn{flip}.
An example for this is the classical binary reflected Gray code for binary strings, in which all binary strings of a fixed length~$n$ are listed in such a way that any two consecutive strings differ only in a single bit~\cite{gray_1953}.
The term \defn{combinatorial Gray code} is used more broadly for any listing of combinatorial objects subject to an appropriately defined flip operation between successive objects~\cite{MR1491049,MR4649606}.
The flip operation also gives rise to a \defn{flip graph} on the set of objects, in which two objects are adjacent whenever they differ by a single flip.
The problem of finding a combinatorial Gray code amounts to finding a \defn{Hamiltonian path} in the corresponding flip graph, i.e., a path that visits every vertex exactly once.
A Gray code is \defn{cyclic} if the first and last object also differ in a flip, and this corresponds to a \defn{Hamiltonian cycle} in the flip graph.

\subsection{The Steinhaus-Johnson-Trotter algorithm for permutations}
\label{sec:SJT}

\begin{figure}[b!]
\begin{center}
\begin{tabular}{cc}
\raisebox{-\height}{\begin{tabular}{cc}
\toprule
$n$ & \\ \midrule
1 & 1 \\
2 & 1{\red 2}, {\red 2}1 \\
3 & 12{\red 3}, 1{\red 3}2, {\red 3}12, {\red 3}21, 2{\red 3}1, 21{\red 3} \\
4 & 123{\red 4}, 12{\red 4}3, 1{\red 4}23, {\red 4}123, \\
& {\red 4}132, 1{\red 4}32, 13{\red 4}2, 132{\red 4}, \\
& 312{\red 4}, 31{\red 4}2, 3{\red 4}12, {\red 4}312, \\
& {\red 4}321, 3{\red 4}21, 32{\red 4}1, 321{\red 4}, \\
& 231{\red 4}, 23{\red 4}1, 2{\red 4}31, {\red 4}231, \\
& {\red 4}213, 2{\red 4}13, 21{\red 4}3, 213{\red 4}\hspace{3pt} \\
\bottomrule
\end{tabular}} & \hspace{5mm}
\raisebox{-\height}{\begin{tikzpicture}%
[x={(0.105480cm, -0.368869cm)},
y={(0.994421cm, 0.039067cm)},
z={(0.000064cm, 0.928660cm)},
scale=1.000000,
backr/.style={color=red!30!white, thick},
backb/.style={color=blue!30!white, thick},
edger/.style={color=red!95!black, thick},
edgeb/.style={color=blue!95!black, thick},
facet/.style={fill=red!95!black,fill opacity=0.1},
vertex/.style={inner sep=1pt,rectangle,fill=white,thick,scale=.7}]
%
%
\coordinate (4132) at (-0.70711, -1.22474, -1.73205);
\coordinate (1432) at (-0.70711, -2.04124, -0.57735);
\coordinate (4312) at (-1.41421, 0.00000, -1.73205);
\coordinate (1342) at (-1.41421, -1.63299, 0.57735);
\coordinate (3412) at (-2.12132, 0.40825, -0.57735);
\coordinate (3142) at (-2.12132, -0.40825, 0.57735);
\coordinate (4123) at (0.70711, -1.22474, -1.73205);
\coordinate (1423) at (0.70711, -2.04124, -0.57735);
\coordinate (4321) at (-0.70711, 1.22474, -1.73205);
\coordinate (1324) at (-0.70711, -1.22474, 1.73205);
\coordinate (3421) at (-1.41421, 1.63299, -0.57735);
\coordinate (3124) at (-1.41421, 0.00000, 1.73205);
\coordinate (4213) at (1.41421, 0.00000, -1.73205);
\coordinate (1243) at (1.41421, -1.63299, 0.57735);
\coordinate (4231) at (0.70711, 1.22474, -1.73205);
\coordinate (1234) at (0.70711, -1.22474, 1.73205);
\coordinate (3241) at (-0.70711, 2.04124, 0.57735);
\coordinate (3214) at (-0.70711, 1.22474, 1.73205);
\coordinate (2413) at (2.12132, 0.40825, -0.57735);
\coordinate (2143) at (2.12132, -0.40825, 0.57735);
\coordinate (2431) at (1.41421, 1.63299, -0.57735);
\coordinate (2134) at (1.41421, 0.00000, 1.73205);
\coordinate (2341) at (0.70711, 2.04124, 0.57735);
\coordinate (2314) at (0.70711, 1.22474, 1.73205);
\draw[backr] (4132) -- (1432);
\draw[backb] (4132) -- (4123);
\draw[backr] (4312) -- (3412);
\draw[backb] (4312) -- (4321);
\draw[backr] (3412) -- (3142);
\draw[backr] (3142) -- (3124);
\draw[backr] (4321) -- (3421);
\draw[backr] (3421) -- (3241);
\node[vertex] at ([xshift=5pt,yshift=5pt]3142)     {3142};
\node[vertex] at ([xshift=3pt]4132)     {4132};
\node[vertex] at (4312)     {4312};
\node[vertex] at ([xshift=3pt]3412)     {3412};
\node[vertex] at ([xshift=3pt]4321)     {4321};
\node[vertex] at (3421)     {3421};
\fill[facet] (2134) -- (1234) -- (1243) -- (2143) -- cycle {};
\fill[facet, fill=blue] (2314) -- (3214) -- (3124) -- (1324) -- (1234) -- (2134) -- cycle {};
\fill[facet] (1234) -- (1324) -- (1342) -- (1432) -- (1423) -- (1243) -- cycle {};
\fill[facet] (2314) -- (3214) -- (3241) -- (2341) -- cycle {};
\fill[facet] (2431) -- (4231) -- (4213) -- (2413) -- cycle {};
\fill[facet] (2143) -- (1243) -- (1423) -- (4123) -- (4213) -- (2413) -- cycle {};
\fill[facet] (2314) -- (2134) -- (2143) -- (2413) -- (2431) -- (2341) -- cycle {};
\fill[facet, fill=blue] (4123) -- (4213) -- (4231) -- (4321) -- (4312) -- (4132) -- cycle {};
\draw[edger] (1432) -- (1342);
\draw[edger] (1342) -- (1324);
\draw[edger] (4123) -- (1423);
\draw[edger] (1423) -- (1243);
\draw[edgeb] (1324) -- (3124);
\draw[edgeb] (4213) -- (4231);
\draw[edger] (4213) -- (2413);
\draw[edger] (1243) -- (1234);
\draw[edger] (4231) -- (2431);
\draw[edgeb] (1234) -- (2134);
\draw[edger] (3241) -- (3214);
\draw[edgeb] (3214) -- (2314);
\draw[edger] (2413) -- (2143);
\draw[edger] (2143) -- (2134);
\draw[edger] (2431) -- (2341);
\draw[edger] (2341) -- (2314);
\node[vertex] at (1432)     {1432}; 
\node[vertex] at (1342)     {1342}; 
\node[vertex] at (4123)     {4123}; 
\node[vertex] at (1423)     {1423}; 
\node[vertex] at (1324)     {1324}; 
\node[vertex] at (3124)     {3124}; 
\node[vertex] at (4213)     {4213}; 
\node[vertex] at (1243)     {1243}; 
\node[vertex] at (4231)     {4231}; 
\node[vertex] at (1234)     {1234}; 
\node[vertex] at (3241)     {3241}; 
\node[vertex] at (3214)     {3214}; 
\node[vertex] at (2413)     {2413}; 
\node[vertex] at (2143)     {2143}; 
\node[vertex] at (2431)     {2431}; 
\node[vertex] at (2134)     {2134}; 
\node[vertex] at (2341)     {2341}; 
\node[vertex] at (2314)     {2314}; 
\end{tikzpicture}} \\ & \\
(a) & (b)
\end{tabular}
\end{center}
\caption{(a)~Steinhaus-Johnson-Trotter listings of permutations for $n=1,\ldots,4$, with the largest element~$n$ highlighted; (b)~visualization of the listing for $n=4$ as a Hamiltonian cycle in the permutahedron.}
\label{fig:permA4cycle}
\end{figure}

An early, and now classical example of a cyclic Gray code is the so-called Steinhaus-Johnson-Trotter algorithm~\cite{MR157881,MR159764,DBLP:journals/cacm/Trotter62} for listing all permutations of $[n]:=\{1,\ldots,n\}$ by \defn{adjacent transpositions}, i.e., any two consecutive permutations differ by an exchange of two entries at adjacent positions.
The method is also known as `plain changes' and has been used since a long time by bell ringers (see~\cite{MR1416624}).
An easy way to describe this listing of permutations is by the following greedy algorithm due to Williams~\cite{MR3126386}:
Start with the identity permutation, and then repeatedly apply an adjacent transposition to the last permutation in the list that involves the largest possible value so as to create a new permutation, which then gets added to the list.
Equivalently, the construction can be described by induction on~$n$.
Given the listing~$L_{n-1}$ for permutations of length~$n-1$, the listing~$L_n$ for permutations of length~$n$ is obtained by replacing every permutation~$\pi$ in~$L_{n-1}$ by $n$ permutations of length~$n$ obtained by inserting the largest value~$n$ in all possible positions in~$\pi$.
Specifically, the value~$n$ moves alternatingly from right to left and from left to right in the successive permutations of~$L_{n-1}$, in a \defn{zigzag} pattern; see Figure~\ref{fig:permA4cycle}\,(a).
This algorithm can be implemented \defn{looplessly}, i.e., so that each permutation is generated in constant time, while using only linear memory to store the current permutation and a few auxiliary arrays.

The corresponding flip graph for this problem, namely the Cayley graph of the symmetric group generated by adjacent transpositions, is the well-known \defn{permutahedron}, and the Steinhaus-Johnson-Trotter algorithm yields a Hamiltonian cycle in this graph, i.e., the first and last permutation also differ in an adjacent transposition; see Figure~\ref{fig:permA4cycle}\,(b).

\subsection{Zigzag languages of permutations}
\label{sec:zigzag}

In a recent series of papers, M\"utze and coauthors showed that the simple greedy algorithm to generate the Steinhaus-Johnson-Trotter listing can be generalized to efficiently generate many other families of combinatorial objects that are in bijection to certain subsets of permutations.
Specifically, a \defn{zigzag language} is a subset of permutations of~$[n]$ that satisfies a certain closure property (see~\cite{MR4391718} for details), and via those bijections the flip operations on the permutations from the zigzag language translate to natural flip operations on the combinatorial objects.
The algorithm to generate a zigzag language presented in~\cite{MR4391718} in each step greedily moves the largest possible value in the permutation over a number of neighboring smaller entries, so as to create a new permutation in the zigzag language, which then gets added to the list.
The zigzagging framework applies to generate pattern-avoiding permutations~\cite{MR4391718}, lattice quotients of the weak order on permutations~\cite{MR4344032}, pattern-avoiding rectangulations~\cite{MR4598046}, elimination trees of chordal graphs~\cite{DBLP:journals/talg/CardinalMM25}, acyclic orientations of chordal graphs and hypergraphs and quotients of acyclic reorientation lattices~\cite{MR4614413}, and pattern-avoiding binary trees~\cite{MR4775168}.

The main contribution of this paper is to treat many of the aforementioned results under a common umbrella, namely that of hyperplane arrangements.
This yields an abstract unifying view, yet provides simpler and shorter proofs, and at the same time yields more general results, including new Hamiltonicity results for interesting flip graphs and polytopes, and concrete new Gray code algorithms for a variety of combinatorial objects.
The following sections provide three complementary views on the families of objects we aim to generate.
Much of the terminology and definitions are spelled out in more detail in Section~\ref{sec:background}.

\subsection{Hyperplane arrangements and graph of regions}
\label{sec:arrangements}

The geometric perspective that we will use is that of \defn{hyperplane arrangements}, defined as a nonempty finite set of real hyperplanes through the origin.
Given such an arrangement $\cH$ in $\R^n$, the set $\cR=\cR(\cH):=\R^n\setminus\cH$ is a collection of open convex sets called \defn{regions}.
The \defn{graph of regions}~$G(\cH)$ has $\cR(\cH)$ as its vertex set, and an edge between any two vertices corresponding to regions that are separated by exactly one hyperplane from~$\cH$; see Figure~\ref{fig:cube3}.

\begin{figure}[b!]
\makebox[0cm]{ 
\begin{tabular}{ccc}
\includegraphics[page=9]{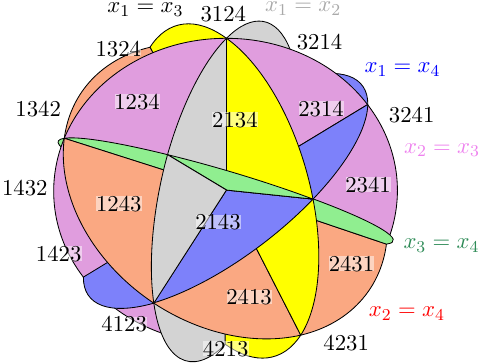} &
\includegraphics[page=10]{hyper} &
\begin{tikzpicture}%
	[x={(-0.442768cm, -0.371341cm)},
	y={(0.896636cm, -0.183347cm)},
	z={(-0.000025cm, 0.910214cm)},
	scale=2.500000,
	back/.style={dotted, thin},
	edge/.style={color=blue!95!black, thick},
	facet/.style={fill=red!95!black,fill opacity=0.1},
	vertex/.style={inner sep=2pt,rectangle,fill=white,thick,scale=.7}]
%
%

\coordinate (000) at (1.00000, 0.00000, 1.00000);
\coordinate (001) at (1.00000, 0.00000, 0.00000);
\coordinate (010) at (0.00000, 0.00000, 1.00000);
\coordinate (011) at (0.00000, 0.00000, 0.00000);
\coordinate (100) at (1.00000, 1.00000, 1.00000);
\coordinate (101) at (1.00000, 1.00000, 0.00000);
\coordinate (110) at (0.00000, 1.00000, 1.00000);
\coordinate (111) at (0.00000, 1.00000, 0.00000);
\draw[edge,back] (011) -- (111);
\draw[edge,back] (011) -- (001);
\draw[edge,back] (011) -- (010);
\node[vertex] at (011)     {011};
\fill[facet] (000) -- (010) -- (110) -- (100) -- cycle {};
\fill[facet] (101) -- (001) -- (000) -- (100) -- cycle {};
\fill[facet] (111) -- (101) -- (100) -- (110) -- cycle {};
\draw[edge] (001) -- (000);
\draw[edge] (001) -- (101);
\draw[edge] (010) -- (000);
\draw[edge] (010) -- (110);
\draw[edge] (000) -- (100);
\draw[edge] (100) -- (101);
\draw[edge] (100) -- (110);
\draw[edge] (101) -- (111);
\draw[edge] (110) -- (111);
\node[vertex] at (001)     {001};
\node[vertex] at (010)     {010};
\node[vertex] at (000)     {000};
\node[vertex] at (100)     {100};
\node[vertex] at (101)     {101};
\node[vertex] at (110)     {110};
\node[vertex] at (111)     {111};

\node[vertex] at (0,0,1.3) {\Large $G(\mathcal{H})$};
\end{tikzpicture} \\
(a) & (b) & (c) \\
\end{tabular}
}
\caption{(a) The coordinate arrangement for $n=3$ with regions labeled by binary strings; (b)~its stereographic projection from the south pole with the normal vectors for each hyperplane; (c)~its graph of regions, realized as a polytope, namely the 3-dimensional hypercube.}
\label{fig:cube3}
\end{figure}

Our goal is to construct a Hamiltonian path or cycle in~$G(\cH)$, i.e., we aim to visit all regions, one after the other and each region exactly once.
This problem generalizes a number of combinatorial generation problems and their corresponding flip graphs:
\begin{itemize}[leftmargin=5mm]
\item The \defn{coordinate arrangement} is the set of hyperplanes~$\cH$ in~$\R^n$ whose normal vectors are the canonical base vectors $\{\rvec{e_i} \mid i\in [n]\}$.
The regions of this arrangement~$\cH$ are in bijection with binary strings of length~$n$, and in the graph of regions~$G(\cH)$, any two regions differing in a single bit are joined by an edge, i.e., $G(\cH)$ is the graph of the $n$-dimensional hypercube; see Figure~\ref{fig:cube3}.
\item The \defn{braid arrangement}, also known as the \defn{type~$A$ Coxeter arrangement}, is the set of hyperplanes~$\cH$ in~$\R^n$ defined by the normal vectors $\{\rvec{e_i}-\rvec{e_j} \mid 1\leq i<j\leq n\}$.
The regions of this arrangement~$\cH$ are in bijection with permutations of~$[n]$, and in~$G(\cH)$, any two regions differing in an adjacent transposition are joined by an edge, i.e., $G(\cH)$ is the graph of the $(n-1)$-dimensional permutahedron; see Figure~\ref{fig:permA4}.
\item The \defn{type~$B$ Coxeter arrangement} is the set of hyperplanes~$\cH$ in~$\R^n$ defined by the normal vectors $\{\rvec{e_i}\pm \rvec{e_j} \mid 1\leq i<j\leq n\}\cup\{\rvec{e}_i\mid i\in[n]\}$.
The regions of this arrangement~$\cH$ are in bijection with signed permutations of~$[n]$.
A \defn{signed permutation} is a permutation of~$[n]$ in which every entry has a positive or negative sign.
In~$G(\cH)$, any two regions differing either in an adjacent transposition or a sign change of the first entry are joined by an edge, i.e., $G(\cH)$ is the graph of the $n$-dimensional $B$-permutahedron; see Figure~\ref{fig:permB3}.
\item Given a simple graph~$F=([n],E)$, the \defn{graphic arrangement} of~$F$ is the subarrangement of the braid arrangement consisting only of the hyperplanes with normal vectors~$\{\rvec{e_i} - \rvec{e_j}\mid \{i,j\}\in E\}$.
The regions of this arrangement~$\cH$ are in bijection with acyclic orientations of~$F$, and in in~$G(\cH)$, any two regions differing in reversing a single arc in the acyclic orientation of~$F$ are joined by an edge.
\end{itemize}

\begin{figure}[b!]
\makebox[0cm]{ 
\begin{tabular}{ccc}
\includegraphics[page=1,scale=0.8]{hyper} &
\includegraphics[page=3,scale=0.8]{hyper} &
\begin{tikzpicture}%
[x={(0.105480cm, -0.368869cm)},
y={(0.994421cm, 0.039067cm)},
z={(0.000064cm, 0.928660cm)},
scale=1.000000,
back/.style={dotted, thin},
edge/.style={color=blue!95!black, thick},
facet/.style={fill=red!95!black,fill opacity=0.1},
vertex/.style={inner sep=2pt,rectangle,fill=white,thick,scale=.7}]
%
%
\coordinate (4132) at (-0.70711, -1.22474, -1.73205);
\coordinate (1432) at (-0.70711, -2.04124, -0.57735);
\coordinate (4312) at (-1.41421, 0.00000, -1.73205);
\coordinate (1342) at (-1.41421, -1.63299, 0.57735);
\coordinate (3412) at (-2.12132, 0.40825, -0.57735);
\coordinate (3142) at (-2.12132, -0.40825, 0.57735);
\coordinate (4123) at (0.70711, -1.22474, -1.73205);
\coordinate (1423) at (0.70711, -2.04124, -0.57735);
\coordinate (4321) at (-0.70711, 1.22474, -1.73205);
\coordinate (1324) at (-0.70711, -1.22474, 1.73205);
\coordinate (3421) at (-1.41421, 1.63299, -0.57735);
\coordinate (3124) at (-1.41421, 0.00000, 1.73205);
\coordinate (4213) at (1.41421, 0.00000, -1.73205);
\coordinate (1243) at (1.41421, -1.63299, 0.57735);
\coordinate (4231) at (0.70711, 1.22474, -1.73205);
\coordinate (1234) at (0.70711, -1.22474, 1.73205);
\coordinate (3241) at (-0.70711, 2.04124, 0.57735);
\coordinate (3214) at (-0.70711, 1.22474, 1.73205);
\coordinate (2413) at (2.12132, 0.40825, -0.57735);
\coordinate (2143) at (2.12132, -0.40825, 0.57735);
\coordinate (2431) at (1.41421, 1.63299, -0.57735);
\coordinate (2134) at (1.41421, 0.00000, 1.73205);
\coordinate (2341) at (0.70711, 2.04124, 0.57735);
\coordinate (2314) at (0.70711, 1.22474, 1.73205);
\draw[edge,back] (4132) -- (1432);
\draw[edge,back] (4132) -- (4312);
\draw[edge,back] (4132) -- (4123);
\draw[edge,back] (4312) -- (3412);
\draw[edge,back] (4312) -- (4321);
\draw[edge,back] (1342) -- (3142);
\draw[edge,back] (3412) -- (3142);
\draw[edge,back] (3412) -- (3421);
\draw[edge,back] (3142) -- (3124);
\draw[edge,back] (4321) -- (3421);
\draw[edge,back] (4321) -- (4231);
\draw[edge,back] (3421) -- (3241);
\node[vertex] at ([xshift=5pt,yshift=5pt]3142)     {3142};
\node[vertex] at ([xshift=3pt]4132)     {4132};
\node[vertex] at (4312)     {4312};
\node[vertex] at ([xshift=3pt]3412)     {3412};
\node[vertex] at ([xshift=3pt]4321)     {4321};
\node[vertex] at (3421)     {3421};
\fill[facet] (2134) -- (1234) -- (1243) -- (2143) -- cycle {};
\fill[facet] (2314) -- (3214) -- (3124) -- (1324) -- (1234) -- (2134) -- cycle {};
\fill[facet] (1234) -- (1324) -- (1342) -- (1432) -- (1423) -- (1243) -- cycle {};
\fill[facet] (2314) -- (3214) -- (3241) -- (2341) -- cycle {};
\fill[facet] (2431) -- (4231) -- (4213) -- (2413) -- cycle {};
\fill[facet] (2143) -- (1243) -- (1423) -- (4123) -- (4213) -- (2413) -- cycle {};
\fill[facet] (2314) -- (2134) -- (2143) -- (2413) -- (2431) -- (2341) -- cycle {};
\draw[edge] (1432) -- (1342);
\draw[edge] (1432) -- (1423);
\draw[edge] (1342) -- (1324);
\draw[edge] (4123) -- (1423);
\draw[edge] (4123) -- (4213);
\draw[edge] (1423) -- (1243);
\draw[edge] (1324) -- (3124);
\draw[edge] (1324) -- (1234);
\draw[edge] (3124) -- (3214);
\draw[edge] (4213) -- (4231);
\draw[edge] (4213) -- (2413);
\draw[edge] (1243) -- (1234);
\draw[edge] (1243) -- (2143);
\draw[edge] (4231) -- (2431);
\draw[edge] (1234) -- (2134);
\draw[edge] (3241) -- (3214);
\draw[edge] (3241) -- (2341);
\draw[edge] (3214) -- (2314);
\draw[edge] (2413) -- (2143);
\draw[edge] (2413) -- (2431);
\draw[edge] (2143) -- (2134);
\draw[edge] (2431) -- (2341);
\draw[edge] (2134) -- (2314);
\draw[edge] (2341) -- (2314);
\node[vertex] at (1432)     {1432}; 
\node[vertex] at (1342)     {1342}; 
\node[vertex] at (4123)     {4123}; 
\node[vertex] at (1423)     {1423}; 
\node[vertex] at (1324)     {1324}; 
\node[vertex] at (3124)     {3124}; 
\node[vertex] at (4213)     {4213}; 
\node[vertex] at (1243)     {1243}; 
\node[vertex] at (4231)     {4231}; 
\node[vertex] at (1234)     {1234}; 
\node[vertex] at (3241)     {3241}; 
\node[vertex] at (3214)     {3214}; 
\node[vertex] at (2413)     {2413}; 
\node[vertex] at (2143)     {2143}; 
\node[vertex] at (2431)     {2431}; 
\node[vertex] at (2134)     {2134}; 
\node[vertex] at (2341)     {2341}; 
\node[vertex] at (2314)     {2314}; 
\end{tikzpicture} \\
(a) & (b) & (c) \\
\end{tabular}
}
\caption{(a) The braid arrangement (=type~$A$ Coxeter arrangement) for $n=4$ projected to three dimensions with the regions labeled by permutations; (b)~its stereographic projection; (c)~its graph of regions, realized as a polytope, namely the 3-dimensional permutahedron of type~$A$.}
\label{fig:permA4}
\end{figure}

\begin{figure}[t!]
\makebox[0cm]{ 
\begin{tabular}{ccc}
\includegraphics[page=5,scale=0.8]{hyper} &
\includegraphics[page=7,scale=0.8]{hyper} &
\begin{tikzpicture}%
[x={(-0.801330cm, 0.247442cm)},
y={(-0.598223cm, -0.331531cm)},
z={(-0.000051cm, 0.910417cm)},
scale=.5,
back/.style={dotted, thin},
edge/.style={color=blue!95!black, thick},
facet/.style={fill=red!95!black,fill opacity=0.1},
vertex/.style={inner sep=1pt,rectangle,fill=white,thick,scale=.7}]
%
%
\coordinate (213) at (2.41421, 1.00000, 3.82843);
\coordinate (123) at (1.00000, 2.41421, 3.82843);
\coordinate (132) at (1.00000, 3.82843, 2.41421);
\coordinate (21b3) at (2.41421, 1.00000, -3.82843);
\coordinate (12b3) at (1.00000, 2.41421, -3.82843);
\coordinate (1b32) at (1.00000, 3.82843, -2.41421);
\coordinate (312) at (2.41421, 3.82843, 1.00000);
\coordinate (b312) at (2.41421, 3.82843, -1.00000);
\coordinate (1b23) at (1.00000, -2.41421, 3.82843);
\coordinate (13b2) at (1.00000, -3.82843, 2.41421);
\coordinate (b213) at (2.41421, -1.00000, 3.82843);
\coordinate (1b2b3) at (1.00000, -2.41421, -3.82843);
\coordinate (1b3b2) at (1.00000, -3.82843, -2.41421);
\coordinate (b21b3) at (2.41421, -1.00000, -3.82843);
\coordinate (231) at (3.82843, 1.00000, 2.41421);
\coordinate (2b31) at (3.82843, 1.00000, -2.41421);
\coordinate (321) at (3.82843, 2.41421, 1.00000);
\coordinate (b321) at (3.82843, 2.41421, -1.00000);
\coordinate (b231) at (3.82843, -1.00000, 2.41421);
\coordinate (b2b31) at (3.82843, -1.00000, -2.41421);
\coordinate (31b2) at (2.41421, -3.82843, 1.00000);
\coordinate (b31b2) at (2.41421, -3.82843, -1.00000);
\coordinate (3b21) at (3.82843, -2.41421, 1.00000);
\coordinate (b3b21) at (3.82843, -2.41421, -1.00000);
\coordinate (2b13) at (-2.41421, 1.00000, 3.82843);
\coordinate (b123) at (-1.00000, 2.41421, 3.82843);
\coordinate (b132) at (-1.00000, 3.82843, 2.41421);
\coordinate (2b1b3) at (-2.41421, 1.00000, -3.82843);
\coordinate (b12b3) at (-1.00000, 2.41421, -3.82843);
\coordinate (b1b32) at (-1.00000, 3.82843, -2.41421);
\coordinate (3b12) at (-2.41421, 3.82843, 1.00000);
\coordinate (b3b12) at (-2.41421, 3.82843, -1.00000);
\coordinate (b1b23) at (-1.00000, -2.41421, 3.82843);
\coordinate (b13b2) at (-1.00000, -3.82843, 2.41421);
\coordinate (b2b13) at (-2.41421, -1.00000, 3.82843);
\coordinate (b1b2b3) at (-1.00000, -2.41421, -3.82843);
\coordinate (b1b3b2) at (-1.00000, -3.82843, -2.41421);
\coordinate (b2b1b3) at (-2.41421, -1.00000, -3.82843);
\coordinate (23b1) at (-3.82843, 1.00000, 2.41421);
\coordinate (2b3b1) at (-3.82843, 1.00000, -2.41421);
\coordinate (32b1) at (-3.82843, 2.41421, 1.00000);
\coordinate (b32b1) at (-3.82843, 2.41421, -1.00000);
\coordinate (b23b1) at (-3.82843, -1.00000, 2.41421);
\coordinate (b2b3b1) at (-3.82843, -1.00000, -2.41421);
\coordinate (3b1b2) at (-2.41421, -3.82843, 1.00000);
\coordinate (b3b1b2) at (-2.41421, -3.82843, -1.00000);
\coordinate (3b2b1) at (-3.82843, -2.41421, 1.00000);
\coordinate (b3b2b1) at (-3.82843, -2.41421, -1.00000);
\draw[edge,back] (21b3) -- (12b3);
\draw[edge,back] (21b3) -- (b21b3);
\draw[edge,back] (21b3) -- (2b31);
\draw[edge,back] (1b23) -- (13b2);
\draw[edge,back] (13b2) -- (31b2);
\draw[edge,back] (13b2) -- (b13b2);
\draw[edge,back] (b213) -- (b231);
\draw[edge,back] (1b2b3) -- (1b3b2);
\draw[edge,back] (1b2b3) -- (b21b3);
\draw[edge,back] (1b2b3) -- (b1b2b3);
\draw[edge,back] (1b3b2) -- (b31b2);
\draw[edge,back] (1b3b2) -- (b1b3b2);
\draw[edge,back] (b21b3) -- (b2b31);
\draw[edge,back] (231) -- (b231);
\draw[edge,back] (2b31) -- (b321);
\draw[edge,back] (2b31) -- (b2b31);
\draw[edge,back] (b231) -- (3b21);
\draw[edge,back] (b2b31) -- (b3b21);
\draw[edge,back] (31b2) -- (b31b2);
\draw[edge,back] (31b2) -- (3b21);
\draw[edge,back] (b31b2) -- (b3b21);
\draw[edge,back] (3b21) -- (b3b21);
\draw[edge,back] (b1b2b3) -- (b1b3b2);
\draw[edge,back] (b1b2b3) -- (b2b1b3);
\draw[edge,back] (b1b3b2) -- (b3b1b2);
\draw[edge,back] (3b1b2) -- (b3b1b2);
\draw[edge,back] (b3b1b2) -- (b3b2b1);
\node[vertex] at (13b2)     {$13\bar{2}$};
\node[vertex] at (1b2b3)     {$1\bar{2}\bar{3}$};
\node[vertex] at ([xshift=-5pt,yshift=12pt]1b3b2)     {$1\bar{3}\bar{2}$};
\node[vertex] at ([xshift=10pt,yshift=10pt]b1b2b3)     {$\bar{1}\bar{2}\bar{3}$};
\node[vertex] at (b1b3b2)     {$\bar{1}\bar{3}\bar{2}$};
\node[vertex] at ([xshift=10pt,yshift=4pt]21b3)     {$21\bar{3}$};
\node[vertex] at (b21b3)     {$\bar{2}1\bar{3}$};
\node[vertex] at (31b2)     {$31\bar{2}$};
\node[vertex] at (b31b2)     {$\bar{3}1\bar{2}$};
\node[vertex] at (b3b1b2)     {$\bar{3}\bar{1}\bar{2}$};
\node[vertex] at ([xshift=10pt]b231)     {$\bar{2}31$};
\node[vertex] at ([xshift=10pt]2b31)     {$2\bar{3}1$};
\node[vertex] at (b2b31)     {$\bar{2}\bar{3}1$};
\node[vertex] at ([yshift=-9pt]3b21)     {$3\bar{2}1$};
\node[vertex] at ([yshift=10pt]b3b21)     {$\bar{3}\bar{2}1$};
\fill[facet] (b132) -- (132) -- (123) -- (b123) -- cycle {};
\fill[facet] (b1b32) -- (1b32) -- (12b3) -- (b12b3) -- cycle {};
\fill[facet] (b2b13) -- (2b13) -- (b123) -- (123) -- (213) -- (b213) -- (1b23) -- (b1b23) -- cycle {};
\fill[facet] (b3b12) -- (b1b32) -- (1b32) -- (b312) -- (312) -- (132) -- (b132) -- (3b12) -- cycle {};
\fill[facet] (b23b1) -- (b2b13) -- (2b13) -- (23b1) -- cycle {};
\fill[facet] (b2b3b1) -- (b2b1b3) -- (2b1b3) -- (2b3b1) -- cycle {};
\fill[facet] (321) -- (312) -- (132) -- (123) -- (213) -- (231) -- cycle {};
\fill[facet] (b321) -- (b312) -- (312) -- (321) -- cycle {};
\fill[facet] (32b1) -- (3b12) -- (b132) -- (b123) -- (2b13) -- (23b1) -- cycle {};
\fill[facet] (b32b1) -- (b3b12) -- (b1b32) -- (b12b3) -- (2b1b3) -- (2b3b1) -- cycle {};
\fill[facet] (b32b1) -- (b3b12) -- (3b12) -- (32b1) -- cycle {};
\fill[facet] (3b2b1) -- (b23b1) -- (b2b13) -- (b1b23) -- (b13b2) -- (3b1b2) -- cycle {};
\fill[facet] (b3b2b1) -- (b2b3b1) -- (2b3b1) -- (b32b1) -- (32b1) -- (23b1) -- (b23b1) -- (3b2b1) -- cycle {};
\draw[edge] (213) -- (123);
\draw[edge] (213) -- (b213);
\draw[edge] (213) -- (231);
\draw[edge] (123) -- (132);
\draw[edge] (123) -- (b123);
\draw[edge] (132) -- (312);
\draw[edge] (132) -- (b132);
\draw[edge] (12b3) -- (1b32);
\draw[edge] (12b3) -- (b12b3);
\draw[edge] (1b32) -- (b312);
\draw[edge] (1b32) -- (b1b32);
\draw[edge] (312) -- (b312);
\draw[edge] (312) -- (321);
\draw[edge] (b312) -- (b321);
\draw[edge] (1b23) -- (b213);
\draw[edge] (1b23) -- (b1b23);
\draw[edge] (231) -- (321);
\draw[edge] (321) -- (b321);
\draw[edge] (2b13) -- (b123);
\draw[edge] (2b13) -- (b2b13);
\draw[edge] (2b13) -- (23b1);
\draw[edge] (b123) -- (b132);
\draw[edge] (b132) -- (3b12);
\draw[edge] (2b1b3) -- (b12b3);
\draw[edge] (2b1b3) -- (b2b1b3);
\draw[edge] (2b1b3) -- (2b3b1);
\draw[edge] (b12b3) -- (b1b32);
\draw[edge] (b1b32) -- (b3b12);
\draw[edge] (3b12) -- (b3b12);
\draw[edge] (3b12) -- (32b1);
\draw[edge] (b3b12) -- (b32b1);
\draw[edge] (b1b23) -- (b13b2);
\draw[edge] (b1b23) -- (b2b13);
\draw[edge] (b13b2) -- (3b1b2);
\draw[edge] (b2b13) -- (b23b1);
\draw[edge] (b2b1b3) -- (b2b3b1);
\draw[edge] (23b1) -- (32b1);
\draw[edge] (23b1) -- (b23b1);
\draw[edge] (2b3b1) -- (b32b1);
\draw[edge] (2b3b1) -- (b2b3b1);
\draw[edge] (32b1) -- (b32b1);
\draw[edge] (b23b1) -- (3b2b1);
\draw[edge] (b2b3b1) -- (b3b2b1);
\draw[edge] (3b1b2) -- (3b2b1);
\draw[edge] (3b2b1) -- (b3b2b1);
\node[vertex] at ([xshift=-8pt,yshift=7pt]213)     {$213$};
\node[vertex] at (123)     {$123$};
\node[vertex] at (132)     {$132$}; 
\node[vertex] at ([xshift=-3pt,yshift=-3pt]12b3)     {$12\bar{3}$};
\node[vertex] at (1b32)     {$1\bar{3}2$};
\node[vertex] at (312)     {$312$};
\node[vertex] at (b312)     {$\bar{3}12$};
\node[vertex] at (1b23)     {$1\bar{2}3$};
\node[vertex] at (b213)     {$\bar{2}13$};
\node[vertex] at (231)     {$231$};
\node[vertex] at (321)     {$321$};
\node[vertex] at (b321)     {$\bar{3}21$};
\node[vertex] at (2b13)     {$2\bar{1}3$};
\node[vertex] at (b123)     {$\bar{1}23$}; 
\node[vertex] at (b132)     {$\bar{1}32$};
\node[vertex] at (2b1b3)     {$2\bar{1}\bar{3}$};
\node[vertex] at (b12b3)     {$\bar{1}2\bar{3}$};
\node[vertex] at (b1b32)     {$\bar{1}\bar{3}2$};
\node[vertex] at (3b12)     {$3\bar{1}2$};
\node[vertex] at (b3b12)     {$\bar{3}\bar{1}2$};
\node[vertex] at (b1b23)     {$\bar{1}\bar{2}3$};
\node[vertex] at ([xshift=9pt,yshift=9pt]b13b2)     {$\bar{1}3\bar{2}$};
\node[vertex] at (b2b13)     {$\bar{2}\bar{1}3$};
\node[vertex] at ([xshift=9pt,yshift=-9pt]b2b1b3)     {$\bar{2}\bar{1}\bar{3}$};
\node[vertex] at (23b1)     {$23\bar{1}$};
\node[vertex] at (2b3b1)     {$2\bar{3}\bar{1}$};
\node[vertex] at (32b1)     {$32\bar{1}$};
\node[vertex] at (b32b1)     {$\bar{3}2\bar{1}$};
\node[vertex] at (b23b1)     {$\bar{2}3\bar{1}$};
\node[vertex] at (b2b3b1)     {$\bar{2}\bar{3}\bar{1}$};
\node[vertex] at ([xshift=8pt,yshift=8pt]3b1b2)     {$3\bar{1}\bar{2}$};
\node[vertex] at (3b2b1)     {$3\bar{2}\bar{1}$};
\node[vertex] at (b3b2b1)     {$\bar{3}\bar{2}\bar{1}$};
\end{tikzpicture} \\
(a) & (b) & (c) \\
\end{tabular}
}
\caption{(a) The type~$B$ Coxeter arrangement for $n=3$ with the regions labeled by signed permutations (barred entries have a negative sign); (b)~its stereographic projection; (c)~its graph of regions, realized as a polytope, namely the 3-dimensional $B$-permutahedron.}
\label{fig:permB3}
\end{figure}

An interesting special case of this problem is when the arrangement is that of a reflection group.
In that case, it was shown by Conway, Sloane, and Wilks~\cite{MR1032382} that the graph of regions is always Hamiltonian, hence that a Gray code always exists.

We conclude this section with some easy observations about the graph~$G(\cH)$.
First of all, the number of regions~$|\cR(\cH)|$ is always even, i.e., $G(\cH)$ has an even number of vertices.
This follows from the observation that the opposition map $x\mapsto -x$ on~$\R^n$ induces an involution without fixed points on the set of regions~$\cR(\cH)$.
Furthermore, the graph~$G(\cH)$ is bipartite, which can be shown by induction on the number of hyperplanes in~$\cH$.
If the number of hyperplanes in the arrangement~$\cH$ is odd, then the two partition classes of~$G(\cH)$ have the same size (as the opposition map switches classes in this case).
On the other hand, if the number of hyperplanes is even then the two partition classes of~$G(\cH)$ can have different sizes, which rules out the existence of a Hamiltonian cycle, and if the size difference is more than~1 then it also rules out the existence of a Hamiltonian path; see Figure~\ref{fig:nonham}~(a).
Even if the partition classes are balanced, a Hamiltonian path is not guaranteed; see Figure~\ref{fig:nonham}~(b).

\begin{figure}
\includegraphics[page=11]{hyper}
\caption{Stereographic projections of two hyperplane arrangements~$\cH$ in~$\R^3$ for which~$G(\cH)$ does not admit a Hamiltonian path (nor cycle): (a) $\cH$ consists of 4 hyperplanes and the two partition classes of~$G(\cH)$ have sizes~6 (black vertices) and 8 (white vertices); (b) $\cH$ consists of 7 hyperplanes and both partition classes of~$G(\cH)$ have size~22.}
\label{fig:nonham}
\end{figure}

\subsection{Posets of regions and lattice congruences}

By choosing one region~$R_0\in\cR(\cH)$ as \defn{base region}, one can orient the edges of the graph of regions~$G(\cH)$ away from~$R_0$.
Because $G(\cH)$ is bipartite, it becomes the cover graph of a graded poset~$P(\cH,R_0)$, called the \defn{poset of regions}.
For example, if $\cH$ is the aforementioned type~$A$ Coxeter arrangement, and $R_0$ is the identity permutation, then $P(\cH,R_0)$ is the \defn{weak order of type~$A$}, i.e., permutations ordered by their inversion sets; see Figure~\ref{fig:congA}\,(a).
Similarly, for the type~$B$ Coxeter arrangement and $R_0$ the identity permutation, $P(\cH,R_0)$ is the \defn{weak order of type~$B$}, i.e., signed permutations ordered by their inversion sets; see Figure~\ref{fig:congB}\,(a).

In these two cases, $P(\cH,R_0)$ has the additional structure of a \defn{lattice}, i.e., for any two elements~$X,Y\in\cR(\cH)$, there is a unique largest element below~$X$ and~$Y$, called the \defn{meet~$X\meet Y$}, and a unique smallest element above~$X$ and~$Y$, called the \defn{join~$X\join Y$}.
If we have a lattice on a ground set~$P$, a \defn{lattice congruence} is an equivalence relation~$\equiv$ on~$P$ that is compatible with the meet and join operations, i.e., we require that if $X\equiv X'$ and $Y\equiv Y'$ then $X\meet Y\equiv X'\meet Y'$ and $X\join Y\equiv X'\join Y'$.
The \defn{quotient lattice} is the lattice on~$P/{\equiv}$, i.e., on the equivalence classes on~$P$ formed by~$\equiv$, with the order relation inherited from~$P$.
Consequently, the cover graph of~$P/{\equiv}$ is obtained from the cover graph of~$P$ by contracting the vertices in each equivalence class to a single vertex.

\begin{figure}
\begin{tabular}{cc}
\includegraphics[page=2]{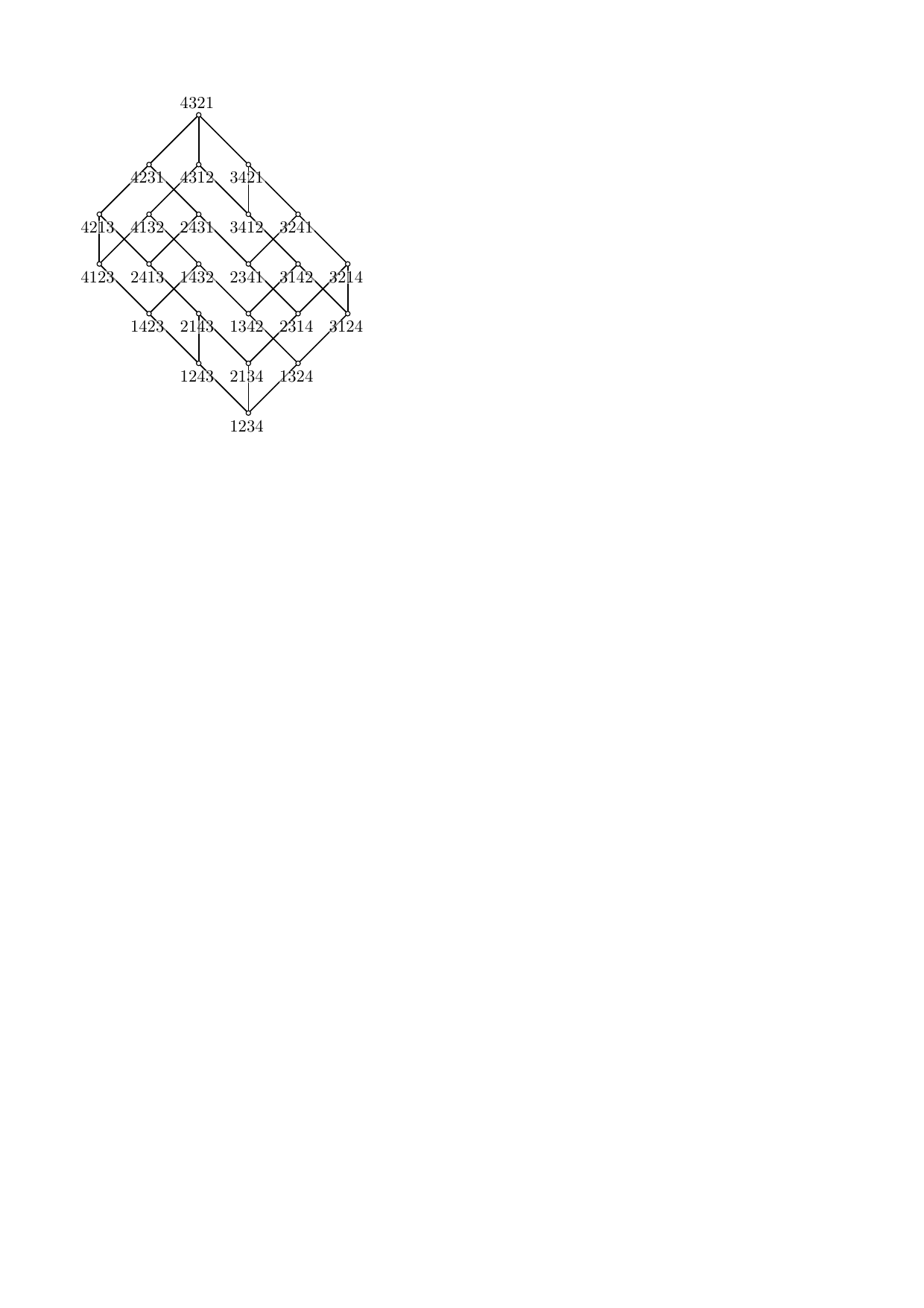} &
\includegraphics[page=3]{congA} \\
(a) & (b)
\end{tabular}
\caption{
(a) The weak order of type~$A$ for $n=4$, obtained by orienting the permutahedron shown in Figure~\ref{fig:permA4}\,(c) away from the base region~$R_0=1234$.
The bold lines connect pairs of permutations that are in the same equivalence class of the sylvester congruence.
It is defined by the rewriting rule $\_b\_ca\_\equiv \_b\_ac\_$ where $a<b<c$, which means that whenever we see a subsequence of three entries $b$, $c$, $a$ with $a<b<c$ in a permutation, where $c$ and $a$ are at consecutive positions, then the permutation obtained by transposing $c$ and~$a$ is in the same equivalence class.
This corresponds to destroying one occurrence of the pattern 231.
Hence the 231-avoiding permutations are representatives of the equivalence classes and sit at the bottom of each class.
(b) Quotient lattice for the congruence on the left, namely the Tamari lattice with $n$-vertex binary trees (blue) or triangulations of a convex $(n+2)$-gon (red) corresponding to the 231-avoiding permutations.}
\label{fig:congA}
\end{figure}

\begin{figure}
\begin{tabular}{cc}
\includegraphics[page=2,scale=0.8]{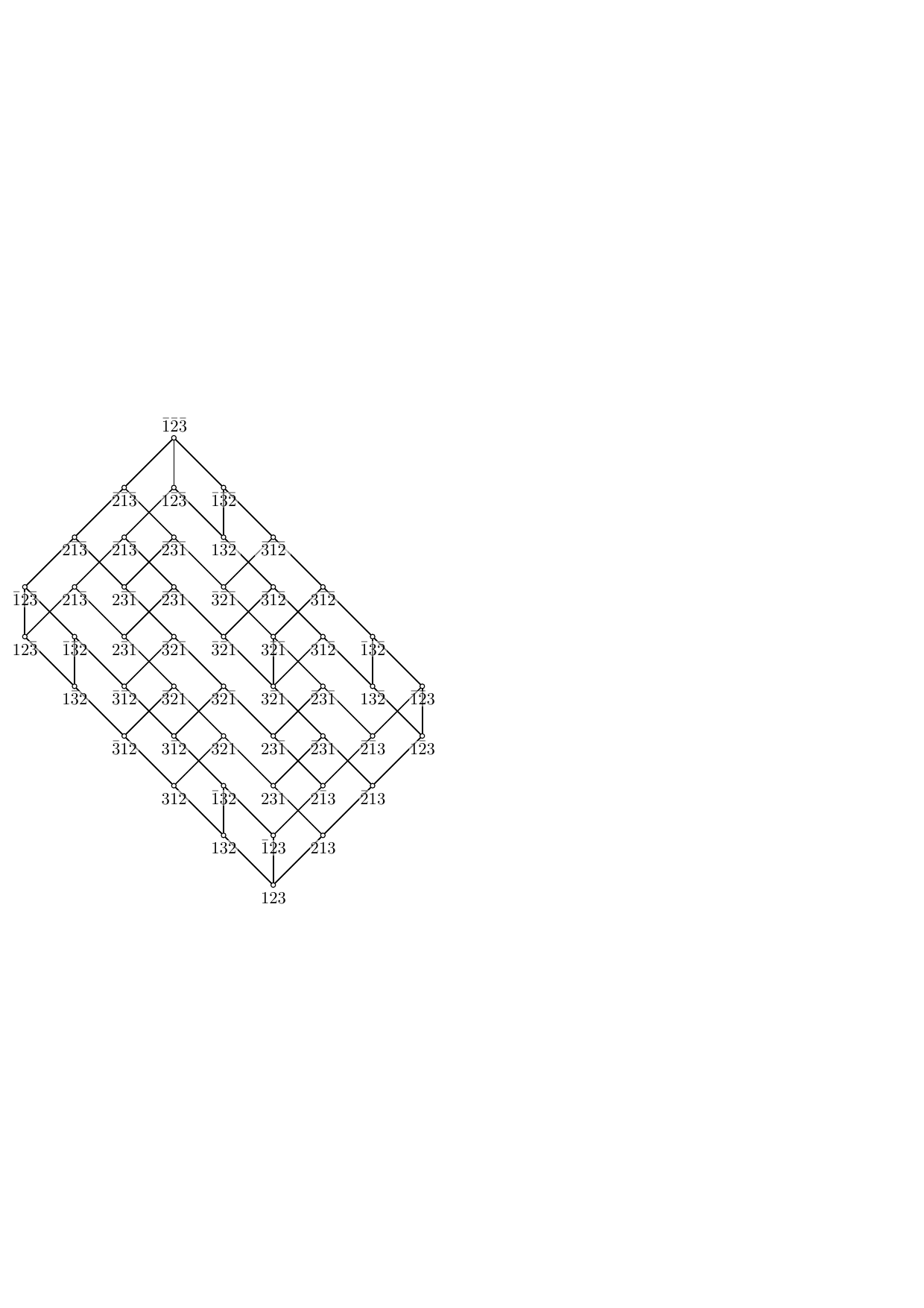} &
\includegraphics[page=3,scale=0.8]{congB} \\
(a) & (b)
\end{tabular}
\caption{(a)~The weak order of type~$B$ for $n=3$, obtained by orienting the $B$-permutahedron shown in Figure~\ref{fig:permB3}\,(c) away from the base region~$R_0=123$.
The bold lines connect pairs of signed permutations that are in the same equivalence class of the congruence defined by the rewriting rule $\_b\_ca\_\equiv \_b\_ac\_$ where $a<b<c$ and $b>0$, which has to be applied to the full notation of each signed permutation $x=(x_1,\ldots,x_n)$, i.e., to the string $(\ol{x_n},\ol{x_{n-1}},\ldots,\ol{x_1},x_1,x_2,\ldots,x_n)$.
The pattern-avoiding signed permutations sit at the bottom of each equivalence class.
(b)~Quotient lattice, namely the type~$B$ Tamari lattice for the congruence on the left, with the point-symmetric triangulations of a convex $(2n+2)$-gon corresponding to the pattern-avoiding signed permutations (the correspondence is given by Theorem~\ref{thm:h-bij}).}
\label{fig:congB}
\end{figure}

For the weak order of type~$A$, there is a large number of distinct lattice congruences, and for several of them, the corresponding congruence classes encode interesting combinatorial objects (under suitable bijections), and the cover graphs of the resulting quotient lattices are interesting flip graphs.
For example, the well-known \defn{Tamari lattice} is the quotient of the weak order of type~$A$ under the so-called sylvester congruence~\cite{MR2142078,MR3235205}.
Its elements can be identified with 231-avoiding permutations, and hence they are in bijection with binary trees and triangulations of a convex polygon, all of which are Catalan families; see Figure~\ref{fig:congA}.
In the cover graph of the quotient lattice, edges are precisely between binary trees that differ in a tree rotation, i.e., we obtain the rotation graph of binary trees~\cite{MR928904,MR3197650}.
In the corresponding triangulations, a tree rotation translates to a flip that removes one edge between two triangles and replaces it by the other diagonal in the resulting empty quadrilateral.

Different families of rectangulations can also be obtained as quotients of the weak order of type~$A$~\cite{MR2914637,MR2871762,MR2864445,MR3878132}, and the flip operations are tree rotations on the dual twin binary trees.
The \defn{Boolean lattice}, with the hypercube as its cover graph, is also a lattice quotient of the weak order of type~$A$.
In fact, all lattice congruences of the weak order form a lattice themselves, with the order relation being refinement of the equivalence classes.

If $\cH$ is the graphic arrangement of a graph~$F$ and $R_0$ a fixed acyclic orientation of~$F$, then $P(\cH,R_0)$ is the reorientation poset of all acyclic orientations of~$F$ with respect to the reference orientation~$R_0$.
Pilaud~\cite{MR4827881} gave a necessary and sufficient condition on~$R_0$ for this poset to be a lattice, and he also described the corresponding lattice congruences and quotients.

Reading~\cite{MR2258260} defined a type~$B$ Tamari lattice via a lattice congruence of the weak order of type~$B$.
In Figure~\ref{fig:congB}, we show the quotient lattice with the corresponding combinatorial objects, namely point-symmetric triangulations of a convex polygon.
Edges in the cover graph are between triangulations that differ in a flip of one or two symmetric edges.

\subsection{Polytopes}
\label{sec:poly}

Finally, the aforementioned flip graphs all arise as the \defn{skeletons} of well-known combinatorial \defn{polytopes}.
The \defn{zonotope} of an arrangement of hyperplanes~$\cH$ is the polytope obtained as the Minkowski sum of line segments in the directions of the normal vectors of the hyperplanes in~$\cH$.
Conversely, the \defn{normal fan} of the zonotope is the fan defined by the arrangement~$\cH$.

\begin{figure}[b!]
\makebox[0cm]{ 
\begin{tabular}{ccc}
\includegraphics[page=2,scale=0.8]{hyper} &
\includegraphics[page=4,scale=0.8]{hyper} &
\input{graphics/type_a_asso_perm} \\
(a) & (b) & (c) \\
\end{tabular}
}
\caption{(a) Quotient fan obtained from the type~$A$ Coxeter fan in Figure~\ref{fig:permA4}\,(a) under the sylvester congruence of Figure~\ref{fig:congA}, and (b) its stereographic projection.
It is the normal fan of the associahedron shown in~(c), obtained by removing certain hyperplanes that bound the permutahedron inside.}
\label{fig:quotientopeA}
\end{figure}

\begin{figure}[b!]
\makebox[0cm]{ 
\begin{tabular}{ccc}
\includegraphics[page=6,scale=0.8]{hyper} &
\includegraphics[page=8,scale=0.8]{hyper} &
\input{graphics/type_b_asso_perm} \\
(a) & (b) & (c) \\
\end{tabular}
}
\caption{(a) Quotient fan obtained from the type~$B$ Coxeter fan in Figure~\ref{fig:permB3}\,(a) under the congruence of Figure~\ref{fig:congB}, and (b) its stereographic projection.
It is the normal fan of the $B$-associahedron shown in~(c), obtained by removing certain hyperplanes that bound the $B$-permutahedron inside.}
\label{fig:quotientopeB}
\end{figure}

The vertices of the zonotope are in bijection with the regions~$\cR(\cH)$ of the arrangement, and its skeleton is the graph of regions~$G(\cH)$.
For example, the hypercube is the zonotope of the coordinate arrangement; see Figure~\ref{fig:cube3}.
Similarly, the \defn{permutahedron} is the zonotope of the braid arrangement (see Figure~\ref{fig:permA4}), and the \defn{$B$-permutahedron} is the zonotope of the type~$B$ Coxeter arrangement (see Figure~\ref{fig:permB3}).
The \defn{graphic zonotope} is the zonotope of the graphic arrangement of a graph.

A lattice congruence of type~$A$ acts on the normal fan of the permutahedron by gluing together regions that belong to the same equivalence class of the congruence.
Pilaud and Santos~\cite{MR3964495} showed that the resulting \defn{quotient fan} is the normal fan of a polytope they called~\defn{quotientope}; see Figure~\ref{fig:quotientopeA}.
Put differently, their result shows that the cover graph of any lattice quotient of the weak order on permutations can be realized geometrically as the skeleton of a polytope.
Quotientopes thus generalize permutahedra, associahedra~\cite{MR2108555}, rectangulotopes~\cite{MR4833064}, hypercubes etc.
Padrol, Pilaud, and Ritter~\cite{MR4584712} later gave a construction of quotientopes via Minkowski sums.
Notably, their approach also works for all lattice congruences on the type~$B$ Coxeter arrangement, i.e., we obtain \defn{type~$B$ quotientopes}, including the \defn{$B$-associahedron}~\cite{MR1979780}; see Figure~\ref{fig:quotientopeB}.
We refer to the recent expository paper of Pilaud, Santos, and Ziegler~\cite{MR4675114} for context and references about associahedra and their generalizations.

Table~\ref{tab:families} gives an overview of the various structures discussed here and their connections.

\begin{table}[h!]
\caption{Correspondence between combinatorial objects encoded by different hyperplane arrangements and their lattice congruences, posets and polytopes.}	
\makebox[0cm]{ 
\footnotesize
\begin{tabular}{llll}
\toprule
Arrangement & Combinatorial objects & Poset & Polytope \\
\midrule
coordinate & binary strings & Boolean lattice & hypercube \\
\midrule
type~$A$ & permutations & weak order of type $A$ & permutahedron of type~$A$ \\
         & binary trees, triangulations & Tamari lattice & associahedron \\
         & (diagonal/generic) rectangulations & lattice of rectangulations & rectangulotopes \cite{MR4833064} \\
         & permutrees & rotation lattice & permutreehedra \cite{MR3856522} \\
graphic  & acyclic orientations of graphs & acyclic reorientation order & graphic zonotope \\
\midrule
type~$B$ & signed permutations & weak order of type~$B$ & permutahedron of type~$B$ \\
         & symmetric triangulations & Tamari lattice of type~$B$ & associahedron of type~$B$ \\
         & acyclic orientations of signed graphs & acyclic reorientation order of type~$B$ & signed graphic zonotope \\
\bottomrule
\end{tabular}
}
\label{tab:families}
\end{table}

\subsection{Our results}

The zigzagging procedure of the Steinhaus-Johnson-Trotter algorithm and its generalizations~\cite{MR4391718} exploit a well-studied property of hyperplane arrangements called \defn{supersolvability}.
This term was first used in the context of group theory, as a property of the lattice of subgroups of a group.
Stanley~\cite{MR0309815} later gave a lattice-theoretic definition, which was then used to qualify hyperplane arrangements.
A convenient definition, in our context, is the following: A hyperplane arrangement~$\cH$ is \defn{supersolvable} if the set~$\cH$ can be partitioned into
two nonempty subsets~$\cH_0$ and~$\cH_1$, where $\cH_0$ is a supersolvable arrangement of lower rank, and for any pair of distinct hyperplanes in~$\cH_1$, their intersection is contained in a hyperplane of $\cH_0$~\cite{MR1036875}.
(We refer to Section~\ref{sec:supersolvable} for complete definitions.)
When a supersolvable arrangement~$\cH$ is split into the two subarrangements~$\cH_0$ and~$\cH_1$, then the regions of the arrangement~$\cH_0$ can be seen as equivalence classes of the regions of~$\cH$, and the regions in each such equivalence class are linearly ordered, i.e., they form a path in the graph of regions~$G(\cH)$.
The zigzagging method alternatingly moves back and forth along those paths, following a Hamiltonian cycle in the graph of regions~$G(\cH_0)$ of the lower-rank arrangement~$\cH_0$ that is built inductively; see Figure~\ref{fig:zigzag}.
As the number of regions~$|\cR(\cH_0)|$ is always even, the last visited region is adjacent to the starting region, i.e., we obtain a Hamiltonian cycle in~$G(\cH)$.

\begin{figure}[h!]
\includegraphics[page=12]{hyper}
\caption{Illustration of the supersolvable partition $\cH=\cH_0\cup\cH_1$ of the Coxeter arrangements of (a) type~$A$ and (b)~type~$B$ from Figures~\ref{fig:permA4} and~\ref{fig:permB3}, respectively (as stereographic projections).
The resulting Hamiltonian cycles in the graph of regions obtained from the zigzagging procedure are shown in black, and they correspond to the listings given in Figures~\ref{fig:permA4cycle} and~\ref{fig:permB3cycle}, respectively.}
\label{fig:zigzag}
\end{figure}

This leads to our first result.\footnote{After submitting this paper, we learned that K{\"o}rber, Schnieders, Stricker, and Walizadeh~\cite{koerber_et_al_2025} found an independent proof of Theorem~\ref{thm:super-ham}.}

\begin{theorem}
\label{thm:super-ham}
Let~$\cH$ be a supersolvable hyperplane arrangement of rank~$n\geq 2$.
Then the graph of regions~$G(\cH)$ has a Hamiltonian cycle of even length.
\end{theorem}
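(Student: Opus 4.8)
The plan is to argue by induction on the rank $n$. The base case $n=2$ is immediate: after quotienting out the lineality space, $\cH$ consists of $k\geq 2$ lines through the origin in the plane, so $G(\cH)$ is a cycle on $2k$ vertices, hence Hamiltonian of even length. For the inductive step, let $n\geq 3$ and fix a supersolvable partition $\cH=\cH_0\cup\cH_1$ as in the definition, with $\rank(\cH_0)=n-1\geq 2$. Using the equivalent formulation of supersolvability via a modular coatom of the intersection lattice, we may assume $\cH_0=\{H\in\cH: X\subseteq H\}$ where $X:=\bigcap_{H\in\cH_0}H$; since $\rank(\cH_0)=n-1$, the flat $X$ is a line (modulo lineality), and I would fix one of its two rays, call it $X^{+}$, the other $X^{-}$. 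The two geometric facts to exploit are that $X$ is the lineality space of $\cH_0$, so $X\subseteq\overline{Q}$ for every region $Q$ of $\cH_0$, while $X\cap H=\{0\}$ for every $H\in\cH_1$, so $X^{+}$ and $X^{-}$ fall strictly on opposite sides of every hyperplane of $\cH_1$.

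Since $\cH_0\subseteq\cH$, every region of $\cH$ lies in a unique region of $\cH_0$; write $\varphi\colon\cR(\cH)\to\cR(\cH_0)$ for this projection. The first step is to show that for every region $Q$ of $\cH_0$ the fibre $\varphi^{-1}(Q)$ induces a path in $G(\cH)$. The regions in $\varphi^{-1}(Q)$ are exactly the connected components of $Q\setminus\bigcup\cH_1$, and the defining property of $\cH_1$ forces the traces $H\cap Q$, for $H\in\cH_1$, to be pairwise disjoint inside the open convex cone $Q$ (their pairwise intersections lie in hyperplanes of $\cH_0$, hence miss $Q$). A family of pairwise disjoint relative hyperplanes inside a convex set is linearly ordered by the nesting of the half-spaces they bound, so it cuts $Q$ into a path of cells $R_0-R_1-\cdots-R_d$, which are the regions of $\cH$ inside $Q$ joined consecutively in $G(\cH)$. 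The two endpoints admit a canonical description: one of them, say $R_0$, is the cell lying on the $X^{-}$-side of every trace and $R_d$ the one on the $X^{+}$-side; equivalently, $R_0$ and $R_d$ are the unique regions of $\cH$ inside $Q$ with $X^{-}\subseteq\overline{R_0}$ and $X^{+}\subseteq\overline{R_d}$ (seen by perturbing a point of $X^{\mp}\setminus\{0\}$ slightly into $Q$). I call these the $X^{-}$-end and the $X^{+}$-end of the fibre; crucially, this labelling of fibre ends is globally consistent, as it depends only on the fixed ray $X^{+}$.

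The second, and central, step is the \emph{splicing lemma}: if $Q$ and $Q'$ are adjacent regions of $\cH_0$, separated by the single hyperplane $H_0\in\cH_0$, then the $X^{+}$-end of $\varphi^{-1}(Q)$ is adjacent in $G(\cH)$ to the $X^{+}$-end of $\varphi^{-1}(Q')$, and likewise for the $X^{-}$-ends. To prove it, let $R\subseteq Q$ and $R'\subseteq Q'$ be the two $X^{+}$-ends, i.e.\ the regions of $\cH$ whose closures contain $X^{+}$. A point obtained by perturbing a point of $X^{+}\setminus\{0\}$ generically lands in $R$ or in $R'$ according to which side of $H_0$ it falls on; since $X^{+}\setminus\{0\}$ lies strictly on one fixed side of every hyperplane of $\cH_1$, so does the perturbed point, whence $R$ and $R'$ lie on the same side of every hyperplane of $\cH_1$ and no such hyperplane separates them. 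As $R$ and $R'$ also agree on every hyperplane of $\cH_0$ other than $H_0$ but disagree on $H_0$, they are separated by exactly $H_0$, hence adjacent in $G(\cH)$. I expect this lemma — more precisely, the verification that the $X^{\pm}$-labelling of fibre ends behaves consistently under adjacency of $\cH_0$-regions — to be the main obstacle; essentially all of the arrangement geometry is concentrated here, and it is exactly where supersolvability is used (once to make the fibres paths, once, via the modular coatom, to orient them consistently).

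It then remains to assemble the cycle. By the induction hypothesis, $G(\cH_0)$ has a Hamiltonian cycle $Q_1,Q_2,\ldots,Q_{2m},Q_1$ of even length $2m=|\cR(\cH_0)|$. I would replace each $Q_i$ by its fibre path and traverse these paths in alternating directions, zigzag fashion: enter $\varphi^{-1}(Q_1)$ at its $X^{-}$-end, run to its $X^{+}$-end, cross along the edge of the splicing lemma (for the pair $Q_1,Q_2$) to the $X^{+}$-end of $\varphi^{-1}(Q_2)$, run to its $X^{-}$-end, cross to the $X^{-}$-end of $\varphi^{-1}(Q_3)$, and so on. Each fibre is traversed in full exactly once, and the splicing lemma supplies every jump; a short parity check shows that $\varphi^{-1}(Q_i)$ is entered at its $X^{-}$-end when $i$ is odd and at its $X^{+}$-end when $i$ is even, so that, using the splicing edge for the pair $Q_{2m},Q_1$ together with the evenness of $2m$, the walk returns to the $X^{-}$-end of $\varphi^{-1}(Q_1)$, where it began. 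This closed walk visits every region of $\cH$ exactly once, i.e.\ it is a Hamiltonian cycle of $G(\cH)$, and its length equals $|\cR(\cH)|$, which is even by the fixed-point-free opposition map recalled in the introduction. The evenness of $2m$ coming from the induction hypothesis is precisely what makes the zigzag close up, which is why the statement is phrased with "of even length".
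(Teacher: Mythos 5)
Your proof is correct and takes essentially the same route as the paper: induction on rank, with the fibers of the projection $\cR(\cH)\to\cR(\cH_0)$ forming paths that are traversed in alternating directions along a Hamiltonian cycle of $G(\cH_0)$ (the paper packages this zigzag step as the notion of an $\ell$-suspended graph in Lemmas~\ref{lem:susp} and~\ref{lem:susp-ham}). The one place you go beyond the paper is your ``splicing lemma'': the paper's Lemma~\ref{lem:GH-structure}(ii) asserts that adjacent fibers are joined end-to-end in a globally consistent way, but its proof only addresses parts (i) and (iii); your geometric argument via the modular coatom line $X$ and the rays $X^\pm$ is exactly the missing verification, and it is correct (given the standard fact, which you invoke, that one may take $\cH_0=\{H\in\cH: X\subseteq H\}$ for a modular coatom $X$).
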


Bj\"{o}rner, Edelman and Ziegler~\cite{MR1036875} showed that the regions of a supersolvable arrangement~$\cH$ can always be ordered into a lattice, for a suitable choice of base region~$R_0\in\cR(\cH)$.
Our second main result is that the cover graph of the quotient of any such lattice always has a Hamiltonian path, which can be found by the zigzagging procedure.

\begin{theorem}
\label{thm:super-quotient}
Let~$\cH$ be a supersolvable hyperplane arrangement, and let $L:=P(\cH,R_0)$ be its lattice of regions for a canonical base region~$R_0$.
Then for any lattice congruence $\equiv$ on~$L$, the cover graph of~$L/{\equiv}$ has a Hamiltonian path.
\end{theorem}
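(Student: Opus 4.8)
The plan is to prove Theorem~\ref{thm:super-quotient} by induction on the rank of~$\cH$, peeling off the subarrangement~$\cH_1$ from a supersolvable decomposition $\cH=\cH_0\cup\cH_1$ and running the zigzagging (Algorithm~J style) construction one level at a time; the base case, where $\cH$ has rank at most one, is immediate.

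First I would record the \emph{fiber structure} of a supersolvable arrangement, which is essentially due to Bj\"orner, Edelman and Ziegler~\cite{MR1036875} and is developed in the form we need in the earlier sections of the paper. Let $\pi\colon\cR(\cH)\to\cR(\cH_0)$ be the map sending a region of~$\cH$ to the region of~$\cH_0$ containing it. The supersolvability condition on~$\cH_1$ forces each fiber $F_Y:=\pi^{-1}(Y)$, with the order induced from $L=P(\cH,R_0)$, to be a chain, realized as a path in~$G(\cH)$ all of whose edges correspond to hyperplanes of~$\cH_1$. When $R_0$ is a canonical base region, $\pi$ is moreover a surjective lattice homomorphism onto $L_0:=P(\cH_0,\pi(R_0))$, each fiber is an interval of~$L$, and $\pi(R_0)$ is again a canonical base region of the lower-rank supersolvable arrangement~$\cH_0$. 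Finally, one needs the compatibility of neighbouring fibers: if $Y,Y'\in\cR(\cH_0)$ are joined by an edge of $G(\cH_0)$ across some $H\in\cH_0$, then the chains $F_Y$ and~$F_{Y'}$ are glued along the edges of~$G(\cH)$ across~$H$ in an order-respecting manner, so that the endpoints of the fibers match up as a zigzag sweep requires.

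The combinatorial engine is an arrangement analogue of Algorithm~J. I would call a subset $\cL\subseteq\cR(\cH)$ a \emph{zigzag set} if for every $Y\in\cR(\cH_0)$ it meets the chain~$F_Y$ in a contiguous subinterval that contains the bottom of~$F_Y$, if its image $\pi(\cL)$ is a zigzag set for~$\cH_0$, and if a recursive closure condition ties these together. The claim to prove by induction is that Algorithm~J lists any zigzag set~$\cL$ as a Hamiltonian path in the flip graph on~$\cL$, in which two regions are adjacent when one is obtained from the other either by sliding across hyperplanes of~$\cH_1$ inside a single fiber or by a single cross-step between neighbouring fibers. In the inductive step, the induction hypothesis lists $\pi(\cL)$ as a Hamiltonian path in the $\cH_0$-flip graph; one then replaces each visited region~$Y$ by a full sweep through $\cL\cap F_Y$, flipping the sweep direction at each step, and checks with the fiber-compatibility above that the last region of one sweep is adjacent to the first region of the next. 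The parity bookkeeping is exactly as in the Steinhaus--Johnson--Trotter case, with the simplification that a Hamiltonian \emph{path}, rather than a cycle, is enough here --- which is precisely the strength of the statement in Theorem~\ref{thm:super-quotient}.

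It remains to feed lattice congruences into this machine. Given a lattice congruence~$\equiv$ on $L=P(\cH,R_0)$, every congruence class is an interval and hence has a least element; let $X$ be the set of these minima, so that $|X|=|L/\equiv|$. The two things to prove are that $X$ is a zigzag set in the above sense, and that the flip graph on~$X$ is isomorphic to the cover graph of~$L/\equiv$ (identifying each class with its minimum); together with the combinatorial engine this produces the desired Hamiltonian path. I expect the main obstacle to be the first of these: showing that the minima of an \emph{arbitrary} lattice congruence are closed as a zigzag set requires controlling how $\equiv$ interacts with the supersolvable fiber structure --- concretely, that $\equiv$ meets each fiber in a bottom-anchored interval, and that over every block of~$L_0$ collapsed by the induced congruence on~$L_0$, the congruence~$\equiv$ already identifies the ``horizontal'' $\cH_0$-direction, leaving only an $\cH_1$-chain. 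This is exactly the point at which the canonical base region hypothesis is indispensable (without it, as Figure~\ref{fig:nonham} shows, even plain Hamiltonicity can fail), and I would prove it using the combinatorics of join-irreducibles and the structure of the congruence lattice of a poset of regions.
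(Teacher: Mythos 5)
Your high-level plan --- induction on rank, peeling off~$\cH_1$ from the supersolvable decomposition, and zigzagging along fibers of $\rho\colon\cR(\cH)\to\cR(\cH_0)$ --- is the right approach and matches the paper's proof in spirit: the paper also reduces to a statement about a lower-rank congruence (via Proposition~\ref{prop:susp-cong}) and lifts a Hamiltonian path by a zigzag sweep (Lemma~\ref{lem:susp-ham}).

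However, the formalization via ``zigzag sets'' of class minima has a concrete gap. You posit that the set $X$ of class minima meets each fiber $F_Y$ in a \emph{contiguous} subinterval containing the bottom of~$F_Y$. This is false already for the sylvester congruence on the weak order of~$S_4$. Take $\cH_0$ the braid arrangement on $\{1,2,3\}$ and $\cH_1 = \{H_{14},H_{24},H_{34}\}$. Over $Y=321$, the fiber is the chain $3214 \lessdot 3241 \lessdot 3421 \lessdot 4321$, and the class minima (the $231$-avoiding permutations) in this fiber are $\{3214, 4321\}$: the bottom and the top of the chain with the two middle elements missing. So $X\cap F_Y$ is not an interval at all, let alone a bottom-anchored one. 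This is precisely what the zigzag-language framework of~\cite{MR4391718} accommodates --- the defining closure property asks only that \emph{both endpoints} of each fiber belong to the language, and Algorithm~J moves the largest value over the omitted middle in a single jump. A bottom-anchored prefix is the wrong condition, and consequently your flip graph on~$X$ (``sliding across hyperplanes of $\cH_1$ inside a single fiber or a single cross-step'') is also not the right object: the cover $[3214]\lessdot[4321]$ in the quotient lattice does not come from a single-hyperplane crossing between the two minima, but from the cover $3421\lessdot 4321$ between non-minimal representatives (Lemma~\ref{lem:cong}~(ii)).

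The paper avoids this entirely by never passing to a system of representatives. Proposition~\ref{prop:susp-cong} works directly with the quotient lattice: it shows that the restriction $\equiv^*$ of~$\equiv$ to the bottom layer is a lattice congruence of the lower-rank lattice, that the projection of each $\equiv$-class is an $\equiv^*$-class, and --- the key point --- that the cover graph of~$L/\equiv$ is itself $\alpha$-suspended over the cover graph of the lower-rank quotient. The ``fibers'' that matter for the zigzag sweep are the preimages of $\equiv^*$-classes inside $L/\equiv$, which are honest chains; intersections of~$X$ with fibers of~$L$ never enter the argument, and no join-irreducible machinery is needed. If you wish to pursue the representative-set route instead, you would have to (i) replace the ``bottom-anchored interval'' condition by ``contains both endpoints of the fiber'' and a recursive closure condition, and (ii) redefine the flip graph on~$X$ using minimal jumps rather than single hyperplane crossings, and then reprove that this flip graph coincides with the cover graph of~$L/\equiv$. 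That is essentially a reconstruction of Proposition~\ref{prop:susp-cong} in different clothing.
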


(We refer to Section~\ref{sec:regions} for the definition of canonical base region.)
The Hamiltonian paths and cycles in those results can be computed by the following simple greedy algorithm based on the supersolvable partition~$\cH=\cH_0\cup\cH_1$:
Start at a canonical base region, and then repeatedly apply the following rule where to move from the currently visited region:
If possible, traverse a hyperplane from~$\cH_1$ that leads to a previously unvisited region, otherwise recursively select a hyperplane to traverse from~$\cH_0$ according to this rule.

\subsection{Applications of our results}

With the two results above, we recover a number of previously known Hamiltonicity results and Gray code algorithms, and we provide several new ones.

\subsubsection{The coordinate arrangement and binary strings}

The coordinate arrangement~$\cH$ with normal vectors~$\{\rvec{e_i}\mid i\in[n]\}$ is clearly supersolvable.
Indeed, for the required partition~$\cH=\cH_0\cup\cH_1$ we can take $\cH_1$ as the singleton hyperplane with normal vector~$\rvec{e_n}$.
The resulting Hamiltonian cycle in the hypercube is the well-known \defn{binary reflected Gray code}; see Figure~\ref{fig:brgc}.

\begin{table}[h!]
\caption{The classical binary reflected Gray code for $n=1,\ldots,4$.}	
\begin{center}
\begin{tabular}{cc}
\toprule
$n$ & \\
\midrule
1 & 0, 1 \\
2 & 0{\red 0}, 0{\red 1}, 1{\red 1}, 1{\red 0} \\
3 & 00{\red 0}, 00{\red 1}, 01{\red 1}, 01{\red 0}, 11{\red 0}, 11{\red 1}, 10{\red 1}, 10{\red 0} \\
4 & 000{\red 0}, 000{\red 1}, 001{\red 1}, 001{\red 0}, 011{\red 0}, 011{\red 1}, 010{\red 1}, 010{\red 0}, \\
& 110{\red 0}, 110{\red 1}, 111{\red 1}, 111{\red 0}, 101{\red 0}, 101{\red 1}, 100{\red 1}, 100{\red 0}\;{} \\
\bottomrule
\end{tabular}
\end{center}
\label{fig:brgc}
\end{table}

It has the following greedy description (\cite{MR3126386}): Start with the all-0 string of length~$n$, and then repeatedly flip the rightmost bit in the last string so as to create a new binary string.

\subsubsection{The type~$A$ Coxeter arrangement and permutations}

Recall that the braid arrangement, or type~$A$ Coxeter arrangement, is defined by the normal vectors $\{\rvec{e_i}-\rvec{e_j}\mid 1\leq i<j\leq n\}$.
The supersolvability of this arrangement~$\cH$ is witnessed by letting $\cH_1$ be the set of hyperplanes with normal vectors whose $n$th component is nonzero.
Indeed, consider two of them, with normal vectors $\rvec{e_i}-\rvec{e_n}$ and $\rvec{e_j}-\rvec{e_n}$, respectively, with $i<j<n$.
Then their intersection is a flat of codimension~2 lying in the hyperplane with normal vector $\rvec{e_i}-\rvec{e_j}$, which belongs to $\cH_0$.
Theorem~\ref{thm:super-ham} yields the aforementioned Steinhaus-Johnson-Trotter listing of permutations (recall Section~\ref{sec:SJT}).

\subsubsection{The type~$B$ Coxeter arrangement and signed permutations}
\label{sec:sperm}

Recall that the type~$B$ Coxeter arrangement is defined by the hyperplanes with normal vectors $\{\rvec{e_i}\pm\rvec{e_j}\mid 1\leq i<j\leq n\}\cup \{\rvec{e_i}\mid i\in [n]\}$.
Similarly to before, the supersolvability of this arrangement~$\cH$ is witnessed by letting $\cH_1$ be the hyperplanes with normal vectors whose $n$th component is nonzero.

As mentioned before, the regions of~$\cH$ are in bijection with signed permutations, i.e., permutations of~$[n]$ in which every entry has a positive or negative sign.
Adjacencies in the graph~$G(\cH)$ are adjacent transpositions or a sign change of the first entry.
The Gray code for signed permutations obtained from Theorem~\ref{thm:super-ham} is shown in Figure~\ref{fig:permB3}, and it is cyclic for all~$n$.
Note that this Gray code is different from the Gray code given by Conway, Sloane, and Wilks~\cite{MR1032382} and also from the one given by Korsh, LaFollette, and Lipschutz~\cite{MR2841338}.

\begin{figure}[b!]
\begin{center}
\begin{tabular}{cc}
\raisebox{-\height}{\begin{tabular}{cc}
\toprule
$n$ & \\ \midrule
1 & $1, \b{1}$ \\
2 & $1{\red 2}, {\red 2}1, \b{{\red 2}}1, 1\b{{\red 2}}, \b{1}\b{{\red 2}}, \b{{\red 2}}\b{1}, {\red 2}\b{1}, \b{1}{\red 2}$ \\
3 & $12{\red 3}, 1{\red 3}2, {\red 3}12, \b{{\red 3}}12, 1\b{{\red 3}}2, 12\b{{\red 3}},$ \\
  & $21\b{{\red 3}}, 2\b{{\red 3}}1, \b{{\red 3}}21, {\red 3}21, 2{\red 3}1, 21{\red 3},$ \\
  & $\b{2}1{\red 3}, \b{2}{\red 3}1, {\red 3}\b{2}1, \b{{\red 3}}\b{2}1, \b{2}\b{{\red 3}}1, \b{2}1\b{{\red 3}},$ \\
  & $1\b{2}\b{{\red 3}}, 1\b{{\red 3}}\b{2}, \b{{\red 3}}1\b{2}, {\red 3}1\b{2}, 1{\red 3}\b{2}, 1\b{2}{\red 3},$ \\
  & $\b{1}\b{2}{\red 3}, \b{1}{\red 3}\b{2}, {\red 3}\b{1}\b{2}, \b{{\red 3}}\b{1}\b{2}, \b{1}\b{{\red 3}}\b{2}, \b{1}\b{2}\b{{\red 3}},$ \\
  & $\b{2}\b{1}\b{{\red 3}}, \b{2}\b{{\red 3}}\b{1}, \b{{\red 3}}\b{2}\b{1}, {\red 3}\b{2}\b{1}, \b{2}{\red 3}\b{1}, \b{2}\b{1}{\red 3},$ \\
  & $2\b{1}{\red 3}, 2{\red 3}\b{1}, {\red 3}2\b{1}, \b{{\red 3}}2\b{1}, 2\b{{\red 3}}\b{1}, 2\b{1}\b{{\red 3}},$ \\
  & $\b{1}2\b{{\red 3}}, \b{1}\b{{\red 3}}2, \b{{\red 3}}\b{1}2, {\red 3}\b{1}2, \b{1}{\red 3}2, \b{1}2{\red 3}\;{}$ \\
\bottomrule
\end{tabular}}
& \hspace{5mm}
\raisebox{-\height}{\begin{tikzpicture}%
[x={(-0.801330cm, 0.247442cm)},
y={(-0.598223cm, -0.331531cm)},
z={(-0.000051cm, 0.910417cm)},
scale=.5,
backr/.style={color=red!30!white, thick},
backb/.style={color=blue!30!white, thick},
edger/.style={color=red!95!black, thick},
edgeb/.style={color=blue!95!black, thick},
facet/.style={fill=red!95!black,fill opacity=0.1},
vertex/.style={inner sep=1pt,rectangle,fill=white,thick,scale=.7}]
%
%
\coordinate (213) at (2.41421, 1.00000, 3.82843);
\coordinate (123) at (1.00000, 2.41421, 3.82843);
\coordinate (132) at (1.00000, 3.82843, 2.41421);
\coordinate (21b3) at (2.41421, 1.00000, -3.82843);
\coordinate (12b3) at (1.00000, 2.41421, -3.82843);
\coordinate (1b32) at (1.00000, 3.82843, -2.41421);
\coordinate (312) at (2.41421, 3.82843, 1.00000);
\coordinate (b312) at (2.41421, 3.82843, -1.00000);
\coordinate (1b23) at (1.00000, -2.41421, 3.82843);
\coordinate (13b2) at (1.00000, -3.82843, 2.41421);
\coordinate (b213) at (2.41421, -1.00000, 3.82843);
\coordinate (1b2b3) at (1.00000, -2.41421, -3.82843);
\coordinate (1b3b2) at (1.00000, -3.82843, -2.41421);
\coordinate (b21b3) at (2.41421, -1.00000, -3.82843);
\coordinate (231) at (3.82843, 1.00000, 2.41421);
\coordinate (2b31) at (3.82843, 1.00000, -2.41421);
\coordinate (321) at (3.82843, 2.41421, 1.00000);
\coordinate (b321) at (3.82843, 2.41421, -1.00000);
\coordinate (b231) at (3.82843, -1.00000, 2.41421);
\coordinate (b2b31) at (3.82843, -1.00000, -2.41421);
\coordinate (31b2) at (2.41421, -3.82843, 1.00000);
\coordinate (b31b2) at (2.41421, -3.82843, -1.00000);
\coordinate (3b21) at (3.82843, -2.41421, 1.00000);
\coordinate (b3b21) at (3.82843, -2.41421, -1.00000);
\coordinate (2b13) at (-2.41421, 1.00000, 3.82843);
\coordinate (b123) at (-1.00000, 2.41421, 3.82843);
\coordinate (b132) at (-1.00000, 3.82843, 2.41421);
\coordinate (2b1b3) at (-2.41421, 1.00000, -3.82843);
\coordinate (b12b3) at (-1.00000, 2.41421, -3.82843);
\coordinate (b1b32) at (-1.00000, 3.82843, -2.41421);
\coordinate (3b12) at (-2.41421, 3.82843, 1.00000);
\coordinate (b3b12) at (-2.41421, 3.82843, -1.00000);
\coordinate (b1b23) at (-1.00000, -2.41421, 3.82843);
\coordinate (b13b2) at (-1.00000, -3.82843, 2.41421);
\coordinate (b2b13) at (-2.41421, -1.00000, 3.82843);
\coordinate (b1b2b3) at (-1.00000, -2.41421, -3.82843);
\coordinate (b1b3b2) at (-1.00000, -3.82843, -2.41421);
\coordinate (b2b1b3) at (-2.41421, -1.00000, -3.82843);
\coordinate (23b1) at (-3.82843, 1.00000, 2.41421);
\coordinate (2b3b1) at (-3.82843, 1.00000, -2.41421);
\coordinate (32b1) at (-3.82843, 2.41421, 1.00000);
\coordinate (b32b1) at (-3.82843, 2.41421, -1.00000);
\coordinate (b23b1) at (-3.82843, -1.00000, 2.41421);
\coordinate (b2b3b1) at (-3.82843, -1.00000, -2.41421);
\coordinate (3b1b2) at (-2.41421, -3.82843, 1.00000);
\coordinate (b3b1b2) at (-2.41421, -3.82843, -1.00000);
\coordinate (3b2b1) at (-3.82843, -2.41421, 1.00000);
\coordinate (b3b2b1) at (-3.82843, -2.41421, -1.00000);
\draw[backb] (21b3) -- (12b3);
\draw[backr] (21b3) -- (2b31);
\draw[backr] (1b23) -- (13b2);
\draw[backr] (13b2) -- (31b2);
\draw[backr] (b213) -- (b231);
\draw[backr] (1b2b3) -- (1b3b2);
\draw[backb] (1b2b3) -- (b21b3);
\draw[backr] (1b3b2) -- (b31b2);
\draw[backr] (b21b3) -- (b2b31);
\draw[backr] (2b31) -- (b321);
\draw[backr] (b231) -- (3b21);
\draw[backr] (b2b31) -- (b3b21);
\draw[backr] (31b2) -- (b31b2);
\draw[backr] (3b21) -- (b3b21);
\draw[backr] (b1b2b3) -- (b1b3b2);
\draw[backb] (b1b2b3) -- (b2b1b3);
\draw[backr] (b1b3b2) -- (b3b1b2);
\draw[backr] (3b1b2) -- (b3b1b2);
\node[vertex] at (13b2)     {$13\bar{2}$};
\node[vertex] at (1b2b3)     {$1\bar{2}\bar{3}$};
\node[vertex] at ([xshift=-5pt,yshift=12pt]1b3b2)     {$1\bar{3}\bar{2}$};
\node[vertex] at ([xshift=10pt,yshift=10pt]b1b2b3)     {$\bar{1}\bar{2}\bar{3}$};
\node[vertex] at (b1b3b2)     {$\bar{1}\bar{3}\bar{2}$};
\node[vertex] at ([xshift=10pt,yshift=4pt]21b3)     {$21\bar{3}$};
\node[vertex] at (b21b3)     {$\bar{2}1\bar{3}$};
\node[vertex] at (31b2)     {$31\bar{2}$};
\node[vertex] at (b31b2)     {$\bar{3}1\bar{2}$};
\node[vertex] at (b3b1b2)     {$\bar{3}\bar{1}\bar{2}$};
\node[vertex] at ([xshift=10pt]b231)     {$\bar{2}31$};
\node[vertex] at ([xshift=10pt]2b31)     {$2\bar{3}1$};
\node[vertex] at (b2b31)     {$\bar{2}\bar{3}1$};
\node[vertex] at ([yshift=-9pt]3b21)     {$3\bar{2}1$};
\node[vertex] at ([yshift=10pt]b3b21)     {$\bar{3}\bar{2}1$};
\fill[facet] (b132) -- (132) -- (123) -- (b123) -- cycle {};
\fill[facet] (b1b32) -- (1b32) -- (12b3) -- (b12b3) -- cycle {};
\fill[facet, fill=blue] (b2b13) -- (2b13) -- (b123) -- (123) -- (213) -- (b213) -- (1b23) -- (b1b23) -- cycle {};
\fill[facet] (b3b12) -- (b1b32) -- (1b32) -- (b312) -- (312) -- (132) -- (b132) -- (3b12) -- cycle {};
\fill[facet] (b23b1) -- (b2b13) -- (2b13) -- (23b1) -- cycle {};
\fill[facet] (b2b3b1) -- (b2b1b3) -- (2b1b3) -- (2b3b1) -- cycle {};
\fill[facet] (321) -- (312) -- (132) -- (123) -- (213) -- (231) -- cycle {};
\fill[facet] (b321) -- (b312) -- (312) -- (321) -- cycle {};
\fill[facet] (32b1) -- (3b12) -- (b132) -- (b123) -- (2b13) -- (23b1) -- cycle {};
\fill[facet] (b32b1) -- (b3b12) -- (b1b32) -- (b12b3) -- (2b1b3) -- (2b3b1) -- cycle {};
\fill[facet] (b32b1) -- (b3b12) -- (3b12) -- (32b1) -- cycle {};
\fill[facet] (3b2b1) -- (b23b1) -- (b2b13) -- (b1b23) -- (b13b2) -- (3b1b2) -- cycle {};
\fill[facet] (b3b2b1) -- (b2b3b1) -- (2b3b1) -- (b32b1) -- (32b1) -- (23b1) -- (b23b1) -- (3b2b1) -- cycle {};
\fill[facet, fill=blue] (b2b1b3) -- (2b1b3) -- (b12b3) -- (12b3) -- (21b3) -- (b21b3) -- (1b2b3) -- (b1b2b3) -- cycle {};
\draw[edgeb] (213) -- (b213);
\draw[edger] (213) -- (231);
\draw[edger] (123) -- (132);
\draw[edgeb] (123) -- (b123);
\draw[edger] (132) -- (312);
\draw[edger] (12b3) -- (1b32);
\draw[edger] (1b32) -- (b312);
\draw[edger] (312) -- (b312);
\draw[edgeb] (1b23) -- (b1b23);
\draw[edger] (231) -- (321);
\draw[edger] (321) -- (b321);
\draw[edgeb] (2b13) -- (b2b13);
\draw[edger] (2b13) -- (23b1);
\draw[edger] (b123) -- (b132);
\draw[edger] (b132) -- (3b12);
\draw[edgeb] (2b1b3) -- (b12b3);
\draw[edger] (2b1b3) -- (2b3b1);
\draw[edger] (b12b3) -- (b1b32);
\draw[edger] (b1b32) -- (b3b12);
\draw[edger] (3b12) -- (b3b12);
\draw[edger] (b1b23) -- (b13b2);
\draw[edger] (b13b2) -- (3b1b2);
\draw[edger] (b2b13) -- (b23b1);
\draw[edger] (b2b1b3) -- (b2b3b1);
\draw[edger] (23b1) -- (32b1);
\draw[edger] (2b3b1) -- (b32b1);
\draw[edger] (32b1) -- (b32b1);
\draw[edger] (b23b1) -- (3b2b1);
\draw[edger] (b2b3b1) -- (b3b2b1);
\draw[edger] (3b2b1) -- (b3b2b1);
\node[vertex] at ([xshift=-8pt,yshift=7pt]213)     {$213$};
\node[vertex] at (123)     {$123$};
\node[vertex] at (132)     {$132$}; 
\node[vertex] at ([xshift=-3pt,yshift=-3pt]12b3)     {$12\bar{3}$};
\node[vertex] at (1b32)     {$1\bar{3}2$};
\node[vertex] at (312)     {$312$};
\node[vertex] at (b312)     {$\bar{3}12$};
\node[vertex] at (1b23)     {$1\bar{2}3$};
\node[vertex] at (b213)     {$\bar{2}13$};
\node[vertex] at (231)     {$231$};
\node[vertex] at (321)     {$321$};
\node[vertex] at (b321)     {$\bar{3}21$};
\node[vertex] at (2b13)     {$2\bar{1}3$};
\node[vertex] at (b123)     {$\bar{1}23$}; 
\node[vertex] at (b132)     {$\bar{1}32$};
\node[vertex] at (2b1b3)     {$2\bar{1}\bar{3}$};
\node[vertex] at (b12b3)     {$\bar{1}2\bar{3}$};
\node[vertex] at (b1b32)     {$\bar{1}\bar{3}2$};
\node[vertex] at (3b12)     {$3\bar{1}2$};
\node[vertex] at (b3b12)     {$\bar{3}\bar{1}2$};
\node[vertex] at (b1b23)     {$\bar{1}\bar{2}3$};
\node[vertex] at ([xshift=9pt,yshift=9pt]b13b2)     {$\bar{1}3\bar{2}$};
\node[vertex] at (b2b13)     {$\bar{2}\bar{1}3$};
\node[vertex] at ([xshift=9pt,yshift=-9pt]b2b1b3)     {$\bar{2}\bar{1}\bar{3}$};
\node[vertex] at (23b1)     {$23\bar{1}$};
\node[vertex] at (2b3b1)     {$2\bar{3}\bar{1}$};
\node[vertex] at (32b1)     {$32\bar{1}$};
\node[vertex] at (b32b1)     {$\bar{3}2\bar{1}$};
\node[vertex] at (b23b1)     {$\bar{2}3\bar{1}$};
\node[vertex] at (b2b3b1)     {$\bar{2}\bar{3}\bar{1}$};
\node[vertex] at ([xshift=8pt,yshift=8pt]3b1b2)     {$3\bar{1}\bar{2}$};
\node[vertex] at (3b2b1)     {$3\bar{2}\bar{1}$};
\node[vertex] at (b3b2b1)     {$\bar{3}\bar{2}\bar{1}$};
\end{tikzpicture}} \\ & \\
(a) & (b)
\end{tabular}
\end{center}
\caption{(a)~Our new cyclic Gray code for signed permutations for $n=1,2,3$; (b)~visualization of the listing for $n=3$ as a Hamiltonian cycle on the $B$-permutahedron.}
\label{fig:permB3cycle}
\end{figure}

For any positively signed element~$i\in[n]$ we write~$\ol{i}:=-i$ for its negatively signed counterpart.
Note that $\ol{\ol{i}}=i$.
We define~$[\ol{n}]:=\{\ol{1},\ol{2},\ldots,\ol{n}\}$, and we denote a signed permutation as a string $\pi=(a_1,\ldots,a_n)$ such that~$a_i\in\pmn$ for $i=1,\ldots,n$ and $\{|a_1|,\ldots,|a_n|\}=[n]$.
It is useful in this context to introduce the so-called \defn{full notation} of a signed permutation~$\pi=(a_1,\ldots,a_n)$, which is the string~$\h{\pi}:=(\ol{a_n},\ol{a_{n-1}},\ldots,\ol{a_1},a_1,a_2,\ldots,a_n)$ of length~$2n$.
Note that an adjacent transposition in~$\pi$ corresponds to a symmetric pair of adjacent transpositions in~$\h{\pi}$.
Furthermore, a sign change of the first entry of~$\pi$ corresponds to an adjacent transposition of the middle two entries of~$\h{\pi}$.
Therefore, our Gray code for signed permutations can be described greedily, using the full notation, as follows:
Start with the identity permutation~$(\ol{n},\ol{n-1},\ldots,\ol{1},1,2,\ldots,n)$, and then repeatedly apply an adjacent transposition\footnote{possibly together with a forced symmetric transposition} to the last permutation in the list that involves the largest possible value so as to create a new permutation.
This algorithm can be implemented looplessly, i.e., in time $\cO(1)$ per generated signed permutation, while using only $\cO(n)$ memory.
We prepared an implementation in C++, available for download and experimentation on the Combinatorial Object Server website~\cite{cos_cperm}.

\subsubsection{Type~$A$ subarrangements and acyclic orientations of graphs}

Recall from Section~\ref{sec:arrangements} that the graphic arrangement~$\cH$ of a graph $F=([n],E)$ is a subarrangement of the braid arrangement, obtained by retaining only the hyperplanes corresponding to the edges of $F$, with normal vectors $\{\rvec{e_i} - \rvec{e_j} \mid \{i,j\}\in E\}$.
It is well-known and not hard to see that the regions of~$\cH$ are in bijection with acyclic orientations of the graph~$F$, and that an adjacency in~$G(\cH)$ corresponds to reversing a single arc in the acyclic orientation of~$F$.
Stanley~\cite[Prop.~2.8]{MR0309815} proved that the graphic arrangement of~$F$ is supersolvable if and only if $F$ is a \defn{chordal graph}, i.e., $F$ has no induced cycle of length~4 or more.
Applying Theorem~\ref{thm:super-ham}, we thus obtain the Gray codes for acyclic orientations of chordal graphs studied in~\cite{MR1267311} and~\cite{MR4614413}.

Via Theorem~\ref{thm:super-quotient} we also recover the Gray codes for quotients of acyclic reorientation lattices of chordal graphs, whose existence was proved by Cardinal, Hoang, Merino, Mi\v{c}ka, and M\"utze~\cite{MR4614413}, addressing a problem raised by Pilaud~\cite[Prob.~52]{MR4827881}.

\subsubsection{Type~$B$ subarrangements and acyclic orientations of signed graphs}

Zaslavsky~\cite{MR0676405} showed that subarrangements of the type~$B$ Coxeter arrangement can be interpreted as the type~$B$ analogue of graphic arrangements but for \defn{signed graphs}, in which every edge carries a sign.
The conditions for supersolvability of this arrangement are known; see~\cite{MR1808091,MR3886267}.
In particular, these conditions identify a class of signed graphs that are the signed analogues of chordal graphs, those having a \defn{signed perfect elimination ordering}.
We refer to Sections~\ref{sec:sao} and \ref{sec:supersolvable} for the definitions.
Theorem~\ref{thm:super-ham} directly yields new Gray codes for acyclic orientations of those signed graphs, and Hamiltonicity of the corresponding polytopes.

\begin{corollary}
For any signed graph that has a signed perfect elimination ordering, the corresponding signed graphic zonotope has a Hamiltonian cycle.
\end{corollary}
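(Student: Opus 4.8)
The plan is to deduce the corollary directly from Theorem~\ref{thm:super-ham}, the only real work being to translate the combinatorial hypothesis into the geometric setting of hyperplane arrangements. First I would recall, following Zaslavsky~\cite{MR0676405}, that a signed graph~$F$ on vertex set~$[n]$ gives rise to a subarrangement $\cH=\cH(F)$ of the type~$B$ Coxeter arrangement: a positive edge~$ij$ contributes the hyperplane with normal vector~$\vec{e}_i-\vec{e}_j$, a negative edge~$ij$ the hyperplane with normal vector~$\vec{e}_i+\vec{e}_j$, and a half-edge (or negative loop) at~$i$ the hyperplane with normal vector~$\vec{e}_i$. By definition the signed graphic zonotope of~$F$ is the zonotope of~$\cH(F)$, so by the discussion in Section~\ref{sec:poly} its skeleton is precisely the graph of regions~$G(\cH(F))$. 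Hence it suffices to exhibit a Hamiltonian cycle in~$G(\cH(F))$.

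Next I would invoke the known characterization of supersolvability for signed graphic arrangements from~\cite{MR1808091,MR3886267}: the arrangement~$\cH(F)$ is supersolvable if and only if~$F$ admits a signed perfect elimination ordering (both notions are spelled out in Sections~\ref{sec:sao} and~\ref{sec:supersolvable}). Thus under the hypothesis of the corollary, $\cH(F)$ is supersolvable, and provided its rank is at least~$2$, Theorem~\ref{thm:super-ham} immediately yields a Hamiltonian cycle of even length in~$G(\cH(F))$, which by the previous paragraph is a Hamiltonian cycle in the skeleton of the signed graphic zonotope. This is the entire argument.

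The only points requiring a brief remark are the degenerate low-rank cases: if $\rank \cH(F)\le 1$ the arrangement has at most two regions and the zonotope is a point or a segment, so one either excludes these cases or notes that the two-vertex case is trivially (degenerately) Hamiltonian. I expect the main obstacle to be purely a matter of bookkeeping rather than substance: one must check that the notion of signed perfect elimination ordering as defined in Section~\ref{sec:sao} is literally the condition appearing in~\cite{MR1808091,MR3886267}, and that loops and half-edges are treated consistently on the graph side and on the arrangement side. No new mathematics is needed beyond combining Theorem~\ref{thm:super-ham} with these cited facts.
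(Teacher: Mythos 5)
Your proposal is correct and takes essentially the same route as the paper: combine Theorem~\ref{thm:ssupergraphic} (a signed perfect elimination ordering implies $\cH(F)$ is supersolvable) with Theorem~\ref{thm:super-ham} to get a Hamiltonian cycle in $G(\cH(F))$, which is the skeleton of the signed graphic zonotope. One minor overstatement: the paper states and needs only the forward implication of Theorem~\ref{thm:ssupergraphic}, not an ``if and only if'' (the paper explicitly remarks that Zaslavsky's full characterization involves additional graphs), though this has no bearing on the validity of the argument.
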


From Theorem~\ref{thm:super-quotient} we also directly obtain Gray codes for quotients of acyclic reorientation lattices of signed graphs, generalizing the result of~\cite{MR4614413}.

\subsubsection{Hamiltonian paths on type~$A$ quotientopes}

Recall from Section~\ref{sec:poly} that the quotientopes introduced by Pilaud and Santos are the polytopes whose skeletons are the cover graphs of quotients of lattice congruences of the weak order of type~$A$.
Hoang and M\"utze~\cite{MR4344032} proved that all quotientopes admit a Hamiltonian path, a result that can be recovered as a special case from our Theorem~\ref{thm:super-quotient}.
In particular, we recover a Hamiltonian path on the associahedron (via the sylvester congruence), which coincides with the Gray codes for binary trees by tree rotations due to Lucas, Roelants van Baronaigien, and Ruskey~\cite{MR1239499}.
Furthermore, we recover Hamiltonian cycles on rectangulotopes, which coincide with the cyclic Gray codes for diagonal and generic rectangulations due to Merino and M\"utze~\cite{MR4598046}.

\subsubsection{Hamiltonian paths on type~$B$ quotientopes}

As mentioned before, type~$B$ quotientopes were defined by Padrol, Pilaud, and Ritter~\cite{MR4584712} as the polytopes whose skeletons are the cover graphs of quotients of lattice congruences of the weak order of type~$B$.
Applying Theorem~\ref{thm:super-quotient} to the type~$B$ Coxeter arrangement, we thus obtain the following new result.

\begin{corollary}
\label{cor:quotientopeB}
The skeleton of any type~$B$ quotientope has a Hamiltonian path.
\end{corollary}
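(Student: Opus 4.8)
The plan is to derive Corollary~\ref{cor:quotientopeB} as an immediate consequence of Theorem~\ref{thm:super-quotient} applied to the type~$B$ Coxeter arrangement. Recall from Section~\ref{sec:poly} that, by the construction of Padrol, Pilaud, and Ritter~\cite{MR4584712}, the skeleton of a type~$B$ quotientope is precisely the cover graph of a quotient lattice $L/\equiv$, where $L$ is the weak order of type~$B$ on the signed permutations of $[n]$ and $\equiv$ is an arbitrary lattice congruence of $L$. Thus it suffices to show that, for every such $\equiv$, the cover graph of $L/\equiv$ admits a Hamiltonian path. Since the weak order of type~$B$ is by definition the poset of regions $P(\cH,R_0)$, where $\cH$ is the type~$B$ Coxeter arrangement with normal vectors $\{\vec{e}_i\pm\vec{e}_j\mid 1\le i<j\le n\}\cup\{\vec{e}_i\mid i\in[n]\}$ and $R_0$ is the region of the identity signed permutation, everything reduces to checking that $\cH$ and $R_0$ satisfy the hypotheses of Theorem~\ref{thm:super-quotient}.

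For supersolvability I would reuse the partition already exhibited for the type~$B$ Coxeter arrangement: take $\cH_1$ to be the set of hyperplanes whose normal vector has nonzero $n$th coordinate, i.e., those with normals $\vec{e}_i\pm\vec{e}_n$ for $i<n$ together with $\vec{e}_n$, and let $\cH_0$ be the type~$B$ Coxeter arrangement on the first $n-1$ coordinates, which is supersolvable by induction on the rank. The modularity condition is verified by the same short computation as in the type~$A$ case: the intersection of $(\vec{e}_i-\vec{e}_n)^\perp$ with $(\vec{e}_j\mp\vec{e}_n)^\perp$ lies in $(\vec{e}_i\pm\vec{e}_j)^\perp\in\cH_0$ when $i\neq j$ and in $\vec{e}_i^\perp\in\cH_0$ when $i=j$, while the intersection of $\vec{e}_n^\perp$ with any $(\vec{e}_i\pm\vec{e}_n)^\perp$ lies in $\vec{e}_i^\perp\in\cH_0$. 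A symmetric computation handles the remaining pairs in $\cH_1$.

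The one point that needs genuine care --- and the step I expect to be the main obstacle --- is verifying that the base region $R_0$ of the identity signed permutation is a \emph{canonical} base region in the sense of Section~\ref{sec:regions}, relative to the modular flag induced by the partition above. Concretely, one must match the combinatorial convention under which the type~$B$ weak order (hence the family of type~$B$ quotientopes) is defined --- orienting the $B$-permutahedron away from the identity --- with the geometric convention under which Theorem~\ref{thm:super-quotient} is stated. For a Coxeter arrangement this should reduce to the standard fact that the fundamental chamber, read off in the coordinate order that the supersolvable partition picks out, is a canonical base region; once this identification is recorded, Theorem~\ref{thm:super-quotient} applies verbatim to $L=P(\cH,R_0)$ and yields a Hamiltonian path in the cover graph of $L/\equiv$ for every lattice congruence $\equiv$, which is exactly the assertion of the corollary. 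Finally, I would remark that, in contrast with Theorem~\ref{thm:super-ham}, one cannot upgrade this to a Hamiltonian cycle in general: passing to a quotient can make the two color classes of the (still bipartite) cover graph unbalanced --- this already happens for the type~$A$ associahedron for suitable $n$ --- so a Hamiltonian path is the best one can hope for here.
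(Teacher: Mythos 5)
Your proof is correct and follows essentially the same approach as the paper: the paper states this corollary as an immediate application of Theorem~\ref{thm:super-quotient}, after having already recorded (in the discussion of signed permutations) that the type~$B$ Coxeter arrangement is supersolvable via the partition you describe, and relying on the fact that the identity chamber is a canonical base region. Your verification of the modularity condition for $\cH_1$ and your discussion of the canonical base region are filled in correctly and match what the paper leaves implicit.

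One caveat, though it does not affect the proof of the corollary itself: the closing remark is factually wrong on two counts. The cover graph of a quotient lattice $L/\equiv$ is in general \emph{not} bipartite (the weak order is graded, but its quotients need not be; the associahedron skeleton, for instance, contains pentagonal faces and hence odd cycles). And the type~$A$ associahedron in fact \emph{does} admit a Hamiltonian cycle for every $n\ge 3$, by the result of Lucas, Roelants van Baronaigien, and Ruskey~\cite{MR1239499} cited in the paper. So the reason Theorem~\ref{thm:super-quotient} only guarantees a Hamiltonian path rather than a cycle is not an unbalanced bipartition of the quotient; it is rather that the inductive zigzag construction in the quotient setting does not control the parity needed to close the path into a cycle. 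I would simply drop that remark.
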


In particular, we obtain a Hamiltonian cycle on the $B$-associahedron~\cite{MR1979780}.
Via a suitable bijection, this gives a new cyclic Gray code for point-symmetric triangulations of a convex $(2n+2)$-gon, in which any two consecutive triangulations differ in one flip of the edge through the center or a pair of symmetric flips not involving the center-edge; see Figures~\ref{fig:triang} and~\ref{fig:gc-triang} (see Theorems~\ref{thm:h-bij} and~\ref{thm:2b31} below).

\begin{figure}[h!]
\includegraphics[page=1,scale=0.5]{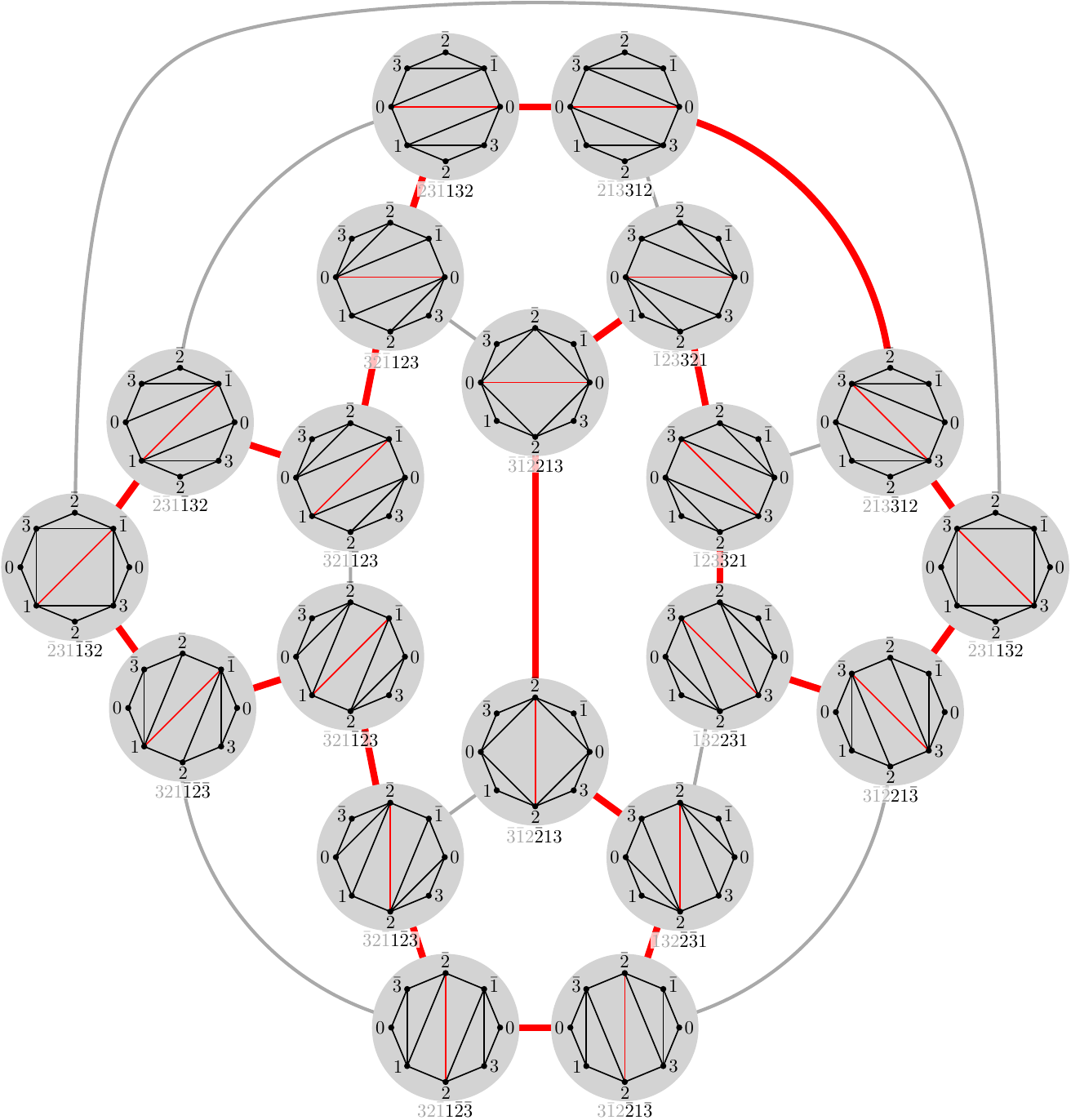}
\caption{The skeleton of the $B$-associahedron, i.e., the cover graph of the lattice quotient shown in Figure~\ref{fig:congB}, with the pattern-avoiding permutations in full notation.
The Hamiltonian cycle corresponding to the Gray code in Figure~\ref{fig:gc-triang} for $n=3$ is highlighted.}
\label{fig:triang}
\end{figure}

\begin{figure}
\makebox[0cm]{ 
\begin{tabular}{c}
\includegraphics[page=2,scale=0.5]{triang} \\
\includegraphics[page=3,scale=0.5]{triang}
\end{tabular}
}
\caption{Our new cyclic Gray code for point-symmetric triangulations of a convex $(2n+2)$-gon for $n=1,2,3,4$, given by Theorems~\ref{thm:h-bij} and~\ref{thm:2b31}.
Below each triangulation is the corresponding pattern-avoiding signed permutation.
The vertical bars separate groups of descendants that are derived from the same parent permutation/triangulation in the previous listing.}
\label{fig:gc-triang}
\end{figure}

It is not known whether every cover graph of a quotient of the lattice of regions of a supersolvable hyperplane arrangement can be realized as the skeleton of a polytope.
If so, then its skeleton admits a Hamiltonian path according to Theorem~\ref{thm:super-quotient}.
In fact, Padrol, Pilaud, and Ritter~\cite[Conj.~143]{MR4584712} conjecture that a polytopal realization is possible if the lattice of regions is congruence uniform.

\subsubsection{Zigzag languages of signed permutations}

As another contribution, we develop a theory of zigzag languages of signed permutations, and obtain the type~$B$ analogue of the methods described in Section~\ref{sec:zigzag} (see Theorem~\ref{thm:zigzag} below).
This yields Gray codes for generating several different pattern-avoiding signed permutations and the corresponding combinatorial objects in bijection to them.

\subsection{Outline of the paper}

In Section~\ref{sec:background} we set up additional terminology and notation needed for the proofs of our two main theorems.
The proofs of those theorems are presented in Section~\ref{sec:proofs}.
In Section~\ref{sec:zigzag-signed} we develop the theory of zigzag languages of signed permutations.
In Section~\ref{sec:except}, we describe three exceptional signed permutation patterns and establish bijections to other combinatorial objects.
In particular, we describe our new Gray code for point-symmetric triangulations of a convex polygon, and the corresponding Hamiltonian cycle on the $B$-associahedron.

\section{Background and terminology}
\label{sec:background}

\subsection{Partial orders and lattices}
\label{sec:poset}

A \defn{poset} $(P,\leq)$ is a set~$P$ together with a partial order~$\leq$ on~$P$.
We write~$<$ for the corresponding strict partial order.
An element~$x\in P$ is \defn{minimal} if there is no~$y\in P$ with $y<x$.
Similarly, $x$ is \defn{maximal} if there is no~$y\in P$ with $x<y$.
A \defn{cover} is a pair~$(x,y)$ of elements of~$P$ such that $x<y$ and there does not exist~$z$ such that $x<z<y$.
We write $x\lessdot y$ if $(x,y)$ is a cover.
The cover relation~$\lessdot$ is the reflexive, transitive reduction of the order relation~$\leq$.
Conversely, the order relation $\leq$ is the reflexive, transitive closure of the cover relation~$\lessdot$.
The \defn{cover graph of~$P$} is the (undirected) graph that has the elements of~$P$ as vertices, and as edges exactly the covers.
A poset~$P$ is often visualized by a \defn{Hasse diagram}, which is a drawing of the cover graph in the plane in which for every cover~$(x,y)$, the vertex~$y$ is embedded higher than~$x$.
A \defn{chain} in~$P$ is a sequence of covers $x_0 \lessdot x_1 \lessdot\cdots \lessdot x_\ell$.

The \defn{downset} and \defn{upset} of an element~$x\in P$ are defined as $x^\downarrow:=\{y\mid y\leq x\}$ and~$x^\uparrow:=\{y\mid x\leq y\}$.
The \defn{interval} between two elements $x,y\in P$ is the subposet~$[x,y]:=x^\uparrow\cap y^\downarrow=\{z \mid  x\leq z\leq y\}$.
A poset~$P$ is \defn{graded} if there is a \defn{rank function}~$\rank(x)$ mapping elements of~$P$ to integers such that $\rank(x)<\rank(y)$ if~$x<y$ and $\rank(y)=\rank(x)+1$ if~$x\lessdot y$.

Given two elements $x,y\in P$, their \defn{meet} $x\meet y$ is the maximum of~$x^\downarrow\cap y^\downarrow$, if it exists and is unique.
Dually, their \defn{join} $x\join y$ is the minimum of~$x^\uparrow\cap y^\uparrow$, if it exists and is unique.
The poset~$P$ is called a \defn{lattice} if meets and joins exist for any pair of elements.
It follows directly from this definition that any lattice has a unique minimum and maximum.

\subsection{Lattice congruences}
\label{sec:congr}

Let $(L,\leq)$ be a lattice.
For an equivalence relation~$\equiv$ on~$L$ and any~$x\in L$, we write $[x]:=\{y\in L\mid x\equiv y\}$ for the equivalence class of~$x$, and we write~$L/{\equiv}:=\{[x]\mid x\in L\}$ for the set of all equivalence classes.

A \defn{lattice congruence} is an equivalence relation~$\equiv$ on~$L$ such that for all $x,x',y,y'\in L$, if $x\equiv x'$ and $y\equiv y'$, then $x\meet y\equiv x'\meet y'$ and $x\join y\equiv x'\join y'$.
The set of equivalence classes $L/{\equiv}$ can be ordered by defining $X\leq Y$ for $X,Y\in L/{\equiv}$ if there are elements~$x\in X$ and~$y\in Y$ with $x\leq y$.
Furthermore, this order on~$L/{\equiv}$ is a lattice, namely if $[x]$ and $[y]$ are equivalence classes, then their meet and join are $[x\meet y]$ and $[x\join y]$, respectively.
We refer to $L/{\equiv}$ with this lattice structure as the \defn{quotient lattice}.

We recall some fundamental results about lattice congruences; see \cite[\S 9-5]{MR3645055} for proofs.

\begin{lemma}
\label{lem:cong}
Let $\equiv$ be an equivalence relation on a finite lattice~$L$.
\begin{enumerate}[label=(\roman*),leftmargin=8mm]
\item Every equivalence class is an interval.
\item We have $X\lessdot Y$ in $L/{\equiv}$ if and only if there are elements~$x\in X$ and $y\in Y$ such that $x\lessdot y$ in~$L$.
\end{enumerate}
\end{lemma}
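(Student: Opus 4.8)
The plan is to prove (i) first and then (ii); throughout, the essential input is that $\equiv$ is a lattice congruence (not merely an equivalence relation), so that the projection $L\to L/\equiv$ respects $\meet$ and $\join$.

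For (i), fix a class $C=[x]$ and propose that $C=[a,b]$ with $a:=\bigMeet C$ and $b:=\bigJoin C$, both of which exist since $L$ is finite. The first step is that $a,b\in C$: using the congruence property, $u\equiv v$ implies $u\meet v\equiv u\meet u=u$, and iterating this over the finitely many elements of $C$ yields $a\equiv x$, and dually $b\equiv x$. The inclusion $C\seq[a,b]$ is immediate from the definitions of $\bigMeet$ and $\bigJoin$. For the reverse inclusion, take $z\in[a,b]$; then $a\meet z=a$ and $b\meet z=z$, so from $a\equiv b$ the congruence property gives $a=a\meet z\equiv b\meet z=z$, hence $z\in[a]=C$.

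The heart of the argument is the backward direction of (ii): given $x\lessdot y$ in $L$ with $x\in X$, $y\in Y$ and $X\neq Y$, we must show $X\lessdot Y$ in $L/\equiv$. From $x\le y$ we get $X\le Y$, and $X\neq Y$ by hypothesis. Suppose for contradiction that $X<Z<Y$ for some class $Z$, and pick any $z_0\in Z$. The key step is to build two witnesses in $L$ landing in $Z$: since the projection is a lattice homomorphism, $z':=z_0\meet y$ satisfies $[z']=Z\meet Y=Z$ (using $Z\le Y$), so $z'\in Z$ and $z'\le y$; and then $x\join z'$ satisfies $[x\join z']=X\join Z=Z$ (using $X\le Z$), so $x\join z'\in Z$. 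Now $x\le x\join z'\le y$ because $x\le y$ and $z'\le y$, so by $x\lessdot y$ either $x\join z'=x$ or $x\join z'=y$. In the first case $z'\le x$, hence $Z=[z']\le[x]=X$, contradicting $X<Z$; in the second case $y=x\join z'\in Z$, hence $Y=[y]=Z$, contradicting $Z<Y$. Either way we reach a contradiction, so no intermediate class $Z$ exists and $X\lessdot Y$.

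The forward direction of (ii) is easier: assuming $X\lessdot Y$, pick $x_0\in X$ and $y_0\in Y$ with $x_0\le y_0$ (possible since $X\le Y$); in fact $x_0<y_0$, since $x_0=y_0$ would force $X=Y$. Refine $x_0\le y_0$ to a saturated chain $x_0=w_0\lessdot w_1\lessdot\cdots\lessdot w_m=y_0$ in $L$ (which exists as $L$ is finite), and push it forward to a weakly increasing chain $X=[w_0]\le[w_1]\le\cdots\le[w_m]=Y$. Since $X\lessdot Y$, every $[w_i]$ equals $X$ or $Y$, so there is an index $i$ with $[w_i]=X$ and $[w_{i+1}]=Y$; then $x:=w_i$ and $y:=w_{i+1}$ are the required elements. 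I expect the backward direction of (ii) to be the only genuine obstacle: part (i) and the forward direction are routine, whereas there one has to produce the correct element of $L$ to probe the cover $x\lessdot y$, and the choice $x\join(z_0\meet y)$---which sits in the intermediate class $Z$ by the homomorphism property while remaining inside $[x,y]$---is exactly what makes the sandwich argument work.
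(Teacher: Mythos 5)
The paper does not prove this lemma; it simply cites Reading's book for the proofs (``We recall some fundamental results about lattice congruences; see \cite[\S 9-5]{MR3645055} for proofs.''), so there is no argument in the paper to compare against. Note also that the lemma as printed says ``equivalence relation,'' which is clearly a slip for ``lattice congruence'' (the statement is false for arbitrary equivalence relations); you have implicitly and correctly made that repair, and your proof does genuinely use the congruence property exactly where it must.

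Your argument is correct throughout. For (i), closure of a class under binary meets and joins (from $u\equiv v$, $u\equiv u$ one gets $u\equiv u\meet v$) together with finiteness gives $a:=\bigMeet C\in C$ and $b:=\bigJoin C\in C$, and the interval step $a=a\meet z\equiv b\meet z=z$ is exactly right. For the backward direction of (ii), the crucial move of probing the cover $x\lessdot y$ with the element $x\join(z_0\meet y)$, which lands in the hypothetical intermediate class $Z$ by the homomorphism $[u\meet v]=[u]\meet[v]$, $[u\join v]=[u]\join[v]$, while staying in the interval $[x,y]$, is clean and is the standard way to do it. The forward direction via refining a chain from $x_0$ to $y_0$ and using that $X\lessdot Y$ forces each intermediate class to be $X$ or $Y$ is also correct. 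The reference \cite[\S 9-5]{MR3645055} develops this material using the order-preserving projections $\pi_\downarrow$ and $\pi^\uparrow$ onto the bottom and top of each class; your treatment is a direct and slightly more self-contained version of the same ideas, and nothing is missing.
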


\subsection{Hyperplane arrangements}

We introduce the required terminology regarding hyperplane arrangements; for more details see~\cite{MR1217488} and~\cite{MR2383131}.
A \defn{hyperplane arrangement} in $\R^n$ is a nonempty finite set~$\cH$ of hyperplanes through the origin.
The \defn{rank} of an arrangement~$\cH$ is the dimension of the space spanned by the normal vectors of the hyperplanes, or equivalently, the codimension of the intersection of all the hyperplanes.

The \defn{regions} $\cR=\cR(\cH)$ of an arrangement~$\cH$ are the connected components of $\R^n\setminus\cH$.
The \defn{graph of regions} $G(\cH)$ is the adjacency graph of the set of regions, i.e., it has $\cR(\cH)$ as its vertex set, and an edge between any two regions that are separated by exactly one hyperplane from~$\cH$.

A \defn{flat} of an arrangement~$\cH$ in~$\R^n$ is a subspace of~$\R^n$ obtained by intersecting some subset of hyperplanes of~$\cH$.
The \defn{intersection lattice} $\cL=\cL(\cH)$ of the arrangement~$\cH$ is the set of flats ordered in reverse inclusion order, completed by a minimal element $\h{0} = \R^n$.
The intersection lattice is a geometric lattice, namely the matroid lattice of the matroid defined by the arrangements of normal vectors of the hyperplanes of~$\cH$.
Its \defn{atoms} are the hyperplanes, and the rank~$\rank(X)$ of an element $X\in \cL$ is equal to its \defn{codimension} $\codim (X) = n - \dim(X)$, so that $\rank (\h{0}) = 0$ and $\rank (H) = 1$ for all $H\in\cH$.
Its maximum is given by~$\bigcap_{H\in \cH} H$, and its rank in the lattice is precisely the rank of the arrangement~$\cH$.
The meet of two elements $X, Y\in \cL$ is equal to $X\meet Y=\bigcap \{Z\in \cL \mid Z\supseteq (X\cup Y)\}$, and the join is $X\join Y=X\cap Y$.

\subsection{Zonotopes}

With every hyperplane arrangement~$\cH$, one can associate the \defn{zonotope} $Z(\cH)$ defined as the Minkowski sum of line segments in the directions of the normal vectors of the hyperplanes in~$\cH$.
The graph of regions~$G(\cH)$ is isomorphic to the skeleton of~$Z(\cH)$.

\subsection{Coxeter arrangements}

Interesting hyperplane arrangements are obtained by letting the normal vectors form a root system of a finite Coxeter group.
The standard type~$A$, $B$, and~$D$ root systems (the type~$C$ is essentially equivalent to the type $B$ for our application) consist respectively of the following vectors; see~\cite{MR2133266} and~\cite{MR2383126}:
\begin{description}
\item[type $A$] $\{ \rvec{e_i} - \rvec{e_j} \mid i\neq j\in [n]\}$,
\item[type $B$] $\{ \rvec{e_i} \pm \rvec{e_j} \mid i\neq j\in [n]\}\cup\{ \pm\rvec{e_i} \mid i\in [n]\}$,
\item[type $D$] $\{ \rvec{e_i} \pm \rvec{e_j} \mid i\neq j\in [n]\}$.
\end{description}

\subsection{Graphic arrangements}

Given a simple graph~$F=([n],E)$, the \defn{graphic arrangement} $\cH(F)$ of~$F$ is the arrangement of hyperplanes with the normal vectors~$\{\rvec{e_i} - \rvec{e_j} \mid \{i,j\}\in E\}$.
The graphic arrangement is therefore a subarrangement of the type~$A$ Coxeter arrangement.
These normal vectors form a realization of the \defn{graphic matroid} of~$F$, and the intersection lattice of $\cH(F)$ is the lattice of flats of this matroid.
An \defn{acyclic orientation} of~$G$ is an assignment of a direction to each edge in~$E$ that does not create any directed cycles.
The following is a well-known observation from Greene (see~\cite{MR712251}).

\begin{lemma}
\label{lem:gaoa}
The regions of the graphic arrangement~$\cH(F)$ of a graph~$F$ are in bijection with the acyclic orientations of~$F$.
\end{lemma}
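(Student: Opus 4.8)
The plan is to build the bijection explicitly. For an edge $ij\in E$ let $H_{ij}=\{x\in\R^n : x_i=x_j\}$ be the corresponding hyperplane, so that $\cH(F)=\{H_{ij} : ij\in E\}$. Given a region $R$ and a point $x\in R$, the number $x_i-x_j$ is nonzero, and since $R$ is connected and disjoint from $H_{ij}$, its sign does not depend on the choice of $x\in R$. I would use these signs to define an orientation $\omega_R$ of $F$, directing the edge $ij$ from $i$ to $j$ precisely when $x_i>x_j$ for $x\in R$. The first step is to check that $\omega_R$ is acyclic: a directed cycle $i_1\to i_2\to\dots\to i_k\to i_1$ would force $x_{i_1}>x_{i_2}>\dots>x_{i_k}>x_{i_1}$, which is impossible. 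Hence $R\mapsto\omega_R$ is a well-defined map from the regions of $\cH(F)$ to the acyclic orientations of $F$.

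Next I would prove that this map is injective. If two regions $R,R'$ satisfy $\omega_R=\omega_{R'}$, choose $x\in R$ and $x'\in R'$; then for every edge $ij$ the numbers $x_i-x_j$ and $x'_i-x'_j$ are nonzero of the same sign, so their convex combination $t(x_i-x_j)+(1-t)(x'_i-x'_j)$ is nonzero for all $t\in[0,1]$. Thus the straight-line segment from $x$ to $x'$ avoids every $H_{ij}$, hence lies in $\R^n\setminus\cH(F)$, and $R=R'$. (Equivalently, a region is exactly the set of points realizing one fixed sign pattern on the edges, so distinct regions yield distinct orientations.) For surjectivity, given an acyclic orientation $\omega$, its transitive closure is a strict partial order on $[n]$, which admits a linear extension; letting $f(i)$ be the position of $i$ in that extension (or any real-valued function strictly increasing along the partial order), the point $x:=(f(1),\dots,f(n))$ satisfies $x_i\neq x_j$ for all $ij\in E$ and $x_i>x_j$ exactly when $i\to j$ in $\omega$, so the region containing $x$ maps to $\omega$.

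I do not expect a genuine obstacle, as this is a classical fact; the only points that need a little care are the constancy of each edge-sign on a region (the one place where connectedness is used) and the convex-combination argument for injectivity, both routine. A less self-contained alternative would be to count: the number of regions of $\cH(F)$ equals $(-1)^n\chi_F(-1)$, where $\chi_F$ is the chromatic polynomial of $F$, which by Stanley's reciprocity theorem equals the number of acyclic orientations of $F$. However, the direct bijection above is cleaner and, moreover, records which region corresponds to which orientation, which is what is needed later to identify adjacencies in $G(\cH(F))$ with single-arc reversals.
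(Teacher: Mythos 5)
The paper does not prove this lemma at all; it simply cites Greene (the reference \cite{MR712251}) and labels it a ``well-known observation.'' Your proof is a correct, self-contained direct bijection, and it is essentially the standard textbook argument. The key steps --- constancy of each edge sign on a connected region, acyclicity of the induced orientation, injectivity via the convex-combination argument (which correctly uses that a region is exactly one chamber of the sign vector), and surjectivity via a linear extension of the acyclic orientation --- are all sound. One tiny caveat: with your convention ``$i\to j$ iff $x_i>x_j$'' and a partial order in which $i\to j$ means $i<j$, a function $f$ that is \emph{increasing} along the order would give $x_i<x_j$ for $i\to j$, which is backwards; you should either take $f$ decreasing along the order (e.g., $f(i)=n+1-\text{position of }i$) or reverse your orientation convention. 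This is a one-line fix and not a genuine gap. Your alternative via Stanley's reciprocity ($(-1)^n\chi_F(-1)$ counts acyclic orientations) is also valid, but as you observe it only gives a count, not the explicit correspondence needed later, so the direct construction is the right choice here.
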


\subsection{Signed graphic arrangements}
\label{sec:sao}

A \defn{signed graph} is a simple graph $F=([n],E)$ in which every edge in~$E$ is given a sign from~$\{+,-\}$, i.e., we obtain a partition $E=E^+ \cup E^-$, where $E^+$ and~$E^-$ are the sets of edges with a positive and negative sign, respectively.
The \defn{signed graphic arrangement} $\cH(F)$ of~$F$ is a subarrangement of the type~$D$ root system consisting of the hyperplanes with normal vectors
\[
\{\rvec{e_i} - \rvec{e_j} \mid ij\in E^+\} \cup \{\rvec{e_i} + \rvec{e_j} \mid ij\in E^-\}.
\]
Note that if~$E=E^+$, then the signed graphic arrangement is a graphic arrangement.
Also note that Zaslavsky~\cite{MR0676405,MR1120422} considered signed graphs with loops and half-edges, whose graphic arrangements are type~$B$ subarrangements.
For simplicity, we restrict our definition to simple signed graphs.

An \defn{orientation} of an edge $e=ij$ of a signed graph consists of assigning a direction to the two half-edges composing~$e$.
If $e\in E^+$, then these two directions must be the same, either both towards~$i$ or both towards~$j$.
If $e\in E^-$, then the two directions must be opposite, exactly one towards~$i$ and one towards~$j$.
An \defn{orientation} of the signed graph~$F$ is defined by orienting each of its edges.

\begin{figure}[h!]
\makebox[0cm]{ 
\includegraphics{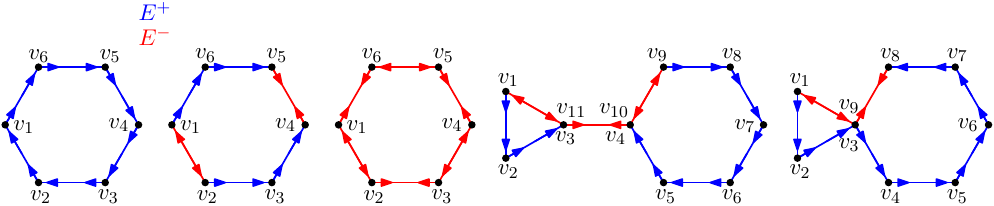}
}
\caption{Different types of cycles in oriented signed graphs.}
\label{fig:signed}
\end{figure}

A \defn{cycle} in an oriented signed graph is an inclusion-minimal subgraph whose vertices and edges can be arranged in an alternating sequence $(v_1,e_1,v_2,e_2,\ldots,v_\ell,e_{\ell})$ (possibly with repetitions) where for $i=1,\ldots,\ell$, the edge~$e_i$ has the end vertices~$v_i$ and~$v_{i+1}$, and from the two half-edges of~$e_{i-1}$ and~$e_i$ incident with~$v_i$ exactly one is oriented towards~$v_i$ (indices are considered cyclically modulo~$\ell$); see Figure~\ref{fig:signed}.
An orientation of a signed graph is \defn{acyclic} if it does not contain any cycle.

A cycle in an oriented signed graph may consist of a simple cycle in the underlying unsigned graph, in which case it must be \defn{balanced}, i.e., contain an even number of negative edges.
However, unlike for unsigned graphs, a cycle in an oriented signed graph may reuse edges and vertices, and may hence not be a cycle in the underlying unsigned graph.
Specifically, it may consist of two unbalanced cycles connected by a path or sharing exactly one vertex~\cite{MR1120422}.

Zaslavsky~\cite{MR1120422} proved the following analogue of Lemma~\ref{lem:gaoa}.

\begin{lemma}
\label{lem:sgaoa}
The regions of the signed graphic arrangement~$\cH(F)$ of a signed graph~$F$ are in bijection with the acyclic orientations of~$F$.
\end{lemma}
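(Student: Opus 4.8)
\emph{Plan.}
The goal is to build an explicit bijection $\Phi$ from $\cR(\cH(F))$ to the set of acyclic orientations of $F$, as originally done by Zaslavsky~\cite{MR1120422}.
Write $E=E^+\cup E^-$ and attach to each edge $e=ij$ the sign $\epsilon_e=+1$ if $e\in E^+$ and $\epsilon_e=-1$ if $e\in E^-$, so that the hyperplane of $\cH(F)$ associated with $e$ is $H_e=\{x\in\R^n:x_i=\epsilon_e x_j\}$.
For a point $x\notin\cH(F)$, orient $F$ by declaring, for each edge $e=ij$, that the half-edge of $e$ at $i$ is directed towards $i$ precisely when $x_i<\epsilon_e x_j$, and symmetrically at $j$.
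A one-line check shows this is a legitimate orientation of the signed graph: for $e\in E^+$ exactly one of the two half-edges points towards its endpoint, whereas for $e\in E^-$ either both do (this happens exactly when $x_i+x_j<0$) or neither does.
Since the sign of $x_i-\epsilon_e x_j$ is constant on each region, this orientation depends only on the region containing $x$, so we obtain a map $\Phi\colon\cR(\cH(F))\to\{\text{orientations of }F\}$.

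\emph{Injectivity and acyclicity of the image.}
Two distinct regions are separated by some hyperplane $H_e$, and crossing $H_e$ reverses the sign of $x_i-\epsilon_e x_j$, hence changes the orientation of $e$; so $\Phi$ is injective.
For acyclicity, suppose $\Phi(R)$ contained a cycle $(v_1,e_1,v_2,\dots,v_\ell,e_\ell)$ in the sense of the definition preceding Figure~\ref{fig:signed}, and fix $x\in R$.
The requirement at $v_k$ --- that exactly one of the two half-edges of $e_{k-1}$ and $e_k$ at $v_k$ points towards $v_k$ --- translates, after a short case distinction, into the statement that $\epsilon_{e_{k-1}}x_{v_{k-1}}-x_{v_k}$ and $x_{v_k}-\epsilon_{e_k}x_{v_{k+1}}$ are nonzero and of the same sign $\delta_k$.
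Comparing the two expressions that involve $e_k$ gives $\delta_{k+1}=\epsilon_{e_k}\delta_k$; running this around the cycle forces $\prod_k\epsilon_{e_k}=1$, and then the real numbers $w_k:=\delta_1\big(\prod_{r<k}\epsilon_{e_r}\big)x_{v_k}$ satisfy $w_1>w_2>\dots>w_{\ell+1}=w_1$, which is absurd.
Hence $\Phi(R)$ is acyclic for every region $R$.

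\emph{Surjectivity.}
Let $\omega$ be an acyclic orientation of $F$; we must realize it as $\Phi(R)$.
Pass to the signed double cover $\tilde F$ on vertex set $[n]\cup[\ol n]$, in which a positive edge $ij$ of $F$ lifts to the pair of edges $\{ij,\,\ol i\,\ol j\}$ and a negative edge $ij$ lifts to $\{i\,\ol j,\,\ol i\,j\}$, together with the coordinate-swapping involution $\tau$ that exchanges $a$ and $\ol a$.
Embedding $\R^n$ into $\R^{[n]\cup[\ol n]}$ as the subspace $W:=\{z:z_{\ol a}=-z_a\text{ for all }a\}$, each hyperplane $z_a=z_b$ of the ordinary graphic arrangement $\cH(\tilde F)$ meets $W$ in exactly the hyperplane of $\cH(F)$ coming from the corresponding edge of $F$ (the two lifts of an edge giving the same hyperplane); hence the regions of $\cH(F)$ are precisely the sets $W\cap\tilde R$ as $\tilde R$ ranges over the regions of $\cH(\tilde F)$ that meet $W$.
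By Lemma~\ref{lem:gaoa} these $\tilde R$ correspond to the acyclic orientations $\tilde\omega$ of $\tilde F$ realizable by a point of $W$; one checks that this happens exactly when $\tau$ carries $\tilde\omega$ to its own reverse, and that such $\tilde\omega$ are in bijection with the orientations of $F$, compatibly with $\Phi$.
So it suffices to lift $\omega$ to an acyclic such $\tilde\omega$ (define $\tilde\omega$ on one lift of each edge to agree with $\omega$ and on the other by the symmetry constraint, then verify acyclicity); it is then realized by a point of $W$ obtained from a $\tau$-antisymmetric linear extension of $\tilde\omega$, built greedily by repeatedly deleting a source $a$ together with the corresponding sink $\ol a$ and assigning coordinates in $\pm$ pairs.

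\emph{Main obstacle and an alternative.}
The only delicate point is the verification inside the surjectivity argument that the lift $\tilde\omega$ is acyclic --- equivalently, that a directed cycle of the double cover projects to a genuine cycle of the signed graph $F$ (which, as Figure~\ref{fig:signed} suggests, may be a pair of unbalanced cycles sharing a vertex or joined by a path rather than a simple cycle) directed consistently with $\omega$; this is the combinatorial core of Zaslavsky's result and hinges on the description of the circuits of the frame matroid of a signed graph, so I would either carry out the (somewhat fiddly) analysis of how a shortest directed cycle of $\tilde F$ projects down, or simply invoke~\cite{MR1120422}.
A route that avoids constructing $\Phi^{-1}$ altogether is to observe that $\Phi$ is an injection from $\cR(\cH(F))$ into the set of acyclic orientations of $F$ (from the first two steps) and that the two sets are equinumerous: by Zaslavsky's region-counting theorem $|\cR(\cH(F))|$ equals, up to sign, the characteristic polynomial of the intersection lattice $\cL(\cH(F))$ evaluated at $-1$, which counts the acyclic orientations of $F$ by the signed-graph analogue of Stanley's reciprocity (again due to Zaslavsky~\cite{MR1120422}), and an injection between finite sets of equal size is a bijection.
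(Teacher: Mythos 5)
The paper does not actually prove Lemma~\ref{lem:sgaoa}; it states the result and attributes it to Zaslavsky~\cite{MR1120422}, so there is no in-text proof to compare your attempt against. Evaluating your argument on its own merits: the construction of $\Phi$, the check that it lands in orientations of the signed graph, and the injectivity argument are all correct. The acyclicity step is the nicest part of your write-up: the chain of same-sign comparisons around the cycle, forcing $\delta_{k+1}=\epsilon_{e_k}\delta_k$, hence $\prod_k\epsilon_{e_k}=1$, and then producing the impossible strictly-descending cyclic sequence $w_1>w_2>\cdots>w_1$, is a clean and correct argument that works verbatim for the ``degenerate'' signed cycles of Figure~\ref{fig:signed} (repeated vertices and edges cause no trouble, since the argument is purely a sign computation).

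The genuine gap is exactly where you flag it: you do not verify that the lift $\tilde\omega$ of an acyclic orientation $\omega$ to the double cover $\tilde F$ is again acyclic. This is not a minor loose end; it is precisely the combinatorial heart of Zaslavsky's theorem. A directed cycle of $\tilde\omega$ need not project to a simple directed cycle of $F$ --- it can cover an edge twice, pass through a vertex twice, and its image may be one of the ``handcuff'' or ``theta''-type configurations that constitute signed circuits --- and one must argue that the projected object, minimized appropriately, is a cycle of $\omega$ in the sense of the paper's definition. Your greedy construction of a $\tau$-antisymmetric linear extension is fine, but it presupposes $\tilde\omega$ acyclic, so it does not close the gap. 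Your fallback route, comparing cardinalities via the characteristic polynomial of $\cL(\cH(F))$ and signed-graph reciprocity, does close the gap logically, but both ingredients are again Zaslavsky theorems of comparable depth to the lemma itself; so either way the proof is ultimately an appeal to \cite{MR1120422} rather than a self-contained argument. Since the paper itself simply cites Zaslavsky, your proposal is not worse than the paper, but it should not be presented as a complete elementary proof.
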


\subsection{Supersolvable arrangements}
\label{sec:supersolvable}

An arrangement of hyperplanes~$\cH$ is supersolvable if its intersection lattice~$\cL(\cH)$ is supersolvable in the sense of Stanley~\cite{MR0309815}.
We will instead use the following characterization of supersolvable arrangements given by Bj\"orner, Edelman, and Ziegler~\cite[Thm.~4.3]{MR1036875} as our definition.
An arrangement~$\cH$ of rank~$n$ is \defn{supersolvable} if either $n\leq 2$, or $n\geq 3$ and $\cH$ is the disjoint union of two nonempty arrangements $\cH_0$ and $\cH_1$, where $\cH_0$ is a supersolvable arrangement of rank~$n-1$, and for any pair of distinct hyperplanes $H, H'\in \cH_1$, there exists a hyperplane $H''\in\cH_0$ such that $H\cap H'\seq H''$.

We proceed to discuss which of the aforementioned hyperplane arrangements are supersolvable.
Among the Coxeter arrangements, those of types~$A$ and~$B$ are known to be the only supersolvable arrangements.

For graphic arrangements, supersolvability of the arrangement~$\cH(F)$ is equivalent to chordality of the graph~$F$.
Formally, a graph is \defn{chordal} if it does not contain any induced cycle of length~$>3$.
One can also characterize chordal graphs by the existence of a certain elimination ordering on their vertices.
Specifically, a vertex~$v$ in a graph~$F$ is \defn{simplicial} if the neighbors of~$v$ induce a complete subgraph of~$F$.
An ordering $v_1,v_2,\ldots ,v_n$ of the vertices of~$F$ is a \defn{perfect elimination ordering}
if $v_i$ is simplicial in $F\setminus \{v_1,\ldots ,v_{i-1}\}$ for all $i=1,\ldots,n$.
Note that a (reverse) perfect elimination ordering is obtained by repeatedly removing a simplicial vertex from~$F$.
It is well-known that a graph~$F$ is chordal if and only if it has a perfect elimination ordering.
Stanley~\cite[Prop.~2.8]{MR0309815} proved the following; see also \cite[Cor.~4.10]{MR2383131}.

\begin{theorem}
\label{thm:supergraphic}
The graphic arrangement $\cH(F)$ of a graph~$F$ is supersolvable if and only if $F$ is chordal.
\end{theorem}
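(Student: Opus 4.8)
The plan is to prove the two implications separately. For ``chordal $\Rightarrow$ supersolvable'' I would induct on the number of vertices of~$F$, and for the converse on the rank $r:=\rank\cH(F)$, in both cases driving the induction with the recursive Bj\"orner--Edelman--Ziegler characterization of supersolvability adopted in the excerpt. Throughout I will use the elementary facts that $\rank\cH(F)=n-c(F)$, where $c(F)$ is the number of connected components of~$F$; that chordality is inherited by vertex-deleted subgraphs; and that a chordal graph has a simplicial vertex (one whose neighborhood is a clique). As a remark, the converse could instead be derived from the general hereditary property that a localization of a supersolvable arrangement at a flat is supersolvable, applied to the flat cut out by the edges of an induced cycle $C_k$ (one checks directly that $\cH(C_k)$ has rank $k-1\geq 3$ but admits no valid supersolvable split for $k\geq 4$); below I describe the self-contained inductive route instead.

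\textbf{Chordal $\Rightarrow$ supersolvable.} If $r\leq 2$ there is nothing to prove, so assume $F$ is chordal with $r\geq 3$. Then some component of~$F$ has at least two vertices, so~$F$ has a \emph{non-isolated} simplicial vertex~$v$; put $S:=N_F(v)$, a nonempty clique. I take $\cH_1$ to be the set of hyperplanes of the edges at~$v$, namely $\cH_1=\{x_v=x_w\mid w\in S\}$, and $\cH_0:=\cH(F)\setminus\cH_1$, which is (up to the free coordinate $x_v$) the graphic arrangement of $F\setminus v$. Since $S$ is a clique, deleting~$v$ does not disconnect its component, so $c(F\setminus v)=c(F)$ and $\rank\cH_0=(n-1)-c(F)=r-1\geq 2$; in particular $\cH_0\neq\emptyset$, and by the induction hypothesis $\cH_0$ is supersolvable of rank $r-1$. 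Finally, for distinct $w,w'\in S$ the flat $\{x_v=x_w=x_{w'}\}$ lies in the hyperplane $\{x_w=x_{w'}\}$, which belongs to~$\cH_0$ because $ww'\in E(F)$; this is the required intersection condition, so $\cH(F)$ is supersolvable.

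\textbf{Supersolvable $\Rightarrow$ chordal.} If $r\leq 2$, then~$F$ has no induced cycle of length $\geq 4$, since the hyperplanes of such a cycle's edges already span a subarrangement of rank $k-1\geq 3$; hence~$F$ is chordal. Otherwise $\cH(F)=\cH_0\sqcup\cH_1$ as in the definition, with $\rank\cH_0=r-1$ and $r\geq 3$. The first step is a structural analysis of~$\cH_1$: the intersection condition forces any two edges in~$\cH_1$ to share a vertex (two disjoint edges give a codimension-$2$ flat contained in no further graphic hyperplane), and rules out~$\cH_1$ being a triangle; hence $\cH_1=\{x_v=x_w\mid w\in S\}$ for one vertex~$v$ and $S\seq N_F(v)$, and applying the condition to pairs in~$\cH_1$ shows $S$ is a clique. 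Let $F'$ be the graph with $\cH(F')=\cH_0$, obtained from~$F$ by deleting the edges between~$v$ and~$S$. Then $\rank\cH_0=r-1$ is equivalent to $c(F')=c(F)+1$, which --- as $S$ remains connected in~$F'$ and the deleted edges all join~$v$ to~$S$ --- means $v$ and $S$ lie in distinct components $K_1\ni v$ and $K_2\supseteq S$ of~$F'$, with $N_F(v)\setminus S\seq K_1$ and no $F$-edge between $K_1\setminus\{v\}$ and~$K_2$. By induction $F'$ is chordal. Now suppose $F$ has an induced cycle $c_1c_2\cdots c_k$ with $k\geq 4$. It must use a deleted edge (otherwise it is an induced cycle of the chordal graph~$F'$), so $v=c_1$, and its cycle-neighbors $c_2,c_k$ each lie in~$S$ or in $N_F(v)\setminus S$. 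If both lie outside~$S$, the whole cycle uses only $F'$-edges and is induced in~$F'$, a contradiction. If both lie in~$S$, then $c_2c_k\in E(F)$ is a chord (here $k\geq 4$), a contradiction. If exactly one lies in~$S$, say~$c_2$, then the path $c_2c_3\cdots c_k$ uses only $F'$-edges yet joins $c_2\in K_2$ to $c_k\in K_1$, which is impossible. Hence~$F$ has no induced cycle of length $\geq 4$, i.e.\ $F$ is chordal.

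\textbf{Main obstacle.} The crux is the structural step of the converse: extracting from the purely combinatorial intersection condition that $\cH_1$ is a ``star to a clique'', and then marrying it with the rank bookkeeping to see that deleting $\cH_1$ exhibits~$v$ as a cut vertex separating~$S$ from the rest of its component. Once this picture is in place, the cycle analysis and the forward induction are routine; the only remaining care lies in keeping track of ranks via $\rank\cH(F)=n-c(F)$ and in invoking the standard chordal-graph facts (existence of simplicial vertices, heredity under vertex deletion).
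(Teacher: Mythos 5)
The paper does not prove Theorem~\ref{thm:supergraphic}; it cites Stanley~\cite{MR0309815} and the textbook~\cite{MR2383131}. Your proposal is therefore not being checked against a proof in the paper but stands on its own, and I believe it is correct. The forward direction works: a chordal graph of rank $\geq 3$ has a non-isolated simplicial vertex $v$ in a component of size $\geq 2$, the clique neighborhood $S=N_F(v)$ is nonempty, deleting $v$ preserves the component count (so $\rank\cH_0=r-1$), and the intersection condition is precisely that $S$ is a clique. The converse is also correct and is indeed the crux. The key observations --- that the BEZ intersection condition forces any two hyperplanes of $\cH_1$ to share a vertex (disjoint edges $ab$, $cd$ yield a codimension-$2$ flat contained in no graphic hyperplane), that the star-or-triangle alternative follows, that the triangle case fails because all three hyperplanes through $x_a=x_b=x_c$ would lie in $\cH_1$, and that the intersection condition for pairs of edges in the star makes $S$ a clique --- are all valid. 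The rank identity $\rank\cH(F)=n-c(F)$ then shows that removing the star edges disconnects $v$ from $S$ into distinct $F'$-components $K_1\ni v$ and $K_2\supseteq S$ with $N_F(v)\setminus S\subseteq K_1$ and no $F$-edge from $K_1\setminus\{v\}$ to $K_2$, and the three-way case analysis on the cycle-neighbors $c_2,c_k$ of $v$ cleanly rules out any induced cycle of length $\geq4$.

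Compared with Stanley's original argument (which the paper implicitly invokes), this is a genuinely different route. Stanley works in the intersection lattice $\cL(\cH)$, characterizing supersolvability via a maximal chain of modular elements, and identifies modular coatoms of the partition lattice with subsets that behave like the closed neighborhood of a simplicial vertex. Your proof avoids the lattice/matroid machinery entirely, staying within the recursive Björner--Edelman--Ziegler definition of supersolvability that the paper has adopted as its working definition (Section~\ref{sec:supersolvable}); everything is done through elementary graph theory (simplicial vertices, component counting, the Helly-type star/triangle dichotomy for pairwise-intersecting edges). That buys a self-contained, combinatorial argument in exactly the language the paper uses, at the modest cost of re-deriving the structure of $\cH_1$ by hand rather than reading it off the modular chain. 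One small point worth spelling out a bit more in a final write-up is the triangle exclusion: it should be stated explicitly that if $\cH_1=\{H_{ab},H_{bc},H_{ca}\}$, then the only graphic hyperplanes containing $\{x_a=x_b=x_c\}$ are these three, so none lies in $\cH_0$ and the intersection condition fails.
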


A similar result is known for signed graphic arrangements.
Specifically, a vertex~$v$ of a signed graph~$F$ with edge set~$E=E^+\cup E^-$ is \defn{signed simplicial} if for any two distinct neighbors~$x,y$ of~$v$, there is an edge between~$x$ and~$y$ that completes a balanced triangle, formally:
\begin{itemize}[leftmargin=5mm]
\item if $vx, vy\in E^+$ or $vx, vy\in E^-$, then we have $xy\in E^+$;
\item if $vx\in E^+$ and~$vy\in E^-$, then we have $xy\in E^-$.
\end{itemize}
An ordering $v_1,v_2,\ldots ,v_n$ on the vertices of~$F$ is a \defn{signed perfect elimination ordering} if $v_i$ is signed simplicial in~$F\setminus \{v_1,\ldots ,v_{i-1}\}$.
Note that a (reverse) signed perfect elimination ordering is obtained by repeatedly removing a signed simplicial vertex from~$F$.

Zaslavsky~\cite[Thm.~2.2]{MR1808091} proved the following; see also~\cite[Thm.~4.19]{MR3886267} for the explicit statement on signed graphs.

\begin{theorem}
\label{thm:ssupergraphic}
If $F$ is a signed graph graph with a signed perfect elimination ordering, then the signed graphic arrangement~$\cH(F)$ is supersolvable.
\end{theorem}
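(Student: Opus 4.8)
The plan is to prove the statement by induction on the number of vertices $n=|V(F)|$, peeling off the first vertex $v_1$ of a signed perfect elimination ordering $v_1,\dots,v_n$ and splitting $\cH(F)$ accordingly. The base case is $\rank(\cH(F))\leq 2$, where $\cH(F)$ is supersolvable by definition. So I would assume $\rank(\cH(F))\geq 3$ (hence $n\geq 3$) and split into two cases. If $v_1$ is isolated, then no hyperplane of $\cH(F)$ constrains the $v_1$-coordinate, so the intersection lattice of $\cH(F)$ is isomorphic (via $X'\mapsto X'\times\R$) to that of $\cH(F\setminus v_1)$; since supersolvability of a central arrangement is a property of its intersection lattice (Stanley's definition, which the Bj\"orner--Edelman--Ziegler condition characterizes) and $F\setminus v_1$ inherits the signed perfect elimination ordering $v_2,\dots,v_n$ on fewer vertices, the inductive hypothesis closes this case.

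The main case is when $v_1$ has a nonempty neighbourhood $N$. I would set $\cH_1$ to be the hyperplanes of $\cH(F)$ coming from edges incident to $v_1$, and $\cH_0:=\cH(F\setminus v_1)$ the rest, so that $\cH(F)=\cH_0\sqcup\cH_1$ is a disjoint union with $\cH_1\neq\emptyset$. The place where signed-simpliciality enters is the observation that for any two distinct $x,y\in N$ the edge $xy$ exists in $F$ and carries exactly the sign that makes the triangle $v_1xy$ balanced. Writing the normal of the $v_1x$-hyperplane as $e_{v_1}+\epsilon_x e_x$ with $\epsilon_x=\mp1$ according to the sign of $v_1x$, a short case analysis over the (at most four) sign patterns in the definition of a signed simplicial vertex shows that $(e_{v_1}+\epsilon_x e_x)-(e_{v_1}+\epsilon_y e_y)$ is a nonzero scalar multiple of the normal of the $xy$-hyperplane, which lies in $\cH_0$. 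This single fact yields two things simultaneously: (i) for any distinct $H',H''\in\cH_1$ the flat $H'\cap H''$ is contained in the hyperplane $H\in\cH_0$ of the edge joining the two $N$-endpoints, which is precisely the supersolvable-split condition; and (ii) modulo $\operatorname{span}(\cH_0)$ all normals in $\cH_1$ are parallel, so $\operatorname{span}(\cH_0\cup\cH_1)=\operatorname{span}(\cH_0)+\R(e_{v_1}+\epsilon_x e_x)$ for any fixed $x\in N$. Since the last summand involves the coordinate $e_{v_1}$, which no normal of $\cH_0$ does, this gives $\rank(\cH_0)=\rank(\cH(F))-1$.

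To finish, I would close the induction on $\cH_0$: it equals $\cH(F\setminus v_1)$, and $F\setminus v_1$ has the signed perfect elimination ordering $v_2,\dots,v_n$ on $n-1$ vertices, so either $\rank(\cH_0)\leq 2$ (supersolvable by definition) or the inductive hypothesis applies; moreover $\cH_0$ is nonempty because $\rank(\cH_0)=\rank(\cH(F))-1\geq 2$. Combining this with (i) and (ii), $\cH(F)=\cH_0\sqcup\cH_1$ verifies every clause of the Bj\"orner--Edelman--Ziegler characterization, and the induction is complete.

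I expect the main obstacle to be item (ii): one must argue that adjoining the $|N|$ hyperplanes at $v_1$ raises the rank by exactly $1$ and not by more, and this is exactly where the hypothesis is indispensable, because without $N$ being a ``balanced clique'' the extra normals would not all collapse into $\operatorname{span}(\cH_0)+\R(e_{v_1}+\epsilon_x e_x)$. The accompanying sign bookkeeping (matching the cases $\{v,x\},\{v,y\}\in E^+$; $\{v,x\},\{v,y\}\in E^-$; and $\{v,x\}\in E^+,\{v,y\}\in E^-$) is routine but must be carried out with care so that the produced vector genuinely equals a scalar multiple of the correct $e_x\pm e_y$. The remaining delicate points are purely bookkeeping: handling an isolated $v_1$, and making sure the base case $\rank\leq 2$ is actually reached, both of which are dealt with as indicated above.
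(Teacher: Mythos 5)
The paper does not give its own proof of this theorem: it is stated as a citation to Zaslavsky~\cite[Thm.~2.2]{MR1808091} (see also~\cite[Thm.~4.19]{MR3886267}), and the authors explicitly remark that Zaslavsky in fact proves a full characterization, of which this statement is one direction. So there is no in-paper proof to compare against; I can only assess your argument on its own terms, and it is correct and complete.

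Your induction peels a signed-simplicial vertex and verifies the Bj\"orner--Edelman--Ziegler recursive criterion directly, which is the exact type~$B$ analogue of the standard proof of Stanley's Theorem~\ref{thm:supergraphic} for chordal graphs via a perfect elimination ordering. The sign bookkeeping works out: writing the normal of the hyperplane for $v_1x$ as $e_{v_1}+\epsilon_x e_x$ with $\epsilon_x=-1$ if $v_1x\in E^+$ and $\epsilon_x=+1$ if $v_1x\in E^-$, the difference $\epsilon_x e_x - \epsilon_y e_y$ equals $\pm(e_x-e_y)$ when $xy\in E^+$ (cases $\{v,x\},\{v,y\}$ of the same sign) and $\pm(e_x+e_y)$ when $xy\in E^-$ (mixed signs), so $H'\cap H''\subseteq H$ holds with $H$ the $xy$-hyperplane in $\cH_0$. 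Your rank argument is also sound: all $\cH_1$-normals are congruent modulo $\operatorname{span}(\cH_0)$ to a single vector with a nonzero $v_1$-coordinate, which no $\cH_0$-normal has, hence $\rank(\cH_0)=\rank(\cH(F))-1$, and with $\rank(\cH(F))\geq 3$ this forces $\cH_0\neq\emptyset$. The isolated-vertex case is handled legitimately by noting that deleting an unconstrained coordinate leaves the intersection lattice unchanged up to isomorphism; alternatively you could simply peel the \emph{last} vertex of the ordering rather than the first (it is also signed simplicial, with all other vertices still present), which avoids ever having to treat isolation separately. One small presentational caveat: you should say explicitly that if the ordering witnesses signed simpliciality of $v_1$ in $F$ and of $v_i$ in $F\setminus\{v_1,\dots,v_{i-1}\}$ for $i\geq 2$, then $v_2,\dots,v_n$ is verbatim a signed perfect elimination ordering of $F\setminus v_1$ --- you use this, and it is true directly from the definition, but it is worth one sentence.
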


Note that Zaslavsky actually gave a complete characterization (if and only if) of signed graphs whose arrangements are supersolvable, which includes specific additional graphs.
We omit the details here.

\subsection{Structure of the graph of regions~$G(\cH)$}

For the following lemmas, we consider a supersolvable arrangement~$\cH$ of rank~$n\geq 3$ and the decomposition of~$\cH$ into two subarrangements~$\cH_0$ and~$\cH_1$ given by the definition.
We define the surjective map
\begin{equation}
\label{eq:rho}
\rho : \cR(\cH) \to \cR(\cH_0)
\end{equation}
that maps every region of~$\cH$ to the region of~$\cH_0$ that it is contained in.

We recall the following basic properties of supersolvable hyperplane arrangements that can be found, for instance, in~\cite{MR737888, MR1036875}.

\begin{lemma}
\label{lem:regions}
Every hyperplane $H\in\cH_1$ has the following properties:
\begin{enumerate}[label=(\roman*),leftmargin=8mm]
\item We have $\rank(\cH_0 \cup \{H\})=n>\rank(\cH_0)=n-1$.
\item $H$ splits every region in~$\cR(\cH_0)$ into exactly two regions.
\end{enumerate}

\begin{proof}
This proof is illustrated in Figure~\ref{fig:planes}.

\begin{figure}[h!]
\includegraphics[page=1]{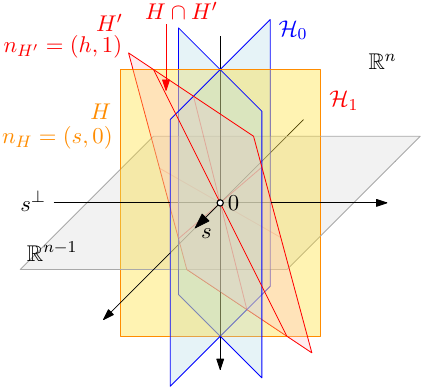}
\caption{Illustration of the proof of Lemma~\ref{lem:regions}.}
\label{fig:planes}
\end{figure}

We assume w.l.o.g.\ that the dimension of the ambient space equals~$n=\rank(\cH)$.
Let $N(\cH_0)$ denote the set of normal vectors of the hyperplanes in~$\cH_0$.
For vectors $x,y\in\R^n$, we write $x\cdot y$ for the standard scalar product of~$x$ and~$y$.
By the definition of supersolvability, we know that $\rank(\cH_0)=n-1<\rank(\cH)=n$.
We may choose a basis of $\R^n$ such that $\vspan(N(\cH_0)) \seq \R^{n-1} \times \{0\}$.

To prove~(i), consider a hyperplane~$H \in \cH_1$, and suppose for the sake of contradiction that $\rank(\cH_0 \cup \{H\})=n-1$, i.e., $H$ has a normal vector $n_H=(s,0)\in\R^n$ with $s\in\R^{n-1}$.
Let $H' \in \cH_1$ with normal vector $n_{H'} \notin \R^{n-1} \times \{0\}$.
Such a hyperplane exists in~$\cH_1$ since $\rank(\cH)=n>\rank(\cH_0)=n-1$.
We may assume w.l.o.g.\ that $n_{H'}=(h,1)$ for some $h \in \R^{n-1}$.
Then we have
\[ H \cap H' = \{(x,\lambda) \in \R^{n-1} \times \R \mid x\cdot s=0 \wedge x\cdot h+\lambda=0\}. \]
In particular, the map $H \cap H' \to s^\perp$ defined by $(x,\lambda) \mapsto x$ is surjective.
Here, $s^\perp \seq \R^{n-1}$ denotes the set of vectors in~$\R^{n-1}$ orthogonal to~$s$.
Since $\cH$ is supersolvable, there exists a hyperplane $H''\in \cH_0$ with $H\cap H' \seq H''$.
Let $n_{H''}=(t,0)\in\R^n$ for some $t \in \R^{n-1}$ be one of the normal vectors of~$H''$.
Let $x \in s^\perp$.
By the above, there is $\lambda \in \R$ such that $(x,\lambda) \in H \cap H' \seq H''$.
Hence $(x,\lambda)\cdot (t,0)=x\cdot t=0$.
This shows $s^\perp \seq t^\perp$, so $s^\perp=t^\perp$.
But this implies $H=H''\in \cH_0$, a contradiction, so (i) is proved.

To prove~(ii), note that every region $R\in\cR(\cH_0)$ is of the form $R=B \times \R$ for some $B \seq \R^{n-1}$ as $N(\cH_0) \seq \R^{n-1} \times \{0\}$.
Let $H \in \cH_1$ with normal vector $n_H \in \R^n$.
By~(i), we may assume that $n_H=(h,1)$ for some $h \in \R^{m-1}$.
Then $\{x \in \R^n \mid x\cdot n_H > 0\}\cap R = \{(x,\lambda) \in B\times \R \mid x\cdot h + \lambda > 0\}$ is a proper subset of $R$.
Hence $H$ splits $R$ into two regions.
\end{proof}
\end{lemma}

The following key lemma captures the recursive structure of the graph of regions~$G(\cH)$; see Figure~\ref{fig:GH}.

\begin{figure}
\includegraphics[page=13]{hyper}
\caption{The graph of regions~$G(\cH)$ of the arrangement of Figure~\ref{fig:permA4}.
The lattice of regions~$P(\cH,R_0)$ is obtained by picking the region labeled~1 as canonical base region~$R_0$.
The resulting lattice is isomorphic to the one shown in Figure~\ref{fig:congA}\,(a).
}
\label{fig:GH}
\end{figure}

\begin{lemma}
\label{lem:GH-structure}
The graph of regions~$G(\cH)$ has the following properties:
\begin{enumerate}[label=(\roman*),leftmargin=8mm]
\item For any $R\in\cR(\cH_0)$, the subgraph of~$G(\cH)$ induced by the regions in~$\rho^{-1}(R)$ is a path of length~$|\cH_1|$.
\item If two distinct regions $R,R'\in\cR(\cH_0)$ are adjacent in~$G(\cH_0)$, then in~$G(\cH)$ the only edges between the two paths~$\rho^{-1}(R)$ and~$\rho^{-1}(R')$ are between the first two and the last two vertices, and possibly between other pairs of vertices at the same distance from the end vertices.
\item If two distinct regions $R,R'\in\cR(\cH_0)$ are not adjacent in~$G(\cH_0)$, then in~$G(\cH)$ there are no edges between the two paths~$\rho^{-1}(R)$ and~$\rho^{-1}(R')$.
\end{enumerate}
\end{lemma}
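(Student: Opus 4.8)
The plan is to make rigorous the ``cylinder picture'' underlying the zigzagging method. Since passing to the quotient by the lineality space $\bigcap_{H\in\cH}H$ changes neither $G(\cH)$, nor the rank, nor supersolvability, nor the decomposition $\cH=\cH_0\cup\cH_1$, I would first reduce to the case where $\cH$ is essential, so that $\cH$ has rank $n$ in $\R^n$ and $\cH_0$ has rank $n-1$; then $\ell:=\bigcap_{H\in\cH_0}H$ is a line through the origin, and every region of $\cH_0$ is invariant under translation along $\ell$. The first key claim would be that every $H\in\cH_1$ is transversal to $\ell$, i.e.\ $H\cap\ell=\{0\}$: since $\cH$ has rank $n$ while $\cH_0$ has rank $n-1$ there is some $H^{*}\in\cH_1$ with $\ell\not\seq H^{*}$, and if some $H\in\cH_1$ had $\ell\seq H$ then $H\ne H^{*}$, so supersolvability yields $H_0\in\cH_0$ with $H\cap H^{*}\seq H_0$; as $H\ne H^{*}$ and $H\ne H_0$, the two codimension-$2$ flats $H\cap H^{*}$ and $H\cap H_0$ coincide, and since $\ell\seq H\cap H_0$ this would force $\ell\seq H^{*}$, a contradiction.

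Next I would choose coordinates $(y,t)\in\R^{n-1}\times\R$ with $\ell$ the $t$-axis, so that $\cH_0$ consists of ``vertical'' hyperplanes $\ol{H}\times\R$ and descends to a rank-$(n-1)$ arrangement $\ol{\cH}_0$ in $\R^{n-1}$ whose regions are exactly the projections of those of $\cH_0$, while each $H\in\cH_1$ becomes the graph $\{t=f_H(y)\}$ of a linear function $f_H$ on $\R^{n-1}$ (this uses transversality). The second key claim would be that for distinct $H,H'\in\cH_1$ the coincidence locus $\{y:f_H(y)=f_{H'}(y)\}$ is a hyperplane of $\ol{\cH}_0$: it equals the projection of the codimension-$2$ flat $H\cap H'$, which by supersolvability lies in some $H_0\in\cH_0$; projecting $H_0$ gives the hyperplane $\ol{H}_0\in\ol{\cH}_0$, and comparing dimensions shows the locus equals $\ol{H}_0$. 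Part~(i) then follows quickly: on a region $\ol R$ of $\ol{\cH}_0$ each difference $f_H-f_{H'}$ is nowhere zero, hence of constant sign, so the functions $f_H|_{\ol R}$ form a chain $f_{H^{(1)}}<\cdots<f_{H^{(k)}}$ with $k=|\cH_1|$; thus $\rho^{-1}(R)$ consists of the $k+1$ nonempty convex slabs $P^R_i=\{(y,t):y\in\ol R,\ f_{H^{(i)}}(y)<t<f_{H^{(i+1)}}(y)\}$ (with $f_{H^{(0)}}\equiv-\infty$, $f_{H^{(k+1)}}\equiv+\infty$), and since $P^R_i,P^R_j$ with $i<j$ are separated by exactly the $j-i$ hyperplanes $H^{(i+1)},\dots,H^{(j)}$ of $\cH_1$ and by no hyperplane of $\cH_0$, the induced subgraph is the path $P^R_0-\cdots-P^R_k$ of length $|\cH_1|$.

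For parts~(ii) and~(iii) I would count separating hyperplanes. For $R\ne R'$, the number of $\cH$-hyperplanes separating $P^R_i$ from $P^{R'}_j$ equals the number of $\cH_0$-hyperplanes separating $R$ from $R'$ (because $P^R_i\seq R$ and $P^{R'}_j\seq R'$) plus $|A_R(i)\triangle A_{R'}(j)|$, where $A_R(i)\seq\cH_1$ denotes the set of the $i$ hyperplanes of $\cH_1$ that are smallest in the $\ol R$-order, equivalently those on whose ``$t$-large'' side $P^R_i$ lies. If $R,R'$ are not adjacent in $G(\cH_0)$ then the first summand is at least $2$, so no edge exists, which is~(iii). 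If $R,R'$ are adjacent the first summand is $1$, so $P^R_i\sim P^{R'}_j$ in $G(\cH)$ precisely when $A_R(i)=A_{R'}(j)$; since these sets have sizes $i$ and $j$ this forces $i=j$, and it holds automatically for $i=j=0$ (both empty) and for $i=j=k$ (both equal to $\cH_1$). Hence every cross-edge joins $P^R_i$ to $P^{R'}_i$ — vertices at the same distance from the ends of the two paths — with the edge between the first vertices and the edge between the last vertices always present, which is exactly~(ii).

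The step I expect to be the real obstacle is establishing the two key claims, namely transversality of the hyperplanes of $\cH_1$ to $\ell$ and the identification of the coincidence locus $\{f_H=f_{H'}\}$ with a hyperplane of $\ol{\cH}_0$: these are the only places where the supersolvability hypothesis is genuinely used, and they require a little care with the reduction to the essential case and with the codimension bookkeeping. Everything afterward is elementary combinatorics of slabs and sign vectors.
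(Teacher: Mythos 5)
Your argument is correct, and it is considerably more detailed than the paper's own proof, which for this lemma is essentially a sketch: it derives~(i) from the observation that a vertex of degree~$\geq 3$ inside $\rho^{-1}(R)$ would force two hyperplanes of $\cH_1$ to meet in the interior of $R$ (contradicting supersolvability), dispatches~(iii) in one sentence, and does not explicitly address~(ii). Your ``cylinder'' reduction makes the underlying geometry explicit: you reduce to the essential case, prove that $\ell=\bigcap_{H\in\cH_0}H$ is transversal to every $H\in\cH_1$, view each $H\in\cH_1$ as a graph $\{t=f_H(y)\}$, and identify the pairwise coincidence loci $\{f_H=f_{H'}\}$ with hyperplanes of $\ol{\cH}_0$. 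Both of your ``key claims'' are proved correctly (in the first, the equality $H\cap H^* = H\cap H_0$ follows from the containment plus a codimension count; in the second, the locus equals $\ol H_0$ by a dimension count after projecting). The slab decomposition $P^R_0,\dots,P^R_k$ and the separating-hyperplane count $|\operatorname{sep}_{\cH_0}(R,R')| + |A_R(i)\triangle A_{R'}(j)|$ then give~(i), (ii), and~(iii) cleanly, with the cardinality argument $|A_R(i)|=i$ forcing cross-edges to join vertices at the same index.

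The main thing your approach buys over the paper's is a genuine proof of~(ii), including the always-present edges at the two ends ($A_R(0)=A_{R'}(0)=\emptyset$ and $A_R(k)=A_{R'}(k)=\cH_1$) and the ``same distance'' constraint for any further cross-edges. The place where the supersolvability hypothesis enters is the same in both treatments — two $\cH_1$-hyperplanes intersect inside a hyperplane of $\cH_0$ — so at the conceptual level the arguments agree, but your coordinate-based slab picture turns that one fact into a complete, verifiable proof of all three parts.
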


\begin{proof}
We assume w.l.o.g.\ that the dimension of the ambient space equals~$n=\rank(\cH)$.

We first prove~(i), and this part of the proof is illustrated in Figure~\ref{fig:cone}~(a).
For every region~$R\in\cR(\cH_0)$ we can choose normal vectors~$n_H\in\R^n$ for all $H\in\cH_0$ such that $R$ is the convex cone $R=\{x\in\R^n\mid x\cdot n_H>0 \text{ for all } H\in \cH_0\}$.
By Lemma~\ref{lem:regions}~(ii), every hyperplane $H\in\cH_1$ splits $R$ into two regions.
For any $x\in R$, consider the ray $r_x(\lambda):=\lambda x\in R$, where $\lambda$ is a real-valued parameter in the range~$\lambda\in(0,\infty)$, which intersects each of the hyperplanes~$H\in\cH_1$.
Specifically, for very small $\lambda>0$ the point $r_x(\lambda)$ lies on one side of~$H$, and for very large~$\lambda$ it lies on the other side of~$H$.
Consequently, the ray $r_x(\lambda)$ defines a total ordering of the hyperplanes in~$\cH_1$, namely the order in which they are intersected by $r_x(\lambda)$ as the parameter $\lambda$ increases.
We claim that this total ordering is independent of the direction vector~$x\in R$.
Indeed, suppose for the sake of contradiction that there are points~$x,y\in R$, values $0<\lambda_{x,1}<\lambda_{x,2}$, $0<\lambda_{y,1}<\lambda_{y,2}$ and two hyperplanes~$H,H'\in \cH_1$ such that $r_x(\lambda_{x,1})\in H$, $r_x(\lambda_{x,2})\in H'$, $r_y(\lambda_{y,1})\in H'$, and $r_y(\lambda_{y,2})\in H$.
Then for the direction vector $z=\nu x+(1-\nu)y\in R$ for some suitable $\nu\in(0,1)$ and $\lambda>0$ we would have $r_z(\lambda)\in H\cap H'$, i.e., some point in the intersection~$H\cap H'$ that is not contained in any hyperplane from~$H_0$, a contradiction.
It follows that for every~$x\in R$, the ray $r_x(\lambda)$ intersects the hyperplanes of~$\cH_1$ in the same order, independent of~$x$.
We distinguish two special regions~$R_0,R_\infty\in\rho^{-1}(R)$, namely the one intersected by~$r_x(\lambda)$ for very small~$\lambda>0$ and the one intersected for very large~$\lambda$, respectively.
By what we said before, each region in~$\rho^{-1}(R)\setminus\{R_0,R_\infty\}$ is bounded by exactly two hyperplanes from~$\cH_1$, and $R_0,R_\infty$ are bounded by exactly one hyperplane from~$\cH_1$.
This completes the proof of~(i).

\begin{figure}
\includegraphics[page=2]{cone}
\caption{Illustration of the proof of Lemma~\ref{lem:GH-structure}.}
\label{fig:cone}
\end{figure}

The proof of~(ii) is illustrated in Figure~\ref{fig:cone}~(b).
Let $R,R' \in \cR(\cH_0)$ be adjacent, i.e., these two regions are separated by exactly one hyperplane from~$\cH_0$, which we denote by~$H_0$.
By (i), $P_R:=\rho^{-1}(R)$ and $P_{R'}:=\rho^{-1}(R')$ are paths of length~$\ell:=|\cH_1|$.
Let $P_R=(R_0,\ldots,R_\ell)$ and $P_{R'}=(R_0',\ldots,R_\ell')$ be the sequences of regions along each of the paths.
Each edge of the paths corresponds to a hyperplane from~$\cH_1$, so for $i=1,\ldots,\ell$ we let $H_i\in \cH_1$ denote the hyperplane separating~$R_{i-1}$ and~$R_i$, and $H_i'\in \cH_1$ the hyperplane separating~$R_{i-1}'$ and~$R_i'$.
As any region~$R_i$, $i\in\{0,\ldots,\ell\}$, is separated from any region~$R_j'$, $j\in\{0,\ldots,\ell\}$, by $H_0\in\cH_0$, we have that $R_i$ and~$R_j'$ are adjacent in~$G(\cH)$ if and only if they are not separated by any hyperplanes from~$\cH_1$.

We observe the following for regions $R_i$ and~$R_i'$ that are adjacent in the graph~$G(\cH)$.
First of all we must have $\{H_1,\ldots,H_i\}=\{H_1',\ldots,H_i'\}$ and $\{H_{i+1},\ldots,H_\ell\}=\{H_{i+1}',\ldots,H_\ell'\}$.
Now let $d>0$ be the smallest integer such that $H_{i+d}=H_{i+1}'$.
If $d>1$, then as $H_{i+1}$ and~$H_{i+d}$ appear in reversed order along~$P_R$ and~$P_{R'}$, then these two hyperplanes must intersect, and their intersection lies in~$H_0$.
Furthermore, by a similar argument and the linearity of the hyperplanes we must have $(H_{i+1},H_{i+2},\ldots,H_{i+d})=(H_{i+d}',\ldots,H_{i+2}',H_{i+1}')$, i.e., the next $d$ hyperplanes appear in the opposite order along the two paths~$P_R$ and~$P_{R'}$.
Note that this conclusion is trivially true if $d=1$.
For any $d>0$ we obtain that $R_{i+d}$ and~$R_{i+d}'$ are separated by no hyperplane from~$\cH_1$ (only by~$H_0\in\cH_0$), and therefore they are adjacent in the graph~$G(\cH)$.
Furthermore, one can check that $R_j$ for $j=i+1,\ldots,i+d$, is separated from~$R_k'$, $k=0,\ldots,i+d-1$, by at least one hyperplane from~$\cH_1$, in addition to~$H_0\in\cH_0$, and therefore $R_j$ and~$R_k'$ are not adjacent in the graph~$G(\cH)$.
Observing that $R_0$ and~$R_0'$ are adjacent in~$G(\cH)$, and applying the aforementioned observation inductively for $i=0,\ldots,\ell$ proves~(ii).

To prove~(iii), note that as $R,R'$ are not adjacent in $G(\cH_0)$, they are separated by at least two hyperplanes $H,H'\in\cH_0$.
It follows that any two regions $P \in \rho^{-1}(R)$ and $P' \in \rho^{-1}(R')$ are also separated by both~$H$ and~$H'$, and therefore $P$ and~$P'$ are not adjacent in~$G(\cH)$.
\end{proof}

\subsection{Lattice of regions}
\label{sec:regions}

For any graph~$G$ and any two vertices~$x,y$ in~$G$, we write $d_G(x,y)$ for the \defn{distance} between~$x$ and~$y$ in~$G$, i.e., for the length of the shortest path connecting~$x$ and~$y$.
Let $G=(V,E)$ be a connected bipartite graph, and let~$x_0\in V$ be one of its vertices.
We let $P(G,x_0)$ be the graded poset on the ground set~$V$ whose cover graph is obtained from~$G$ by orienting all edges away from~$x_0$.
Formally, we define the rank of any $y\in V$ in~$P(G,x_0)$ as $\rank(y):=d_G(x_0,y)$.
As $G$ is assumed to be bipartite, there are no odd cycles and hence no edges between vertices of the same rank, so this is well-defined.

Using this definition, by choosing one region~$R_0\in\cR$ as \defn{base region}, one can orient the edges of the graph of regions~$G(\cH)$ away from~$R_0$.
As $G(\cH)$ is always connected and bipartite, this defines a graded poset~$P(\cH,R_0):=P(G(\cH),R_0)$, called the \defn{poset of regions}; see Figure~\ref{fig:GH}.
It is easy to see that in this poset the rank of any~$R\in\cR$ is given by the number of hyperplanes from~$\cH$ that separate~$R_0$ and~$R$, i.e.,
\[
\rank(R)=|\{H\in\cH\mid R_0 \text{ and } R \text{ lie on opposite sides of } H\}.
\]
Bj\"orner, Edelman, and Ziegler~\cite{MR1036875} proved that for a supersolvable arrangement~$\cH$, there always exists a region~$R_0\in\cR(\cH)$ such that the poset~$P(\cH,R_0)$ is a lattice.

To state the next lemma, we define the notion of a \defn{canonical} region for a supersolvable arrangement~$\cH$ of rank~$n\geq 1$.
If $n\leq 2$, then every region of~$\cR(\cH)$ is canonical.
If $n\geq 3$, we consider the map~$\rho$ defined in~\eqref{eq:rho}, and for any~$R\in\cR(\cH)$ we define the \defn{fiber} $\varphi(R):=\rho^{-1}(\rho(R))\seq \cR(\cH)$.
Then a region $R_0\in\cR(\cH)$ is canonical if $\rho(R_0)$ is canonical for~$\cH_0$ and if $R_0$ is the first or last vertex of the path induced by the fiber~$\varphi(R_0)$ in~$G(\cH)$ (recall Lemma~\ref{lem:GH-structure}~(i)).

\begin{lemma}[{\cite[Thm.~4.6]{MR1036875}}]
\label{lem:P-lattice}
If $R_0$ is a canonical region, then the poset~$P(\cH, R_0)$ is a lattice.
\end{lemma}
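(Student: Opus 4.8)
The plan is to argue by induction on the rank~$n$ of~$\cH$. For the base case $n\le 2$ every region is canonical, and $P(\cH,R_0)$ is checked by hand to be a lattice: for $n\le 1$ it is a chain, and for $n=2$ the graph~$G(\cH)$ is an even cycle, which once oriented away from~$R_0$ becomes the poset with bottom~$R_0$, top the antipodal region~$-R_0$, and two internally disjoint chains joining them. For the inductive step let $n\ge 3$, write $\cH=\cH_0\cup\cH_1$ for the supersolvable decomposition, and set $R_0':=\rho(R_0)$, where $\rho$ is the map from~\eqref{eq:rho}. Since $R_0$ is canonical, $R_0'$ is canonical for~$\cH_0$, so by the inductive hypothesis $L_0:=P(\cH_0,R_0')$ is a lattice, and the task is to lift this along~$\rho$ using Lemma~\ref{lem:GH-structure}.

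First I would record the behaviour of~$\rho$ at the poset level. Writing $S(X)\seq\cH$ for the set of hyperplanes separating a region~$X$ from~$R_0$ and $S_i(X):=S(X)\cap\cH_i$, one checks $S(\rho(X))=S_0(X)$, since $X$ and~$\rho(X)$ lie on the same side of every hyperplane of~$\cH_0$; because traversing a cover of~$P(\cH,R_0)$ enlarges the separation set by exactly one hyperplane, it follows that $\rho$ is order preserving and that it collapses precisely those covers whose hyperplane lies in~$\cH_1$. By Lemma~\ref{lem:GH-structure}~(i) each fiber~$\rho^{-1}(R)$ is a path on $k+1$ vertices, where $k:=|\cH_1|$, obtained by slicing the open region~$R$ with the $k$ traces of the hyperplanes of~$\cH_1$; supersolvability makes these traces pairwise disjoint inside~$R$, so the pieces are linearly ordered, and the first key claim is that this linear order is in fact a chain $R[0]<_L R[1]<_L\cdots<_L R[k]$ of~$L:=P(\cH,R_0)$, where the labelling is normalized so that $R_0=R_0'[0]$. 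Granting this, Lemma~\ref{lem:GH-structure}~(ii)--(iii) says that each cover $R\lessdot R'$ of~$L_0$ is realized by the edges $\{R[i],R'[i]\}$ for~$i$ ranging over some subset of~$\{0,\dots,k\}$ containing~$0$ and~$k$, and that there are no edges at all between non-adjacent fibers; in particular the layers $\{R[0]:R\in\cR(\cH_0)\}$ and $\{R[k]:R\in\cR(\cH_0)\}$ each carry a copy of~$G(\cH_0)$ inside~$G(\cH)$.

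With this recursive picture in hand, I would finish as follows. One shows that for the canonical base region the order on~$L$ is recovered from separation sets, i.e.\ $X\le_L Y$ iff $S(X)\seq S(Y)$, so that $L$ is realized as the inclusion order on the family $\{S(X):X\in\cR(\cH)\}$ inside the Boolean lattice on~$\cH$. The meet of two regions~$X$ and~$Y$ is then produced explicitly: set $W:=\rho(X)\meet_{L_0}\rho(Y)$, which exists by the inductive hypothesis, and let $X\meet Y$ be the element of the fiber~$\rho^{-1}(W)$ whose $\cH_1$-separation set is the largest subset of $S_1(X)\cap S_1(Y)$ that occurs as the $\cH_1$-separation set of some region along that fiber (the sets occurring along a fiber form a chain, so this is well defined). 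Using the compatibility of the fiber labellings with the covers of~$L_0$ from the previous paragraph, one verifies that this region is the greatest lower bound of~$X$ and~$Y$. Joins follow dually, or by transporting meets through the antipodal involution $x\mapsto-x$, which is an order-reversing automorphism of~$G(\cH)$ sending~$R_0$ to the maximum~$-R_0$; since a finite meet-semilattice with a maximum is a lattice, this completes the induction.

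The main obstacle is the bundle of structural claims feeding the middle step: that the per-region linear orders on the fibers can be oriented coherently across all of~$G(\cH_0)$ (no monodromy around cycles), that with the canonical choice of~$R_0$ each fiber is genuinely a chain of~$L$ rather than merely a path, that covers of~$L_0$ act on the $\cH_1$-part of separation sets in the index-preserving way indicated, and that $X\le_L Y$ is equivalent to $S(X)\seq S(Y)$. This is exactly where the supersolvability hypothesis (distinct hyperplanes of~$\cH_1$ meeting inside a hyperplane of~$\cH_0$) and the requirement that~$R_0$ lie at an end of its fiber are used, and carrying it out is the technical core, essentially the content of~\cite[Thm.~4.6]{MR1036875}; once it is established, the separation-set description and the meet formula above make the lattice property routine.
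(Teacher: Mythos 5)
Your proposal matches the paper's route: induct on the rank of $\cH$, and in the inductive step lift the lattice property of $P(\cH_0,\rho(R_0))$ along $\rho$ using the fiber/chain structure and the canonical placement of $R_0$ at one end of its fiber. The structural claims you defer to~\cite{MR1036875} (fibers form chains, covers of the base lattice act coherently on the fibers, and the resulting explicit meet formula) are exactly what the paper packages into the abstract notion of $\ell$-suspended graphs via Lemma~\ref{lem:susp} and then proves in Proposition~\ref{prop:susp-cong} (via the downset description $(x,i)^\downarrow=\bigcup_{y\in x^\downarrow}C_{j_{x,i,y}}(y)$ and the index-based meet formula), so the paper's version is self-contained where yours cites the reference for the technical core, but the underlying argument and meet construction are the same ones you sketch in the language of separation sets.
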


\section{Proofs of Theorems~\ref{thm:super-ham} and~\ref{thm:super-quotient}}
\label{sec:proofs}

\subsection{Graphic suspension}

For a graph~$G=(V,E)$ and subset of vertices~$U\seq V$ we write~$G[U]$ for the subgraph of~$G$ induced by~$U$.
Given two graphs~$H=(V,E)$ and~$F=(V',E')$, the \defn{Cartesian product}~$H\times F$ is the graph on the vertex set~$V\times V'$ with edge set
\[\big\{\{(x,x'),(y,y')\}\mid (\{x,y\}\in E \wedge x'=y') \vee (x=y \wedge \{x',y'\}\in E')\big\}.\]
We write~$P_\ell$ for the path of length~$\ell$ on the vertices $0,1,\ldots,\ell$, and we write $G\simeq H$ to indicate that two graphs~$G$ and~$H$ are isomorphic.

Let $G$ and~$H=(V,E)$ be graphs, and let $\ell\geq 1$ be an integer.
We say that~$G$ is \defn{$\ell$-suspended} over~$H$ if $G$ is a spanning subgraph of the Cartesian product~$H\times P_\ell$ such that $G[\{v\}\times P_\ell]\simeq P_\ell$ for all $v\in V$, and furthermore $G[V\times\{0\}]\simeq H$ and~$G[V\times\{\ell\}]\simeq H$.
In words, $G$ is a spanning subgraph of the Cartesian product of~$H$ with a path of length~$\ell$ in which all copies of the path are present, and the first and last copy of~$H$ along the paths are present, whereas edges of the other intermediate copies of~$H$ along the paths may or may not be present.
The notion of suspension is introduced to provide a graphic translation and clarification of the structure described in~\cite{MR1036875}.
Specifically, using this definition, Lemma~\ref{lem:GH-structure} can be restated more concisely as follows.

\begin{lemma}
\label{lem:susp}
The graph~$G(\cH)$ is (isomorphic to a graph that is) $\ell$-suspended over~$G(\cH_0)$ for $\ell:=|\cH_1|$.
\end{lemma}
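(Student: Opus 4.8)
The plan is to unwind the definition of ``$\ell$-suspended'' directly from Lemma~\ref{lem:GH-structure}, taking $H:=G(\cH_0)$, $\ell:=|\cH_1|$, and $\rho\colon\cR(\cH)\to\cR(\cH_0)$ the fiber map of~\eqref{eq:rho}. By Lemma~\ref{lem:GH-structure}(i), each fiber $\rho^{-1}(R)$ with $R\in\cR(\cH_0)$ induces a path of length~$\ell$ in $G(\cH)$, hence has $\ell+1$ vertices and two endpoints. The first step is to fix, for every $R\in\cR(\cH_0)$, an ordering $R^{(0)},R^{(1)},\dots,R^{(\ell)}$ of the vertices of $\rho^{-1}(R)$ along this path, chosen so that the orderings are \emph{aligned}: whenever $R,R'$ are adjacent in~$H$, every edge of $G(\cH)$ joining $\rho^{-1}(R)$ to $\rho^{-1}(R')$ has the form $\{R^{(i)},(R')^{(i)}\}$ for a common index~$i$. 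Granting this, the map $f\colon\cR(\cH)\to\cR(\cH_0)\times\{0,1,\dots,\ell\}$ defined by $R^{(i)}\mapsto(R,i)$ is a bijection, since $\rho$ is surjective and every fiber has $\ell+1$ elements, and I claim it is an isomorphism from $G(\cH)$ onto a spanning subgraph of $H\times P_\ell$: the within-fiber edges of $G(\cH)$ map to edges $\{(R,i),(R,i+1)\}$ of $H\times P_\ell$, the cross-fiber edges map, by alignment, to edges $\{(R,i),(R',i)\}$ of $H\times P_\ell$, and by Lemma~\ref{lem:GH-structure}(iii) there are no further edges in $G(\cH)$.

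It then remains to see that the two extreme copies of $H$ are full. Two index-$0$ vertices $R^{(0)},(R')^{(0)}$ are adjacent in $G(\cH)$ exactly when $R,R'$ are adjacent in~$H$: in that case Lemma~\ref{lem:GH-structure}(ii) provides an edge between the two ``first'' vertices of the paths, and otherwise Lemma~\ref{lem:GH-structure}(iii) forbids any edge. So the subgraph of $G(\cH)$ induced on $\{R^{(0)}\mid R\in\cR(\cH_0)\}$ is isomorphic to $H$ via $R^{(0)}\mapsto R$, and by the same argument with the ``last'' vertices in Lemma~\ref{lem:GH-structure}(ii) so is the subgraph induced on $\{R^{(\ell)}\mid R\in\cR(\cH_0)\}$. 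Together with the previous paragraph, this is exactly the assertion that $G(\cH)$ is $\ell$-suspended over $G(\cH_0)$.

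The one point requiring genuine care is the existence of an aligned family of orderings, and I expect this, rather than the bookkeeping above, to be the main obstacle. I would argue it as follows. Orient one fiber $\rho^{-1}(R_\ast)$ arbitrarily. If $R'$ is adjacent to $R_\ast$, then by Lemma~\ref{lem:GH-structure}(ii) the two endpoints of $\rho^{-1}(R_\ast)$ are joined to the two distinct endpoints of $\rho^{-1}(R')$; calling the endpoint adjacent to $R_\ast^{(0)}$ the vertex $(R')^{(0)}$ determines the ordering of $\rho^{-1}(R')$, and Lemma~\ref{lem:GH-structure}(ii) then forces every cross-edge between the two fibers to preserve the index. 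Propagating this rule along a spanning tree of the connected graph~$H$ orders every fiber, and the remaining issue is consistency around the cycles of~$H$, i.e.\ that the ordering a fiber receives does not depend on the path used. This is where supersolvability is essential: the requirement that any two hyperplanes of $\cH_1$ meet inside $\bigcup\cH_0$ forces the traces of the hyperplanes of $\cH_1$ inside any region of $\cH_0$ to occur in one and the same combinatorial order, so that ``position relative to $\cH_1$'' is a well-defined global coordinate, orienting all fibers simultaneously and manifestly independent of any path in~$H$. Orienting every fiber by this coordinate yields an aligned family, which completes the proof.
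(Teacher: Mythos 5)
Your proposal correctly identifies that the real content of the lemma — beyond the bare restatement of Lemma~\ref{lem:GH-structure}, which is all the paper offers — is the existence of a \emph{globally consistent} choice of orderings of the fiber paths, so that cross-fiber edges match up index to index. The paper itself simply presents Lemma~\ref{lem:susp} as a ``concise restatement'' of Lemma~\ref{lem:GH-structure} and gives no separate proof, so singling out this alignment issue is the right thing to do. The first two paragraphs of your argument (the bijection $f$, the mapping of within-fiber and cross-fiber edges, and the identification of the two extreme copies of $H$ with $G(\cH_0)$) are all fine, granted the aligned orderings.

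However, the justification you give for the existence of aligned orderings contains a genuine error. You claim that supersolvability ``forces the traces of the hyperplanes of $\cH_1$ inside any region of $\cH_0$ to occur in one and the same combinatorial order.'' This is false, and in fact supersolvability produces exactly the opposite behaviour: whenever two hyperplanes $H',H''\in\cH_1$ have $H'\cap H''\subseteq H$ for some $H\in\cH_0$, crossing $H$ \emph{reverses} the relative order of the traces of $H'$ and $H''$. Concretely, in the braid arrangement with $n=3$, $\cH_0=\{x_1=x_2\}$ and $\cH_1=\{x_1=x_3,\ x_2=x_3\}$: in the region $\{x_1<x_2\}$ a path in $x_3$-increasing direction meets $x_1=x_3$ before $x_2=x_3$, while in $\{x_1>x_2\}$ the order is reversed. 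So the ``same combinatorial order'' claim cannot be what underwrites the consistency.

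What is actually true, and what you should argue instead, is weaker but sufficient: the \emph{$\cH_1$-sign vector} of the ``zero-end'' of each fiber is globally constant. Concretely, the two endpoints $A_0,A_\ell$ of a fiber path lie on opposite sides of every $H'\in\cH_1$ (each step in the fiber path crosses exactly one hyperplane of $\cH_1$ and each such hyperplane is crossed exactly once). If $A_0$ is joined across $H\in\cH_0$ to an endpoint $B_j$ of an adjacent fiber, then $A_0$ and $B_j$ agree in sign on every hyperplane other than $H$, in particular on all of $\cH_1$. Fixing the sign vector $\sigma_0\in\{+,-\}^{\cH_1}$ of $A_0$ and propagating along the connected graph $G(\cH_0)$, one finds that every fiber has exactly one endpoint with $\cH_1$-sign vector $\sigma_0$; declaring that endpoint to be index~$0$ gives the aligned family of orderings, and this is manifestly independent of the propagation path. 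Your spanning-tree setup is correct; it is only the closing consistency argument that needs to be replaced by this sign-vector observation.
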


\begin{lemma}
\label{lem:susp-dist}
Suppose that $G$ is $\ell$-suspended over~$H$.
Then for any two vertices~$x,y$ of~$H$ and any~$i\in\{0,\ldots,\ell\}$ we have $d_G((x,0),(y,i))=d_G((x,\ell),(y,\ell-i))=d_H(x,y)+i$.
\end{lemma}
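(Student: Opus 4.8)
The plan is to prove the two bounds $d_G((x,0),(y,i))\ge d_H(x,y)+i$ and $d_G((x,0),(y,i))\le d_H(x,y)+i$ separately, and then to deduce the identity for $d_G((x,\ell),(y,\ell-i))$ by reflecting the $P_\ell$-factor. For the lower bound I would take an arbitrary path in $G$ from $(x,0)$ to $(y,i)$, say with vertices $(v_0,j_0),\dots,(v_m,j_m)$ where $(v_0,j_0)=(x,0)$ and $(v_m,j_m)=(y,i)$. Since $G\seq H\times P_\ell$, each edge of this path changes exactly one of the two coordinates; call the step \emph{horizontal} if $j_t=j_{t+1}$ (so $\{v_t,v_{t+1}\}\in E(H)$) and \emph{vertical} if $v_t=v_{t+1}$ (so $|j_t-j_{t+1}|=1$), and let $h$ and $w$ be the numbers of horizontal and vertical steps, so $m=h+w$. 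The second coordinates move only at the vertical steps and pass from $0$ to $i$, so $w\ge i$; the horizontal steps chain into a walk in $H$ from $x$ to $y$ of length $h$, so $h\ge d_H(x,y)$. Hence $m\ge d_H(x,y)+i$. (This also handles the case $d_H(x,y)=\infty$: the components of $G$, being contained in those of $H\times P_\ell$, are products of components of $H$ with $P_\ell$, so both sides are then $+\infty$.)

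For the upper bound I would exhibit an explicit walk of length $d_H(x,y)+i$. By Lemma~\ref{lem:GH-structure}(i)---the part of the $\ell$-suspension property that is used here---the fiber $\{y\}\times\{0,1,\dots,\ell\}$ induces a path in $G$, and since $G[V\times\{0\}]\simeq H$ the level $V\times\{0\}$ carries a full copy of $H$. Concatenating a shortest $x$--$y$ path of $H$, mapped into $G$ via $v\mapsto(v,0)$, with the sub-path $(y,0),(y,1),\dots,(y,i)$ of that fiber gives a walk in $G$ from $(x,0)$ to $(y,i)$ of length $d_H(x,y)+i$; together with the lower bound this proves $d_G((x,0),(y,i))=d_H(x,y)+i$. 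Finally, the map $\sigma\colon(v,j)\mapsto(v,\ell-j)$ is an automorphism of $H\times P_\ell$ interchanging the levels $0$ and $\ell$ and mapping fibers to fibers, so $\sigma(G)$ is again $\ell$-suspended over $H$; applying the case just proved to $\sigma(G)$ yields $d_G((x,\ell),(y,\ell-i))=d_{\sigma(G)}((x,0),(y,i))=d_H(x,y)+i$.

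The routine parts---the two coordinate projections for the lower bound and the concrete path for the upper bound---are quick. The one point that requires care is that the upper-bound argument needs more than the bare fact that the two end levels are copies of $H$: it uses that each fiber $\{v\}\times\{0,\dots,\ell\}$ is a path of $G$, so that the ``vertical'' portion of the path is available. For $G=G(\cH)$ and $H=G(\cH_0)$ this is precisely Lemma~\ref{lem:GH-structure}(i), which is why I would phrase the $\ell$-suspension hypothesis so as to include it.
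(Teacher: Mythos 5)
Your proof is correct and fills in what the paper waves away with the single sentence ``This is immediate from the definition of suspension via the Cartesian product.'' The lower bound via projecting onto the two Cartesian factors, the upper bound via an explicit walk, and the reflection $\sigma\colon(v,j)\mapsto(v,\ell-j)$ for the second identity are exactly the right ingredients.

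More importantly, your closing remark is a genuine catch, not just a stylistic quibble. As literally stated, the definition of $\ell$-suspended only requires $G$ to be a spanning subgraph of $H\times P_\ell$ with $G[V\times\{0\}]\simeq H$ and $G[V\times\{\ell\}]\simeq H$; it does not force the fiber $\{v\}\times\{0,\ldots,\ell\}$ to be a path of $G$. Under the bare definition the lemma is false: take $H$ to be a single edge $\{x,y\}$ and $\ell=2$, and let $G$ contain the horizontal edges at levels $0$, $1$, $2$ and the vertical edges $(y,0)(y,1)$ and $(y,1)(y,2)$ but \emph{not} $(x,0)(x,1)$ or $(x,1)(x,2)$. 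Then $G$ is $2$-suspended over $H$ per the definition, yet $d_G((x,0),(x,1))=3\neq d_H(x,x)+1=1$. So the upper-bound step really does need the ``fibers are paths'' property, which is what Lemma~\ref{lem:GH-structure}(i) supplies in the only place $\ell$-suspension is actually used (Lemma~\ref{lem:susp}), and which \emph{is} built into the companion notion of $\alpha$-suspension. The clean fix is to add the fiber condition to the definition of $\ell$-suspended, exactly as you suggest; your proof is then complete and the paper's downstream uses are unaffected.
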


\begin{proof}
This is immediate from the definition of suspension via the Cartesian product.
\end{proof}

We also need a second (non-equivalent) definition of suspension, which does not use Cartesian products.
Given a sequence~$(\lambda(x))_{x\in V}$ of integers~$\lambda(x)\geq 1$, we say that $G$ is \defn{$\lambda$-suspended} over~$H=(V,E)$ if $G$ is the union of the paths $((x,0),(x,1),\ldots,(x,\lambda(x)))$ and the edges~$\{(x,0),(y,0)\}$ and~$\{(x,\lambda(x)),(y,\lambda(y))\}$ for all $\{x,y\}\in E$.
In words, $G$ is $\lambda$-suspended over~$H$ if it consists of two disjoint copies of~$H$, which are connected via disjoint paths between corresponding pairs of vertices, where the lengths of the paths are specified by the function~$\lambda$.
Note that if $G$ is $\lambda$-suspended for the constant function~$\lambda:=\ell$ for all $x\in V$, then it is $\ell$-suspended, and furthermore it is the $\ell$-suspended graph with the fewest possible edges.

The following easy lemma asserts that suspended graphs behave particularly nicely with regards to Hamiltonian paths/cycles.

\begin{lemma}
\label{lem:susp-ham}
Suppose that $G$ is $\lambda$-suspended over~$H$.
If $H$ has a Hamiltonian path, then $G$ has a Hamiltonian path.

Furthermore, if $G$ is $\ell$-suspended over~$H$ and $H$ has a Hamiltonian cycle of even length, then~$G$ has a Hamiltonian cycle of even length.
\end{lemma}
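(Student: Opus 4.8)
The plan is to prove both statements by a single \emph{zigzag} construction that lifts a Hamiltonian path (resp.\ cycle) of $H$ to one of $G$: I will traverse the fibers of the suspension alternately upwards and downwards, switching from one fiber to the next along the bottom copy of $H$ when at level~$0$ and along the top copy of $H$ when at level~$\alpha(x)$ (resp.\ $\ell$). Recall that in an $\alpha$-suspended graph the vertex set of $G$ is the disjoint union $\bigsqcup_{x\in V}V(P(x))$ of the fibers, and likewise for an $\ell$-suspended graph (where moreover every fiber is a path of length~$\ell$; this is part of the structure, cf.\ Lemma~\ref{lem:susp-dist}). Hence it suffices to exhibit a path (resp.\ cycle) in $G$ that traverses each fiber in its entirety and uses only edges present in $G$.

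For the first statement, fix a Hamiltonian path $v_1,v_2,\ldots,v_m$ of $H$. Consider the walk in $G$ that starts at $(v_1,0)$, runs up the fiber $P(v_1)$ to $(v_1,\alpha(v_1))$, crosses via the top edge $\{(v_1,\alpha(v_1)),(v_2,\alpha(v_2))\}$ to $(v_2,\alpha(v_2))$, runs down $P(v_2)$ to $(v_2,0)$, crosses via the bottom edge $\{(v_2,0),(v_3,0)\}$ to $(v_3,0)$, runs up $P(v_3)$, and so on; in general it traverses $P(v_i)$ upwards when $i$ is odd and downwards when $i$ is even, and the crossing edge from $P(v_i)$ to $P(v_{i+1})$ lies in the top copy of $H$ when $i$ is odd and in the bottom copy when $i$ is even. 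Each such crossing edge is present in $G$ because $\{v_i,v_{i+1}\}\in E$ and both extreme copies of $G$ are isomorphic to $H$; moreover each step enters a fiber at one endpoint and leaves from the other, so no vertex repeats. Since $\{v_1,\ldots,v_m\}=V$, every fiber is traversed exactly once, so this walk is a Hamiltonian path of $G$.

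For the second statement, let $G$ be $\ell$-suspended over $H$ and fix a Hamiltonian cycle $v_1,v_2,\ldots,v_m,v_1$ of $H$ with $m$ even. Running the same zigzag over $v_1,\ldots,v_m$ (all fibers now of length~$\ell$) and using that $m$ is even, after traversing $P(v_m)$ \emph{downwards} the walk ends at $(v_m,0)$, at the same level as the starting vertex $(v_1,0)$; since $\{v_m,v_1\}\in E$, the bottom edge $\{(v_m,0),(v_1,0)\}$ is present in $G$ and closes the walk into a cycle. As before every fiber is traversed in full, so this is a Hamiltonian cycle of $G$, and its length is $m\ell$ (from the $m$ fibers, each contributing $\ell$ edges) plus $m$ (the crossing edges, one per pair $\{v_i,v_{i+1}\}$ read cyclically), i.e.\ $m(\ell+1)$, which is even because $m$ is.

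The construction is essentially routine, and I expect the only delicate point to be the parity bookkeeping in the second statement: the zigzag returns to level~$0$ at $v_m$ precisely when $m$ is even, which is exactly why the hypothesis asks for a Hamiltonian cycle of $H$ of even length, and it is also what makes the resulting cycle length $m(\ell+1)$ even. The remaining checks are verifying at each step that the chosen crossing edge indeed belongs to $G$ (which reduces to the extreme copies $G[V\times\{0\}]$ and $G[V\times\{\ell\}]$, resp.\ $G[\{(x,\alpha(x))\mid x\in V\}]$, containing all edges of $H$) and that the fibers partition $V(G)$ so that the walk is genuinely Hamiltonian; neither presents any real difficulty.
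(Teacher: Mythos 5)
Your proof is correct and follows essentially the same zigzag construction as the paper's: both replace odd-indexed vertices of the Hamiltonian path of $H$ by an upward traversal of the corresponding fiber and even-indexed vertices by a downward traversal, glue consecutive fibers together via edges in the bottom or top copy of $H$, and close the cycle (when $|V(H)|$ is even) via the bottom edge between the first and last fiber. The only cosmetic difference is that the paper phrases the construction as a vertex substitution rather than as an explicit walk, but the argument and the final length count $(\ell+1)\cdot|V(H)|$ are identical.
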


\begin{figure}
\includegraphics[page=14]{hyper}
\caption{Illustration of Lemma~\ref{lem:susp-ham}, continuing the example from Figure~\ref{fig:GH} (also see Figure~\ref{fig:zigzag}).}
\label{fig:susp-ham}
\end{figure}

Lemma~\ref{lem:susp-ham} and its proof are illustrated in Figure~\ref{fig:susp-ham}.

\begin{proof}
Let $V$ be the vertex set of~$H$ and let $(x_1,\ldots,x_n)$ be a Hamiltonian path in~$H$.
Then for each odd index~$k$, replace $x_k$ by the path~$((x_k,0),(x_k,1),\ldots,(x_k,\lambda(x_k)))$ in~$G$.
Similarly, for each even index~$k$, replace $x_k$ by the path~$((x_k,\lambda(x_k)),(x_k,\lambda(x_k)-1),\ldots,(x_k,0))$ in~$G$.
Consecutive pairs of end vertices from these subpaths are of the form $\{(x_k,0),(x_{k+1},0)\}$ if $k$ is even and $\{(x_k,\lambda(x_k)),(x_{k+1},\lambda(x_{k+1}))\}$ if $k$ is odd, respectively, i.e., these are edges in~$G$.
We conclude that the resulting sequence of vertices in~$G$ is a Hamiltonian path.

To prove the second part of the lemma assume that $n$ is even and that $(x_1,\dots,x_n)$ is Hamiltonian cycle in~$H$, i.e., a Hamiltonian path such that~$x_n$ and~$x_1$ are adjacent vertices.
Then the above argument yields a Hamiltonian path from~$(x_1,0)$ to $(x_n,0)$ in~$G$, so the pair~$\{(x_1,0),(x_n,0)\}$ is an edge in~$G[V\times\{0\}]\simeq H$.
We thus also obtain a Hamiltonian cycle in~$G$ of length~$(\ell+1)\cdot n$, which is even.
\end{proof}

\subsection{Congruences on lattices with suspended cover graphs}

The following crucial proposition describes how a congruence acts on a lattice whose cover graph is suspended.

\begin{proposition}
\label{prop:susp-cong}
Suppose that $G$ is $\ell$-suspended over~$H=(V,E)$, and let~$x_0$ be a vertex of~$H$ such that $L_H:=P(H,x_0)$ and $L_G:=P(G,(x_0,0))$ are both lattices.
Then $L_G[V\times\{0\}]$ and~$L_G[V\times\{\ell\}]$ are two sublattices isomorphic to~$L_H$, and $(x,0)\lessdot (x,1)\lessdot \cdots\lessdot (x,\ell)$ is a chain for each $x\in V$.
Furthermore, for any lattice congruence~$\equiv$ on~$L_G$ we have the following:
\begin{enumerate}[label=(\roman*),leftmargin=8mm]
\item The restriction ${\equiv^*}:=\{(x,y)\mid (x,0)\equiv (y,0)\}$ is a lattice congruence on~$L_H$.
\item For any equivalence class~$X$ of~$\equiv$, the projection~$p(X):=\{x\mid (x,i)\in X\}$ is an equivalence class of~$\equiv^*$.
\item The quantity $\lambda(x):=|\{[(x,i)]\mid i=0,\ldots,\ell\}|-1$, defined for any~$x\in V$, is either equal to~0 for all~$x\in V$ or strictly positive for all~$x\in V$.
\item If the cover graph of~$L_H/{\equiv^*}$ has a Hamiltonian path, then the cover graph of~$L_G/{\equiv}$ has a Hamiltonian path.
\end{enumerate}
\end{proposition}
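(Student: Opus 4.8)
The plan is to derive everything from the observation that $L_G$ contains two copies of $L_H$, namely on the bottom layer $V\times\{0\}$ and the top layer $V\times\{\ell\}$ of the suspension, together with two retractions of $L_G$ onto them. First I would record the order‑theoretic shape of $L_G$. By Lemma~\ref{lem:susp-dist} the rank of $(x,i)$ in $L_G$ is $\rank_{L_H}(x)+i$, so every cover of $L_G$ is either \emph{vertical}, $(x,i)\lessdot(x,i+1)$, or \emph{horizontal}, $(x,i)\lessdot(y,i)$ with $x\lessdot y$ in $L_H$ and $\{(x,i),(y,i)\}$ an edge of $G$. In particular the chain claim is immediate; moreover both $(x,i)\mapsto i$ and the projection $(x,i)\mapsto(x,\ell)$ (which uses $G[V\times\{\ell\}]\simeq H$) are order preserving, so $(x,i)\le_{L_G}(y,j)$ forces $i\le j$ and $x\le_{L_H}y$. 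Consequently a common lower bound of two elements of $V\times\{0\}$ again lies in $V\times\{0\}$, while every common upper bound lies above $(x\join_{L_H}y,0)$; this exhibits $L_G[V\times\{0\}]$ as a sublattice isomorphic to $L_H$ with $(x,0)\meet(y,0)=(x\meet_{L_H}y,0)$ and $(x,0)\join(y,0)=(x\join_{L_H}y,0)$, and the dual argument handles $L_G[V\times\{\ell\}]$. For the claim that $L_G$ itself is a lattice I would note that it has a least element $(x_0,0)$ and a greatest element $(\widehat1_{L_H},\ell)$, so it suffices to produce joins (a finite poset with a least and a greatest element in which all joins exist is a lattice): the least upper bound of $(x,i)$ and $(y,j)$ has first coordinate $x\join_{L_H}y$ (it lies below $(x\join_{L_H}y,\ell)$) and is obtained by raising the second coordinate to the least admissible height, and verifying minimality against an arbitrary common upper bound is the mildly technical heart of this first step. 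Finally I set $r_0(x,i):=(x,i)\meet(\widehat1_{L_H},0)=(x,0)$ and $r_\ell(x,i):=(x,i)\join(\widehat0_{L_H},\ell)=(x,\ell)$; these retract $L_G$ onto its two end sublattices and, being given by a meet resp.\ join with a fixed element, they send $\equiv$‑equivalent elements to $\equiv$‑equivalent ones.

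With this in hand part~(i) is formal: the inclusion $L_H\simeq L_G[V\times\{0\}]\hookrightarrow L_G$ is an embedding of lattices under which $\equiv$ restricts to $\equiv^*$, so compatibility of $\equiv$ with $\meet,\join$ on $L_G$ yields compatibility of $\equiv^*$ with $\meet_{L_H},\join_{L_H}$. For part~(iii) I would argue by propagation: if $\alpha(x)=1$, i.e.\ $(x,0)\equiv(x,\ell)$, then for every $L_H$‑neighbour $y$ of $x$ one has $(x,\ell)\join(y,0)=(y,\ell)$ when $x\le_{L_H}y$ and $(x,0)\meet(y,\ell)=(y,0)$ when $y\le_{L_H}x$, so applying $\,\cdot\join(y,0)$ resp.\ $\,\cdot\meet(y,\ell)$ to the relation $(x,0)\equiv(x,\ell)$ gives $(y,0)\equiv(y,\ell)$ and hence $\alpha(y)=1$; since $L_H$ is connected this forces either $\alpha\equiv1$ or $\alpha>1$ throughout $V$. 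For part~(ii), $r_0(X)=p(X)\times\{0\}$ is the image of an $\equiv$‑class under a map that respects $\equiv$, hence is contained in a single $\equiv$‑class of $L_G$, so $p(X)$ lies in one $\equiv^*$‑class; for the reverse inclusion I would use that $\equiv$‑classes are intervals (Lemma~\ref{lem:cong}(i)) and join the bottom of $X$ with the elements $(x',0)$ equivalent to it, which slides $X$ over to the column of $x'$, together with part~(iii) to dispose of the columns that collapse entirely.

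For part~(iv), write $\bar H$ and $\bar G$ for the cover graphs of $L_H/{\equiv^*}$ and $L_G/{\equiv}$. In the first alternative of part~(iii), $\alpha\equiv1$, so each column $\{(x,i):0\le i\le\ell\}$ lies in one $\equiv$‑class; by part~(ii) the assignment $[(x,i)]\mapsto[x]_{\equiv^*}$ is then a bijection $L_G/{\equiv}\to L_H/{\equiv^*}$, and by Lemma~\ref{lem:cong}(ii) it is an isomorphism of cover graphs, so $\bar G\simeq\bar H$ inherits the Hamiltonian path. In the second alternative, $\alpha(x)\ge2$ for all $x$, and the target is to show that $\bar G$ is an $\alpha$‑suspension of $\bar H$: over each $\equiv^*$‑class $Y$ the images of the column $\{(x,i):0\le i\le\ell\}$ form a chain of $\equiv$‑classes (by the chain claim and Lemma~\ref{lem:cong}(ii)); by Lemma~\ref{lem:cong}(ii) every cover of $L_G/{\equiv}$ joining two distinct columns comes from a horizontal cover of $L_G$, so it connects the two columns at matching positions; and because layers $0$ and $\ell$ of $G$ are full copies of $H$, for $Y\lessdot Y'$ in $L_H/{\equiv^*}$ the bottom classes of the two columns are adjacent in $\bar G$, as are the top classes. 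Then Lemma~\ref{lem:susp-ham}, applied to a Hamiltonian path of $\bar H$, produces a Hamiltonian path of $\bar G$.

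The step I expect to be the main obstacle is the ``matching positions'' assertion in part~(iv): one must show that $\equiv$ collapses chain‑segments in the columns over two $L_H$‑adjacent classes $Y$ and $Y'$ according to the \emph{same} pattern (in particular that these columns carry equally many $\equiv$‑classes), so that $\bar G$ is genuinely $\alpha$‑suspended over $\bar H$ rather than merely a spanning subgraph of some product. This requires understanding how the congruence interacts with which horizontal edges of $G$ are present on which middle layers, and the interval structure of classes together with the forcing relations among covers will be the main tools; establishing the reverse inclusion in part~(ii) presents a difficulty of the same flavour. Obtaining the lattice property of $L_G$ and parts~(i) and~(iii) is, by contrast, essentially bookkeeping once the retractions $r_0$ and $r_\ell$ are available.
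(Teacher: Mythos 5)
Your high‑level plan — exploit the two embedded copies of $L_H$ on levels $0$ and $\ell$ together with the retractions $r_0,r_\ell$ — is the right one, and your argument for part~(iii), pushing the relation $(x,0)\equiv(x,\ell)$ around $L_H$ via the maps $z\mapsto z\join(y,0)$ and $z\mapsto z\meet(y,\ell)$ for covers $y$ of $x$ and then invoking connectivity, is a genuine simplification over the paper's route. However, parts~(ii) and~(iv) contain the real gap that you yourself flag, and the paper closes it with a small but essential lemma that is absent from your sketch. It first proves that in the lattice formed by two chains $x_0\lessdot\cdots\lessdot x_\ell$ and $y_0\lessdot\cdots\lessdot y_\ell$ glued at their endpoints ($x_0=y_0$, $x_\ell=y_\ell$), any lattice congruence identifying $x_0$ with $x_1$ must identify $y_1,\ldots,y_\ell$ with one another, and dually at the top; it then applies this repeatedly inside the interval $[(x,0),(y,\ell)]$ of $L_G$, for each cover $x\lessdot y$ of $L_H$, whose cover graph is exactly such a two‑chain ladder with rungs at the levels where $G$ has an $xy$‑edge. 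This propagates $(x,0)\equiv(y,0)$ across every rung and pins down precisely how the congruence classes interleave the two fibers over $x$ and $y$, from which~(ii) and~(iii) follow. Your proposed repair for~(ii), joining the bottom $(a,\alpha)$ of $X$ with $(x',0)$, only moves you upward in $L_H$: for $x'\not\geq a$ the join $(a,\alpha)\join(x',0)$ lies in the fiber over $a\join x'\neq x'$, so the reverse inclusion remains unproven. An argument in the spirit of your part~(iii) \emph{can} replace the ladder lemma for~(ii), but it needs meets as well as joins, together with the auxiliary step $(x,0)\equiv(y,0)\Rightarrow(x,\ell)\equiv(y,\ell)$ for covers $x\lessdot y$, none of which appears in your sketch.

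Finally, the ``matching positions'' worry you raise for part~(iv) is not quite the real obstacle: the definition of $\alpha$‑suspension explicitly permits the chain lengths $\alpha(x)$ to differ between adjacent vertices, and the Hamiltonian‑path construction in Lemma~\ref{lem:susp-ham} only ever uses the horizontal edges on levels $0$ and $\ell$, never on middle levels, so no alignment of the intermediate partitions across adjacent columns is needed. What is actually required is part~(ii): that the $\equiv$‑classes meeting a fiber $\{(x,i)\mid 0\le i\le\ell\}$ depend only on the $\equiv^*$‑class of $x$ and form one well‑defined chain with an unambiguous bottom class (containing all $(x',0)$ with $x'\equiv^*x$) and top class; the adjacencies between the ends of neighboring chains are then supplied by $G[V\times\{0\}]\simeq H\simeq G[V\times\{\ell\}]$ together with Lemma~\ref{lem:cong}~(ii). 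Once (ii) is in hand, the rest of your sketch for (iv) goes through.
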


Of course, by symmetry the poset~$P(G,(x_0,\ell))$ is also a lattice.

\begin{proof}
For any~$x\in V$ and~$r\in\{0,\ldots,\ell\}$ we define $C_\lambda(x):=\{(x,i)\mid 0\leq i\leq \lambda\}$ and~$C(x):=C_\ell(x)$.
The fact that $L_G[V\times\{0\}]$ and~$L_G[V\times\{\ell\}]$ are sublattices isomorphic to~$L_H$ and that the elements of~$C(x)$ form the chain~$(x,0)\lessdot (x,1)\lessdot \cdots\lessdot (x,\ell)$ for each~$x\in V$ follows directly from Lemma~\ref{lem:susp-dist} and our definition of rank in~$L_G=P(G,(x_0,0))$.

We now prove~(i).
For any two $x,y\in V$ we have
\begin{equation}
\label{eq:meet0}
(x,0)\meet (y,0)=(x\meet y,0).
\end{equation}
Now consider four elements~$x,x',y,y'\in V$ satisfying $x\equiv^* x'$ and~$y\equiv^* y'$.
From the definition of restriction, we have $(x,0)\equiv (x',0)$ and~$(y,0)\equiv (y',0)$.
Applying the definition of lattice congruence to~$\equiv$, we obtain that~$(x,0)\meet (y,0)\equiv (x',0)\meet (y',0)$.
Applying~\eqref{eq:meet0} to this relation yields~$(x\meet y,0)\equiv (x'\meet y',0)$, from which we obtain~$x\meet y\equiv^* x'\meet y'$ with the definition of restriction.
The argument for joins is dual, proving that $\equiv^*$ is indeed a lattice congruence of~$L_H$.

\begin{figure}[t!]
\makebox[0cm]{ 
\includegraphics{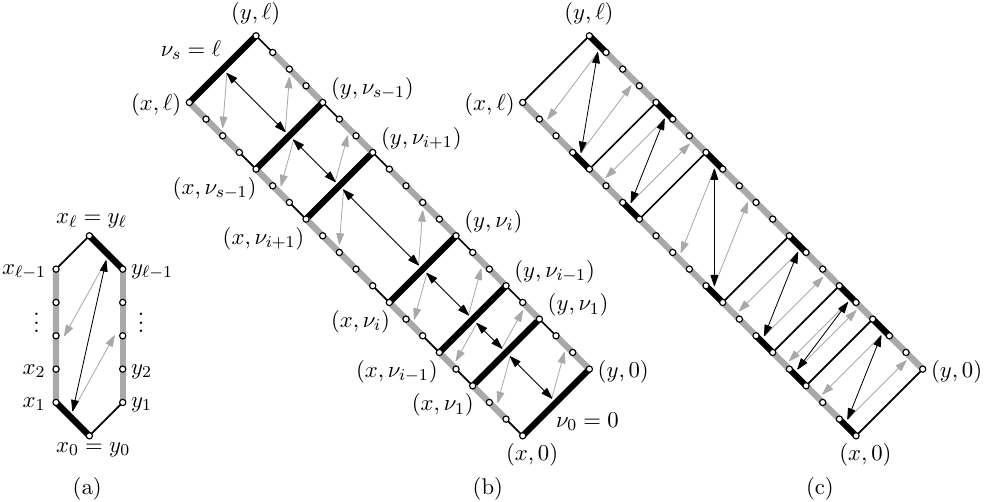}
}
\caption{Illustration of the proof of Proposition~\ref{prop:susp-cong}.
The bold lines indicate pairs of elements that are in the same equivalence class.
The arrows indicate (directed) forcing constraints between pairs of equivalent pairs.}
\label{fig:forcing}
\end{figure}

The proof of~(ii) and~(iii) needs the following auxiliary claim~(a), which is immediate from the definition of lattice congruence; see Figure~\ref{fig:forcing}~(a):
Consider the lattice~$L$ formed by two chains~$x_0\lessdot x_1\lessdot\cdots\lessdot x_\ell$ and $y_0\lessdot y_1\lessdot\cdots\lessdot y_\ell$ with $x_0=y_0$, $x_\ell=y_\ell$, and $x_i\neq y_j$ for any $i,j\in\{1,\ldots,\ell-1\}$, and let~$\equiv$ be a lattice congruence on~$L$.
If $x_0\equiv x_1$, then we have $y_\ell\equiv y_{\ell-1}\equiv \cdots\equiv y_1$.
If $y_\ell\equiv y_{\ell-1}$, then we have $x_0\equiv x_1\equiv \cdots\equiv x_{\ell-1}$.

Given $\{x,y\}\in E$ and a sequence~$\nu=(\nu_0,\ldots,\nu_s)$ where $s\geq 1$ and $0=\nu_0<\nu_1<\cdots<\nu_s=\ell$, where $s\geq 1$, the \defn{$(\ell,\nu)$-ladder} is the $\ell$-suspended graph over~$\{x,y\}$ in which precisely the edges~$\{(x,\nu_i),(y,\nu_i)\}$ are present for all $i=0,\ldots,s$; see Figure~\ref{fig:forcing}~(b).

To prove~(ii) and~(iii), consider a cover~$x\lessdot y$ in~$L_H$.
Then the interval~$[(x,0),(y,\ell)]$ in~$L_G$ contains exactly the elements~$C(x)\cup C(y)$ and its cover graph is an $(\ell,\nu)$-ladder~$L$ for some sequence~$\nu=(\nu_0,\ldots,\nu_s)$ determined by~$G$.
From our auxiliary claim~(a), we obtain the following about the ladder~$L$; see Figure~\ref{fig:forcing}~(b):
If $(x,\nu_i)\equiv (y,\nu_i)$, then we have $(x,\nu_{i-1})\equiv (y,\nu_{i-1})$ and $(x,\nu_{i+1})\equiv (y,\nu_{i+1})$, and furthermore $(x,\nu_i)\equiv (x,\nu_i-1)\equiv \cdots\equiv (x,\nu_{i-1}+1)$ and $(y,\nu_i)\equiv (y,\nu_i+1)\equiv \cdots \equiv (y,\nu_{i+1}-1)$.
Applying this observation exhaustively, we note that either $(x,\nu_i)\not\equiv (y,\nu_i)$ for all $i=0,\ldots,s$, or $(x,\nu_i)\equiv (y,\nu_i)$ for all $i=0,\ldots,s$, and in the latter case for every $i\in\{0,\ldots,\ell\}$ there is some $j\in\{0,\ldots,\ell\}$ with $(x,i)\equiv (y,j)$ and for every $j\in\{0,\ldots,\ell\}$ there is some $i\in\{0,\ldots,\ell\}$ with $(x,i)\equiv (y,j)$.
This proves property~(ii).

To prove~(iii), note that the auxiliary claim~(a) applied to the ladder~$L$ yields that $(x,0)\equiv (x,1)\equiv \cdots\equiv (x,\ell)$ if and only if $(y,0)\equiv (y,1)\equiv \cdots\equiv (y,\ell)$; see Figure~\ref{fig:forcing}~(c).
Consequently, the number of equivalence classes of~$\equiv$ along every chain~$C(x)$, $x\in V$, is either equal to~1 for all~$x\in V$ or strictly larger than~1 for all~$x\in V$, which proves~(iii).

It remains to prove~(iv).
If $\lambda(x)=0$ for all $x\in V$, then we have $L_G\simeq L_H$ and therefore $L_G/{\equiv}\simeq L_H/{\equiv^*}$, so the claim is trivial.
On the other hand, if $\lambda(x)\geq 1$ for all $x\in V$, which is the only other possibility by~(iii), then the cover graph of~$L_G/{\equiv}$ has an induced subgraph that is $\lambda$-suspended over~$L_H/{\equiv^*}$ by~(ii) and Lemma~\ref{lem:cong}.
Using the assumption that the cover graph of~$L_H/{\equiv^*}$ has a Hamiltonian path, applying the first part of Lemma~\ref{lem:susp-ham} proves that the cover graph of~$L_G/{\equiv}$ also has a Hamiltonian path.

This completes the proof.
\end{proof}

\subsection{Proofs of Theorems~\ref{thm:super-ham} and~\ref{thm:super-quotient}}

We are now ready to present the proof of our two main theorems.

\begin{proof}[Proof of Theorem~\ref{thm:super-ham}]
The proof is by induction on the rank~$n\geq 2$ of~$\cH$.
If $n=2$, then the graph of regions~$G(\cH)$ is a cycle of length~$2|\cH|$ and thus clearly has a Hamiltonian cycle of even length.
If $n\geq 3$, then consider the decomposition of~$\cH$ into two subarrangements~$\cH_0$ and~$\cH_1$.
As $\cH_0$ is a supersolvable arrangement of rank~$n-1$, we know by induction that~$G(\cH_0)$ has a Hamiltonian cycle of even length.
Applying Lemma~\ref{lem:susp} and the second part of Lemma~\ref{lem:susp-ham} yields that~$G(\cH)$ has a Hamiltonian cycle of even length.
\end{proof}

\begin{proof}[Proof of Theorem~\ref{thm:super-quotient}]
The proof is by induction on the rank~$n\geq 1$ of~$\cH$.
If $n=1$, then the cover graph of~$L$ is a single edge.
If $n=2$, then the cover graph of~$L$ is a cycle of length~$2|\cH|$.
By Lemma~\ref{lem:cong}~(i), the equivalence classes of~$\equiv$ are intervals, so in the two aforementioned cases the cover graph of $L/{\equiv}$ is either a cycle, a single edge, or a single vertex, hence it has a Hamiltonian path.

If $n\geq 3$, then consider the decomposition of~$\cH$ into two subarrangements~$\cH_0$ and~$\cH_1$.
Let $R_0\in\cR(\cH)$ be a canonical base region, then $\rho(R_0)$ is canonical for~$\cH_0$.
We know that $L':=P(\cH_0,\rho(R_0))=P(G(\cH_0),\rho(R_0))$ and $L=P(\cH,R_0)=P(G(\cH),R_0)$ are lattices by Lemma~\ref{lem:P-lattice}.
Furthermore, we know that $G(\cH)$ is (isomorphic to a graph that is) $\ell$-suspended over~$G(\cH_0)$ for $\ell:=|\cH_1|$ by Lemma~\ref{lem:susp}.

For any~$R\in\cR(\cH)$, we let $p(R)\in\cR(\cH)$ be the minimum of the chain on the fiber~$\varphi(R)$ in~$L$.
Let $\equiv$ be lattice congruence on~$L$.
By Proposition~\ref{prop:susp-cong}~(i) the restriction ${\equiv^*}:=\{(\rho(X),\rho(Y))\mid X=p(X)\text{ and } Y=p(Y)\text{ and } X\equiv Y\}$ is a lattice congruence on~$L'$.
As $\cH_0$ is a supersolvable arrangement of rank~$n-1$, we obtain by induction that the cover graph of~$L'/{\equiv^*}$ has a Hamiltonian path.
Therefore, applying Proposition~\ref{prop:susp-cong}~(iv) yields that the cover graph of~$L$ also has a Hamiltonian path.
This completes the proof.
\end{proof}

\section{Zigzag languages of signed permutations}
\label{sec:zigzag-signed}

In this section we develop a theory of zigzag languages of signed permutations, analogous to the theory for (unsigned) permutations presented in~\cite{MR4391718} and discussed in Section~\ref{sec:zigzag}.
We also show applications to generating pattern-avoiding signed permutations and the corresponding combinatorial objects.
With this paper we hope to encourage further study of pattern-avoiding signed permutations by the pattern-avoidance community.

\subsection{Patterns in permutations and signed permutations}

Recall from Section~\ref{sec:sperm} that we use bars to denote negative signs in a signed permutation.
We write~$B_n$ for the set of signed permutations of~$[n]$ in full notation (length~$2n$), and we will use this representation throughout.
We index the entries in a signed permutation~$\pi\in B_n$ by integers from~$\pmn$, i.e., we consider~$\pi$ as a string~$\pi=(a_{\ol{n}},\ldots,a_{\b{1}},a_1,\ldots,a_n)$, where $a_{\ol{i}}=\ol{a_i}$ holds for all $i\in\pmn$.
Notably, the index~$0$ does not exist.
Furthermore, we write~$S_n$ for the set of (unsigned) permutations of length~$n$.
We write $\varepsilon$ for the empty string, i.e., we have $B_0=\{\varepsilon\}$ and $S_0=\{\varepsilon\}$.
The \defn{identity} signed permutation is~$\id_n:=\ol{n}\cdots \b{2}\b{1}12\cdots n\in B_n$.

We say that a string~$\pi$ of integers \defn{contains a pattern~$\tau\in S_k$}, if there is a subsequence of $k$ (not necessarily consecutive) entries in~$\pi$ in the same relative order as in~$\tau$.
Otherwise we say that~$\pi$ \defn{avoids}~$\tau$.
We apply this definition in both cases when $\pi\in S_n$ is a permutation of length~$n$, or when $\pi\in B_n$ is a signed permutation in full notation, where in the second case the signs of entries are taken into account when comparing relative positions.
For example, $\pi=\b{4}\b{2}31\b{1}\b{3}24\in B_4$ contains the pattern~$\tau=231$, as witnessed by the subsequence~$\b{2}3\b{3}$, or alternatively by the subsequences~$\b{2}1\b{3}$ and~$\b{2}\b{1}\b{3}$.
We write~$S_n(\tau)$ for the set of all permutations from~$S_n$ that avoid~$\tau$, and similarly $B_n(\tau)$ for the set of all signed permutations from~$B_n$ that avoid~$\tau$.

Generalizing the notion of pattern-containment for signed permutations, we also allow underlining or overlining some of entries in a pattern~$\tau\in S_n$, with the interpretation that an underlined entry of the pattern has to match a strictly positive entry in the signed permutation, whereas an overlined entry of the pattern has to match a strictly negative entry in the signed permutation.
We refer to such a pattern as a \defn{marked} (signed) permutation pattern.
For example, in the signed permutation~$\pi$ from before, the first two occurrences of the pattern~$231$ are also occurrences of the pattern~$2\ul{3}1$, whereas the third occurrence is also an occurrence of the pattern~$2\b{3}1$.
Note that there are some trivial equivalences of patterns, in the sense that the set of avoiders are identical.
Specifically, if an entry in a marked pattern is underlined, then all larger entries may also be underlined.
Similarly, if an entry is overlined, then all smaller entries may also be overlined.
For example, the patterns $\ul{2}31$ and~$\ub{2}\ub{3}1$ are equivalent in this sense, and so are the patterns $2\b{3}1$, $\b{2}\b{3}1$, $2\b{3}\b{1}$ and $\b{2}\b{3}\b{1}$.

\subsection{Jumps in signed permutations}

Given a signed permutation~$\pi\in B_n$, a \defn{left hop} of the value~$a_i>0$, $i\neq \ol{n}$, is a transposition of the value~$a_i$ with the value~$b$ directly to its left, i.e., either with $b:=a_{i-1}$ if $i\neq 1$ or $b:=a_{\b{1}}$ if $i=1$, subject to the constraint that~$a_i>b$.
Similarly, a \defn{right hop} of the value~$a_i>0$, $i\neq n$, is a transposition of the value~$a_i$ with the value~$b$ directly to its right, i.e., either with $b:=a_{i+1}$ if $i\neq \b{1}$ or $b:=a_1$ if $i=\b{1}$, subject to the constraint that~$a_i>b$.
A \defn{left jump of~$a_i>0$ by $d$ steps} is sequence of $d$ consecutive left hops of the value~$a_i$.
Similarly, a \defn{right jump of~$a_i>0$ by $d$ steps} is a sequence of $d$ consecutive right hops of the value~$a_i$.
In all these operations the modifications are applied to the permutation in full notation in a symmetric way, so that the property $a_{\ol{i}}=\ol{a_i}$ for all $i\in\pmn$ is maintained.
For a subset $L_n\seq B_n$ and a permutation~$\pi\in L_n$, a jump in~$\pi$ is \defn{minimal} w.r.t.~$L_n$ if no jump of the same value by fewer steps yields a signed permutation from~$L_n$.
For example, if $L_4:=B_4(\ub{2}31)$, then in the permutation $\pi=\b{2}\b{1}43\b{3}\b{4}12\in L_4$ a right jump of the value~4 by 5 steps is minimal, yielding the permutation $\b{4}\b{2}\b{1}3\b{3}124\in L_4$ as each of the intermediate permutations $\b{2}\b{1}34\b{4}\b{3}12$, $\b{2}\b{1}3\b{4}4\b{3}12$, $\b{2}\b{1}\b{4}3\b{3}412$, $\b{2}\b{4}\b{1}3\b{3}142$ contains the marked pattern~$\ub{2}31$.

We propose the following algorithm to generate a set of signed permutations by minimal jumps.
An analogous algorithm for (unsigned) permutations was first presented in~\cite{MR4391718}.

\begin{algo}{Algorithm~J}{Greedy minimal jumps for signed permutations}
This algorithm attempts to greedily generate a set of signed permutations $L_n\seq B_n$ using minimal jumps starting from an initial signed permutation $\pi_0 \in L_n$.
\begin{enumerate}[label={\bfseries S\arabic*.}, leftmargin=8mm, noitemsep, topsep=3pt plus 3pt]
\item{} [Initialize] Visit the initial signed permutation~$\pi_0$.
\item{} [Jump] Generate an unvisited signed permutation from~$L_n$ by performing a minimal jump of the largest possible positive value in the most recently visited signed permutation.
If no such jump exists, or the jump direction is ambiguous, then terminate.
Otherwise visit this signed permutation and repeat~S2.
\end{enumerate}
\end{algo}

\subsection{Zigzag languages of signed permutations}

For any~$\pi\in B_{n-1}$ and integer~$i\in \pmn$ we write $c_i(\pi)\in B_n$ for the signed permutation obtained by inserting the largest value~$n$ and smallest value~$\ol{n}$ at positions~$i$ and~$\ol{i}$, respectively.
In particular, $c_{\ol{n}}(\pi)=n\,\pi\,\ol{n}$ and $c_n(\pi)=\ol{n}\,\pi\,n$.
Conversely, for any $\pi\in B_n$ we write $p(\pi)$ for the permutation in~$B_{n-1}$ obtained by deleting the extremal values~$n$ and~$\ol{n}$.
Clearly, we have $p(c_i(\pi))=\pi$.
For example, for $\pi\in 1\b{2}2\b{1}$ we have
\[ \big(c_{\b{3}}(\pi),c_{\b{2}}(\pi),c_{\b{1}}(\pi),c_1(\pi),c_2(\pi),c_3(\pi)\big)
=({\red 3}1\b{2}2\b{1}{\red\b{3}},1{\red 3}\b{2}2{\red\b{3}}\b{1},1\b{2}{\red 3}{\red\b{3}}2\b{1},1\b{2}{\red\b{3}}{\red 3}2\b{1},1{\red\b{3}}\b{2}2{\red 3}\b{1},{\red\b{3}}1\b{2}2\b{1}{\red 3}). \]

A set $L_n\seq B_n$ is a \defn{zigzag language}, if either $n=0$ and $L_0=\{\varepsilon\}$, or if $n\geq 1$ and $L_{n-1}:=\{p(\pi)\mid \pi \in L_n\}$ is a zigzag language such that for every $\pi\in L_{n-1}$ we have that $c_{\ol{n}}(\pi)=n\,\pi\,\ol{n}\in L_n$ and $c_n(\pi)=\ol{n}\,\pi\,n\in L_n$.

\begin{theorem}
\label{thm:zigzag}
Given any zigzag language of signed permutations~$L_n$ and initial permutation~$\pi_0=\id_n$, Algorithm~J visits every signed permutation from~$L_n$ exactly once.
\end{theorem}

The proof is analogous to the proof of Theorem~1 in~\cite{MR4391718}.

\begin{proof}
Given a zigzag language~$L_n$, we define a sequence~$J(L_n)$ of all signed permutations from~$L_n$, and we prove that Algorithm~J generates the permutations of~$L_n$ exactly in this order.
For any $\pi\in L_{n-1}$ we let $\rvec{c}(\pi)$ be the sequence of all $c_i(\pi)\in L_n$ for $i=\ol{n},\ol{n-1},\ldots,\b{1},1,\ldots,n$, starting with~$c_{\ol{n}}(\pi)$ and ending with~$c_n(\pi)$, and we let $\lvec{c}(\pi)$ denote the reverse sequence, i.e., it starts with~$c_n(\pi)$ and ends with~$c_{\ol{n}}(\pi)$.
In words, those sequences are obtained by inserting the new largest value~$n$ from left to right, or from right to left, respectively, into~$\pi$, in all possible positions that yield a signed permutation from~$L_n$, skipping the positions that yield a permutation that is not in~$L_n$.
The sequence~$J(L_n)$ is defined recursively as follows:
If $n=0$ then we define $J(L_0):=\varepsilon$, and if $n\geq 1$ then we consider the finite sequence $J(L_{n-1})=:(\pi_1,\pi_2,\ldots)$ and define
\begin{equation*}
J(L_n):=\lvec{c}(\pi_1),\rvec{c}(\pi_2),\lvec{c}(\pi_3),\rvec{c}(\pi_4),\ldots,
\end{equation*}
i.e., this sequence is obtained from the previous sequence by inserting the new largest value~$n$ in all possible positions alternatingly from right to left, or from left to right.

A straightforward induction shows that Algorithm~J generates the signed permutations in~$L_n$ precisely in the order~$J(L_n)$.
We omit the details.
The crucial observation is that a minimal jump in a signed permutation~$\pi\in L_{n-1}$ is also a minimal jump in the two permutations~$c_{\ol{n}}(\pi)=n\,\pi\,\ol{n}$ and $c_n(\pi)=\ol{n}\,\pi\,n$, because the values~$n$ and~$\ol{n}$ are inserted at the boundaries.
\end{proof}

\begin{remark}
\label{rem:cyclic}
It is easy to see that in the ordering of signed permutations~$J(L_n)$, the first and last permutation differ in a minimal jump if and only if $|L_i|$ is even for all $i=1,\ldots,n-1$.
Consequently, if this condition holds, then the Gray code produced by Algorithm~J is actually cyclic.
\end{remark}

The following lemma provides easily checkable sufficient conditions on a signed permutation pattern~$\tau$ so that $B_n(\tau)$ is a zigzag language.

We say that an infinite sequence of sets~$L_0,L_1,\ldots$ with $L_i\seq B_i$ for all $i\geq 0$ is \defn{hereditary} if $L_{i-1}=p(L_i)$ holds for all $i\geq 1$.
We say that a signed permutation pattern~$\tau$ is \defn{tame}, if $B_n(\tau)$, $n\geq 0$, is a hereditary sequence of zigzag languages.

\begin{lemma}
\label{lem:tame}
If in a marked pattern~$\tau\in S_k$, the smallest value~1 is underlined or not at the leftmost or rightmost position, and the largest value~$k$ is overlined or not at the leftmost or rightmost position, then $\tau$ is tame.
In particular, if $\tau\in S_k$ has neither the smallest value~1 nor the largest value~$k$ at the leftmost or rightmost position, then it is tame.
\end{lemma}

Four of the signed permutation patterns shown in Table~\ref{tab:pat} classify as tame by Lemma~\ref{lem:tame}.

\begin{proof}
Consider a marked pattern~$\tau\in S_k$ satisfying the conditions of the lemma.
We prove that $B_n(\tau)$, $n\geq 0$, is a hereditary sequence of zigzag languages by induction on~$n$.
The induction basis~$n=0$ is trivial, as $B_0(\tau)=\{\varepsilon\}$.
For the induction step, we assume that $B_{n-1}(\tau)$ is a zigzag language.
If $\pi\in B_n$ avoids~$\tau$, then clearly $p(\pi)\in B_{n-1}$ also avoids~$\tau$, as removing the values~$n$ and~$\ol{n}$ cannot create an occurrence of the pattern~$\tau$.
If follows that $B_{n-1}(\tau)\supseteq p(B_n(\tau))$.
We next show that if $\pi\in B_{n-1}$ avoids~$\tau$, then both $c_{\ol{n}}(\pi)=n\,\pi\,\ol{n}$ and~$c_n(\pi)=\ol{n}\,\pi\,n$ avoid~$\tau$.
As $p(c_{\ol{n}}(\pi))=\pi$ and $p(c_n(\pi))=\pi$, this proves that $B_{n-1}(\tau)=p(B_n(\tau))$ and it also establishes the zigzag property.
Indeed, any occurrence of~$\tau$ in one of the signed permutations $c_{\ol{n}}(\pi)$ and $c_n(\pi)$ must contain~$\ol{n}$ or~$n$, and those entries must match the smallest and largest values in~$\tau$ with the correct sign, respectively, which must be at the leftmost or rightmost position in~$\tau$.
This, however, is ruled out by the assumptions on~$\tau$ stated in the lemma.
\end{proof}

\newcommand{\dittotikz}{%
    \tikz{
        \draw [line width=0.12ex] (-0.2ex,0) -- +(0,0.8ex)
            (0.2ex,0) -- +(0,0.8ex);
        \draw [line width=0.08ex] (-0.6ex,0.4ex) -- +(-1.2em,0)
            (0.6ex,0.4ex) -- +(1.2em,0);
    }%
}
\begin{table}[h!]
\caption{Various signed permutation patterns~$\tau$ and corresponding counting sequences and combinatorial objects.
The third column indicates whether Algorithm~J succeeds to generate the set of all pattern-avoiding signed permutations.
A pattern being tame is sufficient for this property by Theorem~\ref{thm:zigzag}.}
\label{tab:pat}
\makebox[0cm]{ 
\begin{tabular}{llllll}
$\tau$ & tame & Alg~J & $|B_n(\tau)|, n=0,1,\ldots,6$ & OEIS & combinatorial objects \\ \hline
$2\ub{1}$        & no  &     & $1,2,4,8,16,32,64$        & \OEIS{A000079} & binary strings \\
$\ub{2}\b{1}$    & no  & yes & $1,1,2,6,24,120,720$      & \OEIS{A000142} & permutations \textrightarrow{} Sec.~\ref{sec:pat2} \\
\hline
$23\ub{1}$       & yes & yes & $1,2,8,40,224,1344,8448$  & \OEIS{A151374}, \OEIS{A052701} & two-colored binary trees \textrightarrow{} Sec.~\ref{sec:pat1} \\

$213$            & no  & yes & $1,2,4,8,16,32,64$        & \OEIS{A000079} & binary strings \textrightarrow{} Sec.~\ref{sec:pat3} \\
$\ub{2}31$       & no  & yes & $1,2,6,20,70,252,924$     & \OEIS{A000984} & symmetric triangulations \textrightarrow{} Sec.~\ref{sec:pat4} \\
$\b{2}31$        & no  &     &  \dittotikz               & \dittotikz & \\

$2\ub{3}\b{1}$   & no  &     & $1,2,4,10,34,154,874$     & \OEIS{A003422} & \\
$\b{2}\ub{3}1$   & no  &     & $1,2,6,22,94,462,2606$    & \OEIS{A193763} & \\

$23\b{1}$        & no  &     & $1,2,4,9,23,65,197$       & \OEIS{A014137} & \\
$2\ub{3}1$       & no  &     & \dittotikz                & \dittotikz     & \\
\hline
$3142$           & yes & yes & $1,2,7,32,169,974,5947$   & \OEIS{A115197} & \\
$3412$           & yes & yes & $1,2,7,33,183,1118,7281$  & \OEIS{A086618} & \\
$\ub{3}41\b{2}$  & yes & yes & $1,2,7,34,209,1546,13327$ & \OEIS{A002720} & \\
\end{tabular}
}
\end{table}

\begin{remark}
So far we have only considered avoidance of a single pattern.
However, the theory generalizes straightforwardly to avoiding multiple patterns.
This is based on the observation that the intersection of two zigzag languages is again a zigzag language (also the hereditary property is preserved).
Consequently, if we have tame patterns~$\tau_1,\ldots,\tau_\ell$, then the set of signed permutations avoiding each of~$\tau_1,\ldots,\tau_\ell$ is also a zigzag language that can be generated by Algorithm~J.
\end{remark}

Note that Theorem~\ref{thm:zigzag} provides a sufficient condition for Algorithm~J to generate a set of signed permutations~$L_n\seq B_n$ exhaustively, namely to be a zigzag language.
To close this section, we consider one of the tame patterns mentioned in Table~\ref{tab:pat} and present the resulting listing of combinatorial objects obtained from applying Algorithm~J.

\subsubsection{The tame pattern~$23\protect\ub{1}$}
\label{sec:pat1}

Let $\tau:=23\protect\ub{1}$.
We present a bijection between signed permutations avoiding the pattern~$\tau$ and binary trees whose vertices are colored in one of two colors, red or blue, say.
We write $\cT_n$ for the sets of binary trees with $n$ vertices, and $\cT_n'$ for the set of two-colored binary trees with $n$ vertices.

\begin{theorem}
\label{thm:pat1}
Let $\tau:=23\ub{1}$.
There is a bijection~$g:\cT_n'\rightarrow B_n(\tau)$, and therefore $|B_n(\tau)|=2^n C_n$, where $C_n:=\frac{1}{n+1}\binom{2n}{n}$ are the Catalan numbers.
\end{theorem}

\begin{proof}
The bijection~$g$ uses as a building block the following well-known bijection $f:\cT_n\rightarrow S_n(231)$ between binary trees and 231-avoiding permutations; see Figure~\ref{fig:bij-basic}.
We consider each tree~$T\in\cT_n$ as a binary search tree, with the vertices labeled bijectively with integers~$1,\ldots,n$ such that the label of every vertex is larger than all labels in the left subtree and smaller than all labels in the right subtree.
Given a tree~$T$ with root~$r$ and left and right subtrees~$T^\uL$ and~$T^\uR$, we define $f(T):=(r,f(T^\uL),f(T^\uR))$, i.e., we first write the label of the root, then recursively all labels in the left subtree, then recursively all labels in the right subtree.
One can easily check that this mapping~$f$ is indeed a bijection between~$\cT_n$ and~$S_n(231)$, and that it induces a one-to-one correspondence between tree rotations and minimal jumps in 231-avoiding permutations (for details, see \cite[Sec.~3.3]{MR4391718}).

To construct the desired bijection~$g:\cT_n'\rightarrow B_n(\tau)$, note that $\tau=23\ub{1}=\ub{2}\ub{3}\ub{1}$, i.e., when searching for occurrences of this pattern in a signed permutation in full notation, one only needs to consider the positive entries, whereas all negative entries can be completely ignored.
Hence, the mapping~$g$ can be defined as follows:
Given a two-colored tree~$T'\in\cT_n'$, we first consider the underlying uncolored tree~$T\in\cT_n$, and construct the permutation $\pi:=f(T)\in S_n(231)$.
In the second step, we define which $n$ out of the $2n$ entries of~$g(T')$ receive a negative sign.
Specifically, the entry at position~$i\in[n]$ of~$g(T')$ receives a negative sign if vertex~$i$ is colored red in~$T'$, and a positive sign if vertex~$i$ is colored blue.
Note that in the first case this forces the entry at position~$\ol{i}$ to have a positive sign, and in the second case this forces entry at position~$\ol{i}$ to have a negative sign.
In the third step, we fill the string~$\pi$ into the~$n$ positions of~$g(T')$ with a positive sign from left to right, which defines the signed permutation~$g(T')$ completely (the values of the negatively signed entries are forced).
It can be checked directly that this mapping~$g$ is indeed a bijection between~$\cT_n'$ and~$B_n(\tau)$.

It is known that $|\cT_n|=C_n$, and since each tree with $n$ vertices can be colored in $2^n$ ways, this proves the claimed counting formula.
\end{proof}

As $\tau$ is tame by Lemma~\ref{lem:tame}, we may use Algorithm~J to generate~$B_n(\tau)$; see Figure~\ref{fig:trees}.
The resulting Gray code is cyclic by Remark~\ref{rem:cyclic}.
The minimal jumps performed by Algorithm~J translate to the following three types of operations on the two-colored trees~$\cT_n'$:
Vertex~1 switches its color; vertices~$i$ and~$i+1$ exchange their colors; a tree rotation combined with a cyclic rotation of colors among an interval of vertices.

\begin{figure}
\makebox[0cm]{ 
\includegraphics[scale=0.7]{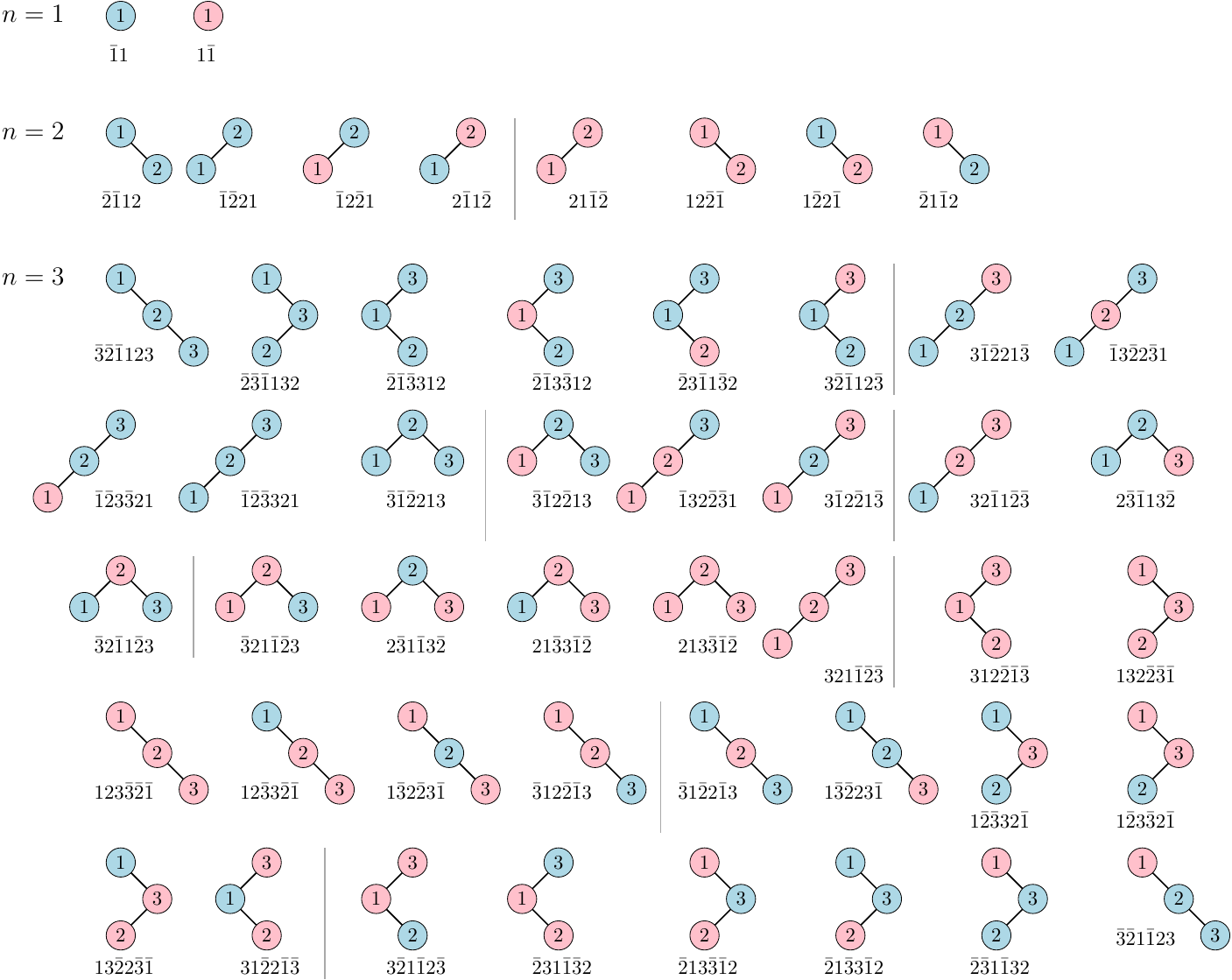}
}
\caption{Listings of signed permutations~$B_n(\tau)$ for $n=1,2,3$ for the tame pattern~$\tau=23\protect\ub{1}$ generated by Algorithm~J and corresponding two-colored binary trees.}
\label{fig:trees}
\end{figure}

\section{Three exceptional patterns}
\label{sec:except}

Interestingly, we found three patterns~$\tau$ that are \emph{not} tame and for which $B_n(\tau)$ is \emph{not} a zigzag language (i.e., Theorem~\ref{thm:zigzag} does not apply), and yet Algorithm~J succeeds in generating the set~$B_n(\tau)$ exhaustively.
We were not able to treat those exceptional patterns in a unified way, and we were also not able to find more such exceptional patterns for which Algorithm~J would work.
Consequently, in this section we describe each of these patterns separately.
The last one is particularly important, as it corresponds to the type~$B$ associahedron.

\subsection{The pattern~\texorpdfstring{$\protect\ub{2}\b{1}$}{21} and (unsigned) permutations}
\label{sec:pat2}

Let $\tau:=\ub{2}\b{1}$.
Signed permutations avoiding this pattern have all negative entries on the set of positions~$[\ol{n}]$, and all positive entries on the set of positions~$[n]$, and no other constraints, i.e., they are in bijection with (unsigned) permutations.
The pattern~$\tau$ is not tame and~$B_n(\tau)$ is not a zigzag language, and yet Algorithm~J succeeds in generating the set~$B_n(\tau)$ when initialized with $\pi_0:=\id_n$, and the corresponding ordering of permutations is the Steinhaus-Johnson-Trotter ordering.

\subsection{The pattern~$213$ and binary strings}
\label{sec:pat3}

Let $\tau:=213$.
We present a bijection between signed permutations avoiding the pattern~$\tau$ and binary strings of fixed length.

\begin{theorem}
\label{thm:pat3}
Let $\tau:=213$.
There is a bijection~$h:B_n(\tau)\rightarrow\{0,1\}^n$, and therefore $|B_n(\tau)|=2^n$.
\end{theorem}

\begin{proof}
The following properties can be easily verified by induction.
The structure of a signed permutation~$\pi\in B_{n-1}(\tau)$ is either
\[ \pi=\id_{n-1} \quad (\text{case~1}), \]
or for some $k\in\{0,1,\ldots,n-2\}$ we have
\[ \pi=(n-k-1,n-k,\ldots,n-2,n-1,\pi',\ol{n-1},\ol{n-2},\ldots,\ol{n-k-1}) \quad (\text{case~2}) \]
with $\pi'\in B_{n-k-2}(\tau)$.
For every such permutation~$\pi$ there are exactly two indices~$i\in\pmn$ such that $c_i(\pi)\in B_n(\tau)$, namely, in case~1
\begin{alignat*}{2}
c_{\ol{n}}(\pi)&=(n\,\pi,\ol{n}) \quad &&(\rightarrow \text{case~2}), \\
c_n(\pi)&=(\ol{n},\pi,n)=(\ol{n},\id_{n-1},n)=\id_n \quad &&(\rightarrow \text{case~1});
\end{alignat*}
and in case~2
\begin{alignat*}{2}
c_{\ol{n}}(\pi)&=(n,\pi,\ol{n}) \quad &&(\rightarrow \text{case~2}), \\
c_{\ol{n-k-1}}(\pi)&=(n-k-1,\ldots,n-1,n,\pi',\ol{n},\ol{n-1},\ldots,\ol{n-k-1}) \quad &&(\rightarrow \text{case~2}). \\
\end{alignat*}
In other words, in both cases the new largest value~$n$ can be inserted at the leftmost position, and directly after the maximum increasing prefix.

The bijection~$h:B_n(\tau)\rightarrow\{0,1\}^n$ can thus be defined as follows:
Given~$\pi\in B_n(\tau)$ and $i\in[n]$, consider $\pi':=p^{n-i}(\pi)\in B_i(\tau)$, and if the largest value~$i$ in~$\pi'$ is at the leftmost position, then the $i$th bit of~$h(\pi)$ is defined to be~1, and 0 otherwise.
\end{proof}

The pattern~$\tau$ is not tame and~$B_n(\tau)$ is not a zigzag language, and yet Algorithm~J succeeds in generating the set~$B_n(\tau)$ when initialized with $\pi_0:=\id_n$; see Figure~\ref{fig:bits}.
The resulting ordering of binary strings is the classical binary reflected Gray code, i.e., minimal jumps performed by the algorithm translate to single bitflips.

\begin{figure}
\makebox[0cm]{ 
\includegraphics{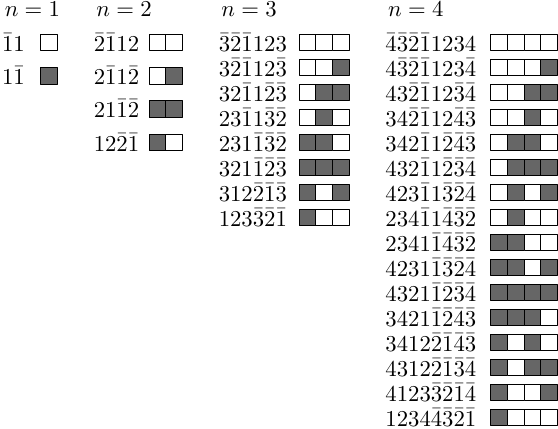}
}
\caption{Listings of signed permutations~$B_n(\tau)$ for $n=1,2,3,4$ for the non-tame pattern~$\tau=213$ generated by Algorithm~J and corresponding binary strings.
Each 0-bit is visualized as a white square, and each 1-bit as a black square.
The resulting ordering is the classical binary reflected Gray code.}
\label{fig:bits}
\end{figure}

However, there is no one-to-one correspondence between minimal jumps in $\tau$-avoiding permutations and bitflips in the hypercube.
In particular, the initial signed permutation~$\id_n$ admits only one possible minimal jump (the value~$n$ jumping all the way to the other side).
Consequently, the listing of $\tau$-avoiding signed permutations is not cyclic under minimal jumps, even though the binary reflected Gray code is cyclic, i.e., the last binary string differs only in a single bit from the first string.

\subsection{The pattern~\texorpdfstring{$\protect\ub{2}31$}{231} and point-symmetric triangulations}
\label{sec:pat4}

For the rest of this section, we consider the permutation pattern~$\tau:=\ub{2}31$.
This pattern is not tame and~$B_n(\tau)$ is not a zigzag language, and yet Algorithm~J can be used to generate the set~$B_n(\tau)$ exhaustively.
In order to prove this (Theorem~\ref{thm:2b31} below), we precisely describe the structure of $\tau$-avoiding signed permutations.
This also helps us to establish a bijection between signed permutations in~$B_n(\tau)$ and point-symmetric triangulations of a convex~$(2n+2)$-gon, with the property that minimal jumps in the permutations are in one-to-one correspondence to flips in the triangulations (see Section~\ref{sec:bij} and Theorem~\ref{thm:h-bij} therein).
As a consequence, Algorithm~J not only lists all signed permutation of~$B_n(\tau)$, but the listing actually corresponds to a Hamiltonian path on the $B$-associahedron \cite{MR1979780,MR2258260}; see Figures~\ref{fig:congB}, \ref{fig:triang} and~\ref{fig:gc-triang}.

The bijection is essentially the same as the one described in \cite[Sec.~7]{MR2258260}.
However, we describe it purely combinatorially, without reference to lattice-theoretic concepts.
Our goal is to give a simple, explicit description that can easily be exploited for the purpose of Gray codes.

\subsubsection{Triangulations}
\label{sec:triang}

We consider a set of $n+2$ points in the plane, placed equidistantly on the unit circle and labeled $0,1,\ldots,n,\b{0}$ in counterclockwise (ccw) order.
We write $\Delta_n$ for the set of triangulations on this point set.
A \defn{flip} in a triangulation removes one of its inner edges and replaces it by the other diagonal of the resulting empty quadrilateral.
It is well-known that there is a bijection between $\Delta_n$ and~$S_n(231)$, and that both families of objects are counted by the Catalan numbers, i.e., we have $|\Delta_n|=|S_n(231)|=C_n=\frac{1}{n+1}\binom{2n}{n}$.

The bijection~$g:\Delta_n\rightarrow S_n(231)$ will be used as a building block later, so we define it in the following; see Figure~\ref{fig:bij-basic}:
Given a triangulation~$T\in\Delta_n$, we consider the triangle in~$T$ seen through the edge~$0\b{0}$.
To obtain~$g(T)$, we process each of the $n$ the triangles one by one, recording one vertex of each triangle.
Specifically, for every (ccw) triangle~$abc$ seen through the edge~$ac$, we record the label of the third vertex~$b$, and we then recursively compute~$g(T^\uL)$ and~$g(T^\uR)$, where $T^\uL$ and~$T^\uR$ are the subtriangulations of~$T$ seen through the left edge~$ab$ and the right edge~$bc$, respectively.
Therefore, if $b$ is the third vertex of the triangle in~$T$ containing the edge~$0\b{0}$, then $g(T):=(b,g(T^\uL),g(T^\uR))$.

\begin{lemma}[\cite{MR4391718}]
\label{lem:g-bij}
The mapping~$g:\Delta_n\rightarrow S_n(231)$ is a bijection that induces a one-to-one correspondence between flips in triangulations and minimal jumps in 231-avoiding permutations.
\end{lemma}

Note that the composition $f^{-1}\circ g:\Delta_n\rightarrow \cT_n$, where $f:\cT_n\rightarrow S_n(231)$ is the bijection described in the proof of Theorem~\ref{thm:pat1}, is the geometric duality mapping that assigns to every triangulation its corresponding dual tree (seen through the edge~$0\b{0}$).
Furthermore, note that flips in triangulations are in one-to-one correspondence with tree rotations under this bijection.

\begin{figure}[h!]
\includegraphics[page=4]{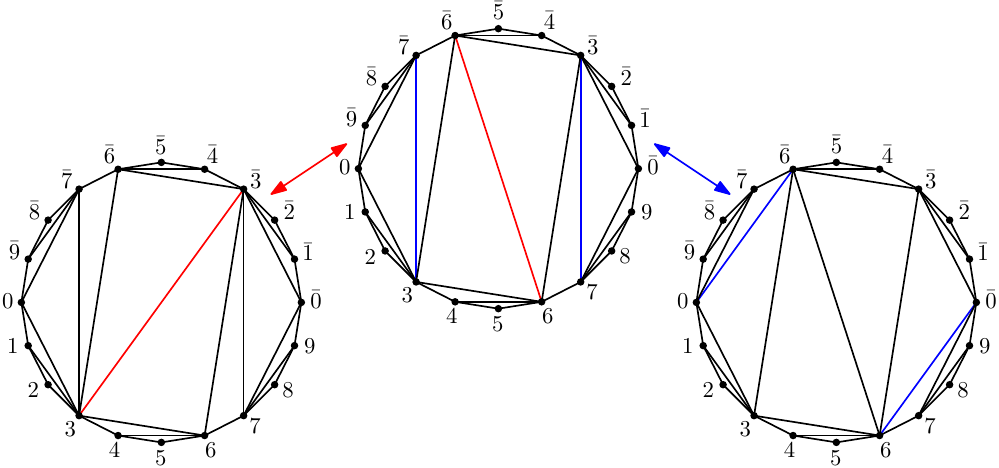}
\caption{Bijections between binary trees, 231-avoiding permutations and triangulations of a convex polygon.}
\label{fig:bij-basic}
\end{figure}

\subsubsection{Point-symmetric triangulations}

We consider a set of $2n+2$ points in the plane, placed equidistantly on the unit circle and labeled $0,1,\ldots,n,\b{0},\b{1},\ldots,\ol{n}$ in ccw order; see Figure~\ref{fig:symm-triang}.
Note that the points~$i$ and~$\ol{i}$ are opposite to each other on the circle.
A \defn{symmetric triangulation} is a triangulation of this point set that is point-symmetric w.r.t.\ the origin, i.e., whenever an edge~$uv$ is present, then the edge~$\ol{u}\,\ol{v}$ must also be present.
We write $\Delta_n^*$ for the set of all symmetric triangulations on this point set.
Note that every such triangulation has a unique edge~$u\ol{u}$ through the origin, which we call the \defn{central} edge.
A \defn{flip} in a symmetric triangulation either consists of a single flip of the central edge, or a pair of symmetric flips not involving the central edge.

\begin{figure}
\makebox[0cm]{ 
\includegraphics[page=1]{triang-bij}
}
\caption{A symmetric triangulation, and the two possible flip types.}
\label{fig:symm-triang}
\end{figure}

Observe that a symmetric triangulation is uniquely determined by the subtriangulation on one side of the central edge, and that there $n+1$ different central edges $i\ol{i}$, $i=0,\ldots,n$.
It follows that $|\Delta_n^*|=(n+1)|\Delta_n|=(n+1)C_n=\binom{2n}{n}$, i.e., symmetric triangulations are counted by the central binomial coefficients.

\subsubsection{Structure of \texorpdfstring{$\tau$}{tau}-avoiding signed permutations}

Our first lemma, illustrated in Figure~\ref{fig:matrix231}, describes the structure of 231-avoiding (unsigned) permutations.
In the following, for a string~$x=(x_1,\ldots,x_s)$, we write $|x|:=s$ for its length.

\begin{lemma}
\label{lem:struc-231}
For a permutation $\pi=(a_1,\ldots,a_n)\in S_n$, let $s\in[n]$ be the index of the value~1 in~$\pi$, and define $\pi^\uL=(x_s,\ldots,x_1):=(a_1,\ldots,a_s)$ and $\pi^\uR:=(a_{s+1},\ldots,a_n)$.
Then $\pi=(\pi^\uL,\pi^\uR)$ avoids~$231$ if and only if the following conditions hold:
\begin{enumerate}[label=(\roman*),leftmargin=8mm]
\item We have $x_s>x_{s-1}>\cdots>x_1=1$, i.e., the values to the left of~1 appear in decreasing order.
\item There is a decomposition $\pi^\uR=(\pi^\uR_1,\ldots,\pi^\uR_s)$ where $x_1<\pi^\uR_1<x_2<\pi^\uR_2<\cdots<x_{s-1}<\pi^\uR_{s-1}<x_s<\pi^\uR_s$, such that each of the subpermutations~$\pi^\uR_j$, $j=1,\ldots,s$, avoids 231.
\end{enumerate}
\end{lemma}

The inequality $x_j<\pi^\uR_j<x_{j+1}$ means that all values of the subpermutation $\pi^\uR_j$ are larger than $x_j$ and smaller than~$x_{j+1}$.

\begin{figure}[h!]
\includegraphics[page=1]{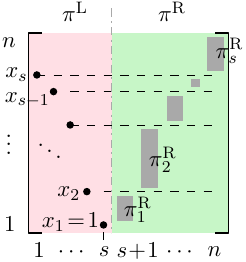}
\caption{Illustration of the structure of a $231$-avoiding permutation as captured by Lemma~\ref{lem:struc-231}.
Specifically, the figure shows the corresponding permutation matrix, i.e., a point in column~$i$ and row~$j$ indicates that the permutation has the value~$j$ at position~$i$.}
\label{fig:matrix231}
\end{figure}

\begin{proof}
We first assume that $\pi\in S_n$ avoids~231.
To prove~(i), note that if there are indices $i<j<s$ with $a_i<a_j$, then $a_ia_ja_s=a_ia_j1$ is an occurrence of the pattern~231, a contradiction.
To prove~(ii), we first establish the following auxiliary claim:
For any $j\in[s]$, we have that in~$\pi^\uR$ all entries smaller than~$x_j$ are to the left of all entries larger than~$x_j$.
Indeed, if there is a $j\in[s]$ and indices $s+1\leq k<\ell\leq n$ such that $a_k>x_j>a_\ell$, then $x_ja_ka_\ell$ is an occurrence of~231, a contradiction.
This proves the decomposition of~$\pi$ into subpermutations~$\pi^\uR_j$, $j\in[s]$, claimed in~(ii).
Clearly, each of these subpermutations must also avoid 231.

Now assume that $\pi\in S_n$ satisfies (i) and (ii).
If $a_ia_ja_k$, $i<j<k$, is an occurrence of~231 in~$\pi$, then we have $a_k<a_i<a_j$, and we distinguish three possible cases:
(1)~The values~$a_i$ and~$a_j$ are both in~$\pi^\uL$: this is impossible because of~(i).
(2)~The value~$a_i$ is in~$\pi^\uL$ and $a_j,a_k$ are in~$\pi^\uR$: this is impossible because of the decomposition into subpermutations stated in~(ii).
(3)~The values~$a_i,a_j,a_k$ are all in~$\pi^\uR$: because of the inequalities $\pi^\uR_1<\pi^\uR_2<\cdots<\pi^\uR_{s-1}<\pi^\uR_s$ and $a_i,a_j>a_k$ the values~$a_i$ and~$a_j$ must be belong to the same subpermutation as~$a_k$, i.e., $a_i,a_j,a_k$ are all in~$\pi^\uR_\hj$ for some $\hj\in[s]$, which is impossible because of the 231-avoidance condition stated in~(ii).
\end{proof}

The next lemma, illustrated in Figure~\ref{fig:matrix2b31}~(a), describes the structure of $\tau$-avoiding signed permutations.
For any string~$x=(a_1,\ldots,a_k)$ of integers, we define $x':=(\ol{a_k},\ldots,\ol{a_1})$, i.e., this operation reverses the string and complements all signs.

Let $\pi=(a_{\ol{n}},\ldots,a_{\b{1}},a_1,\ldots,a_n)\in B_n$.
We define $N(\pi):=\{i\in[n] \mid a_i<0\}$ and
\begin{subequations}
\label{eq:stpi}
\begin{equation}
\label{eq:spi}
s_\pi:=\begin{cases} \max N(\pi) & \text{if } N(\pi)\neq \emptyset, \\
                     0 & \text{else}.
   \end{cases}
\end{equation}
In words, $s_\pi$ is the largest positive index for which the corresponding entry of~$\pi$ is negative, or 0 if no such index exists.
Furthermore, we let $p_\pi\in\pmn$ be the position of the value~1 in~$\pi$.
Lastly, we define
\begin{equation}
\label{eq:tpi}
t_\pi:=\max\{s_\pi,p_\pi\}.
\end{equation}
\end{subequations}

\begin{figure}[h!]
\begin{tabular}{cc}
\includegraphics[page=2]{matrix} &
\includegraphics[page=3]{matrix} \\
(a) & (b)
\end{tabular}
\caption{Illustration of the structure of a $\tau$-avoiding signed permutation as described by (a) Lemma~\ref{lem:struc-2b31} and (b) Lemma~\ref{lem:struc-2b31p}.}
\label{fig:matrix2b31}
\end{figure}

\begin{lemma}
\label{lem:struc-2b31}
For a signed permutation $\pi=(a_{\ol{n}},\ldots,a_{\b{1}},a_1,\ldots,a_n)\in B_n$, let $s:=s_\pi$ be as defined in~\eqref{eq:spi}.
Define $\pi^\uM:=(a_{\ol{s}},\ldots,a_{\b{1}},a_1,\ldots,a_s)$, $\pi^\uR:=(a_{s+1},\ldots,a_n)$ and $\pi^\uL:=(a_{\ol{n}},\ldots,a_{\ol{s+1}})=(\pi^\uR)'$, and let $x_s,x_{s-1},\ldots,x_1$ denote the sequence of positive entries of~$\pi^\uM$ from left to right.
Then $\pi=(\pi^\uL,\pi^\uM,\pi^\uR)$ avoids~$\tau$ if and only if the following conditions hold:
\begin{enumerate}[label=(\roman*),leftmargin=8mm]
\item We have $x_s>x_{s-1}>\cdots>x_1$, i.e., the positive entries of~$\pi^\uM$ appear in decreasing order.
\item There is a decomposition $\pi^\uR=(\pi^\uR_0,\pi^\uR_1,\ldots,\pi^\uR_s)$ where $\pi^\uR_0<x_1<\pi^\uR_1<x_2<\pi^\uR_2<\cdots<x_{s-1}<\pi^\uR_{s-1}<x_s<\pi^\uR_s$, such that each of the subpermutations~$\pi^\uR_j$, $j=0,\ldots,s$, avoids 231.
\end{enumerate}
\end{lemma}

We emphasize that the pattern~$231$ mentioned in~(ii) has no underscore.
Note that~$|\pi^\uM|=2s_\pi$ and $|\pi^\uR|=|\pi^\uL|=n-s_\pi$.

\begin{proof}
We first assume that $\pi\in B_n$ avoids~$\tau$.
To prove~(i), note that if there are indices $i<j$ with $x_i<x_j$, then $x_ix_ja_s=x_ix_j\ol{x_s}$ is an occurrence of the pattern~$\tau$, a contradiction.
To prove~(ii), we first establish the following auxiliary claim:
For any $j\in[s]$, we have that in~$\pi^\uR$ all entries smaller than~$x_j$ are to the left of all entries larger than~$x_j$.
Indeed, if there is a $j\in[s]$ and indices $s+1\leq k<\ell\leq n$ such that $a_k>x_j>a_\ell$, then $x_ja_ka_\ell$ is an occurrence of~$\tau$, a contradiction.
This proves the decomposition of~$\pi$ into subpermutations~$\pi^\uR_j$, $j=0,\ldots,s$, claimed in~(ii).
As all values in~$\pi^\uR$ are positive by definition, any occurrence of~231 would also be an occurrence of~$\tau$, so each of the subpermutations~$\pi^\uR_j$, $j=0,\ldots,s$, must avoid~231.

We now assume that~$\pi\in B_n$ satisfies~(i) and~(ii).
If $a_ia_ja_k$, $i<j<k$, is an occurrence of~$\tau$ in~$\pi$, then we have $a_k<a_i<a_j$ and $a_i>0$, so $a_i,a_j,a_k$ are all in~$\pi^\uM$ or~$\pi^\uR$.
We only need to consider three possible cases:
(1)~The values~$a_i$ and~$a_j$ are both in~$\pi^\uM$: this is impossible because of~(i).
(2)~The value~$a_i$ is in~$\pi^\uM$ and $a_j,a_k$ are in~$\pi^\uR$: this is impossible because of the decomposition into subpermutations stated in~(ii).
(3)~The values~$a_i,a_j,a_k$ are all in~$\pi^\uR$: because of the inequalities $\pi^\uR_0<\pi^\uR_1<\pi^\uR_2<\cdots<\pi^\uR_{s-1}<\pi^\uR_s$ and $a_i,a_j>a_k$ the values $a_i$ and~$a_j$ must belong to the same subpermutation as~$a_k$, i.e., $a_i,a_j,a_k$ are all in $\pi^\uR_\hj$ for some $\hj\in\{0,\ldots,s\}$, which is impossible because of the 231-avoidance condition stated in~(ii).
\end{proof}

We also need the following alternative structural description of $\tau$-avoiding signed permutations; see Figure~\ref{fig:matrix2b31}~(b).

\begin{lemma}
\label{lem:struc-2b31p}
For a signed permutation $\pi=(a_{\ol{n}},\ldots,a_{\b{1}},a_1,\ldots,a_n)\in B_n$, let $s:=s_\pi$, $p:=p_\pi$ and~$t:=t_\pi$ be as defined in~\eqref{eq:stpi}.
Define $\pi^\uM:=(a_{\ol{t}},\ldots,a_{\b{1}},a_1,\ldots,a_t)$, $\pi^\uR:=(a_{t+1},\ldots,a_n)$ and $\pi^\uL:=(a_{\ol{n}},\ldots,a_{\ol{t+1}})=(\pi^\uR)'$, and let $x_t,x_{t-1},\ldots,x_1$ denote the sequence of positive entries of~$\pi^\uM$ from left to right.
Then $\pi=(\pi^\uL,\pi^\uM,\pi^\uR)$ avoids~$\tau$ if and only if the following conditions hold:
\begin{enumerate}[label=(\roman*),leftmargin=8mm]
\item We have $x_t>x_{t-1}>\cdots>x_1=1$, i.e., the positive entries of~$\pi^\uM$ appear in decreasing order, ending with~1.
\item In the decomposition $\pi^\uR=(\pi^\uR_1,\ldots,\pi^\uR_s)$ where $x_1<\pi^\uR_1<x_2<\pi^\uR_2<\cdots<x_{t-1}<\pi^\uR_{t-1}<x_t<\pi^\uR_t$, each of the subpermutations~$\pi^\uR_j$, $j\in[t]$, avoids 231.
\end{enumerate}
\end{lemma}

By Lemma~\ref{lem:struc-2b31p}, a $\tau$-avoiding signed permutation $\pi$ can be viewed as a 231-avoiding permutation~$\sigma$ (recall Lemma~\ref{lem:struc-231}), combined with the reversed and complemented permutation~$\sigma'$ such that the positive values to the left and including~1 of~$\sigma$ and their negated counterparts in~$\sigma'$ are interleaved arbitrarily (in a symmetric way).

\begin{proof}
Note that the value~1 is always in~$\pi^\uM$, because the positive values of~$\pi$ are all in either~$\pi^\uM$ or~$\pi^\uR$, and if the value~1 were in~$\pi^\uR$, then we would have $s>t$, a contradiction to the definition~\eqref{eq:tpi}.

If $s\geq p$, then the claims of the lemma are identical to Lemma~\ref{lem:struc-2b31}.
On the other hand, if $s<p$, then we apply Lemma~\ref{lem:struc-2b31} to obtain a decomposition $\pi=(\pi^{\uL^*},\pi^{\uM^*},\pi^{\uR^*})$ with $\pi^{\uR^*}=(\pi^{\uR^*}_0,\pi^{\uR^*}_1,\ldots,\pi^{\uR^*}_s)$, and we then apply Lemma~\ref{lem:struc-231} to further decompose~$\pi^{\uR^*}_0$, yielding a decomposition that has the desired properties.
\end{proof}

We note that $|\pi^\uM|=t_\pi$ and $|\pi^\uR|=|\pi^\uL|=n-t_\pi$.
Furthermore, as the value~1 belongs to~$\pi^\uM$ we have $\pi^\uM\neq \varepsilon$.

For $\pi\in B_n(\tau)$, we refer to $(\pi^\uL,\pi^\uM,\pi^\uR)$ and $\pi^\uR=(\pi^\uR_1,\ldots,\pi^\uR_t)$ as given by Lemma~\ref{lem:struc-2b31p} as \defn{staircase decomposition} of~$\pi$, and to the sequence~$x_t>\cdots>x_1$ as \defn{staircase values}.
Note that the length of~$\pi^\uM$ is~$2t$ and the length of~$\pi^\uR$ and~$\pi^\uL$ is~$n-t$.

From Lemma~\ref{lem:struc-2b31p}, we can directly deduce the following structure of minimal jumps inside a $\tau$-avoiding signed permutation; see Figure~\ref{fig:flips}.

\begin{lemma}
\label{lem:min-jumps}
Let $\pi=(a_{\ol{n}},\ldots,a_n)\in B_n(\tau)$, and consider its staircase decomposition $\pi=(\pi^\uL,\pi^\uM,\pi^\uR)$ and $\pi^\uR=(\pi^\uR_1,\ldots,\pi^\uR_t)$ with staircase values~$x_t>\cdots>x_1$, as given by Lemma~\ref{lem:struc-2b31p}.
A minimal jump of a value~$a_i>0$ in~$\pi$ is exactly one of the following:
\begin{enumerate}[label=(\roman*),leftmargin=8mm]
\item If $a_i$ is in~$\pi^\uM$ and its neighboring entry~$b$ in~$\pi^\uM$ is negative (either $b=a_{i-1}$ or $b=a_{i+1}$), it is an adjacent transposition $a_i\leftrightarrow b$ within~$\pi^\uM$.
\item If $a_i$ is in~$\pi^\uM$ and $a_{i+1}>0$, i.e., $a_i=x_j$ for some $j\in[t]$, then it is a right jump ending directly to the left of~$\pi^\uR_j$.
\item If $a_i$ is the leftmost entry of~$\pi^\uR_j$ for some $j\in[t]$, then it is a left jump ending directly to the left of~$x_j$.
\item If $a_i$ is in~$\pi^\uR_j$ for some $j\in[s]$, then it is a jump within the subpermutation~$\pi^R_j$ that is minimal w.r.t.\ avoidance of the pattern~231.
\end{enumerate}
\end{lemma}

Note that left and right jumps as in~(i) are inverse to each other, and the same is true for left and right jumps as in~(iv).
On the other hand, right jumps as in~(ii) are the inverse of the left jumps in~(iii).
We refer to a minimal jump as in~(i) or~(iv) as a \defn{close} minimal jump, and to a jump as in~(ii) or~(iii) as a \defn{far} minimal jump.
Note that close minimal jumps do not change the lengths of the three parts of the staircase decomposition, whereas a far jump either decreases or increases (in case~(ii) and~(iii), respectively) the length of the middle part~$\pi^\uM$ by~2.

For a signed permutation~$\pi\in B_{n-1}(\tau)$, it is easy to see (using Lemma~\ref{lem:struc-2b31p}) that $c_n(\pi)\in B_n(\tau)$, i.e., inserting $n$ at the rightmost position and $\ol{n}$ at the leftmost position yields again a permutation that avoids~$\tau$.
A similar statement is false for~$c_{\ol{n}}(\pi)$, which prevents~$B_n(\tau)$ from being a zigzag language.
For example, for $\pi=\b{2}\b{1}12\in B_2(\tau)$ we have $c_{\b{3}}(\pi)=3\b{2}\b{1}12\b{3}$, and the substring~$12\b{3}$ is an occurrence of~$\tau$.

The following lemma describes the leftmost possible position to insert the value~$n$ into a $\tau$-avoiding signed permutation~$\pi\in B_{n-1}(\tau)$, so that the resulting signed permutation is again $\tau$-avoiding.
Furthermore, importantly for Algorithm~J, minimal jumps are preserved under such insertions.

\begin{lemma}
\label{lem:min-jumps-preserve}
Let $\pi\in B_{n-1}(\tau)$, let $t:=t_\pi$ be as defined in~\eqref{eq:stpi}, and consider the staircase decomposition $\pi=(\pi^\uL,\pi^\uM,\pi^\uR)$ as given by Lemma~\ref{lem:struc-2b31p}.
Then the smallest integer~$i\in\pmn$ such that $c_i(\pi)$ avoids~$\tau$ equals $i=\ol{t+1}$, i.e., we have $\sigma:=c_{\ol{t+1}}(\pi)=(\pi^\uL,n,\pi^\uM,\ol{n},\pi^\uR)\in B_n(\tau)$.
Furthermore, the staircase decomposition of~$\sigma$ is $(\sigma^\uL,\sigma^\uM,\sigma^\uR)=(\pi^\uL,(n,\pi^\uM,\ol{n}),\pi^\uR)$, and any minimal jump in~$\pi$ is also a minimal jump in~$\sigma$.
\end{lemma}

\begin{proof}
Combine Lemmas~\ref{lem:struc-2b31p} and~\ref{lem:min-jumps}.
\end{proof}

We emphasize that a close minimal jump in~$\pi$ with $d$ steps is still a close minimal jump by $d$ steps in~$\sigma$, whereas a far minimal jump in~$\pi$ turns into a far minimal jump by $d+1$ steps in~$\sigma$, because the value~$\ol{n}$ is inserted at position~$t+1$.
For example, $\pi:=\b{3}\b{1}2\b{2}13\rightarrow \b{1}32\b{2}\b{3}1$ is a far minimal left jump of~3 by 4~steps in~$\pi$, and we have $t_\pi=2$, so $\sigma:=c_{\ol{t+1}}(\pi)=c_{\b{3}}(\pi)=\b{3}4\b{1}2\b{2}1\b{4}3$, and $\sigma=\b{3}4\b{1}2\b{2}1\b{4}3\rightarrow 4\b{1}32\b{2}\b{3}1\b{4}$ is a far minimal left jump of~3 by 5 steps in~$\sigma$.

\subsubsection{A bijection between $\Delta_n^*$ and $B_n(\tau)$}
\label{sec:bij}

The following definitions are illustrated in Figure~\ref{fig:bij}.

\begin{figure}[t!]
\makebox[0cm]{ 
\includegraphics[page=2]{triang-bij}} \\
\makebox[0cm]{ 
\includegraphics[page=3]{triang-bij}}
\caption{Definition of the bijection~$h$ between symmetric triangulations and $\tau$-avoiding signed permutations.
The triangulation is the same as in Figure~\ref{fig:symm-triang}.}
\label{fig:bij}
\end{figure}

\begin{figure}[t!]
\makebox[0cm]{ 
\includegraphics[page=5,scale=0.8]{triang-bij}
}
\caption{Correspondence between minimal jumps in $\tau$-avoiding signed permutations as described by Lemma~\ref{lem:min-jumps} (left) and flips in symmetric triangulations (right) under the bijection~$h$.}
\label{fig:flips}
\end{figure}

Consider a symmetric triangulation~$T\in\Delta_n^*$.
Let~$C$ be the set of triangles in~$T$ that have points on both sides of the line~$\b{0}0$.
We refer to them as \defn{crossing} triangles of~$T$.
Let $X=\{x_1<x_2<\cdots<x_s\}$ be the set of positive vertices of triangles in~$C$, excluding the vertices~$x_0:=0$ and~$x_{s+1}:=\b{0}$.
Note that we have $|C|=2s$.

We first define a permutation~$h^\uM(T)$ of the integers in~$X\cup\ol{X}$ as follows.
We sweep through the triangles in~$C$ in the order in which they intersect the ray from~$\b{0}$ to~$0$, starting with the triangle~$x_s\ol{x_1}\b{0}$ and ending with the triangle~$0x_1\ol{x_s}$.
For every triangle in~$C$, we add one of its vertices to the string~$h^\uM(T)$, according to the following rule:
If the triangle is~$abc$ with $a,b\in[n]\cup\{\b{0}\}$, $b=\b{0}$ or $a<b$, and $c\in[\ol{n}]$, we add the vertex~$a$.
If the triangle is~$abc$ with $a,b\in[\ol{n}]\cup\{0\}$, $a=0$ or $a<b$, and $c\in[n]$, we add the vertex~$b$.
One can check that~$(h^\uM(T))'=h^\uM(T)$.
Furthermore, if central edge is~$u\ol{u}$, then $u$ is the smallest/rightmost positive value in the left half of~$h^\uM(T)$, and $\ol{u}$ is the largest/leftmost negative value in the right half of~$h^\uM(T)$.
In the special case were the central edge is~$0\b{0}$, we have $h^\uM(T)=\varepsilon$.

We now define a permutation~$h^\uR(T)$ of the integers in~$Y:=[n]\setminus X$ as follows.
For $j=0,\ldots,s$, we let $T_j$ denote the subtriangulation of~$T$ of non-crossing triangles seen through the edge~$x_jx_{j+1}$.
We define $h^\uR(T):=(h^\uR_0,\cdots,h^\uR_s)$ with $h^\uR_j:=g(T_j)$, where $g$ is the mapping defined in Section~\ref{sec:triang}, and in evaluating~$g(T_j)$ we consider the (sub)triangulation~$T_j$ through the edge~$x_jx_{j+1}$.
Furthermore, we define~$h^\uL(T):=(h^\uR(T))'$, which is a permutation of the integers in~$\ol{Y}=[\ol{n}]\setminus \ol{X}$.

Combining the results from the previous steps, we define $h(T):=(h^\uL(T),h^\uM(T),h^\uR(T))$.

\begin{theorem}
\label{thm:h-bij}
The mapping~$h:\Delta_n^*\rightarrow B_n(\ub{2}31)$ is a bijection that induces a one-to-one correspondence between flips in symmetric triangulations and minimal jumps in $\ub{2}31$-avoiding signed permutations.
\end{theorem}

The theorem shows that the two flip graphs on symmetric triangulations and $\tau$-avoiding signed permutations are isomorphic.
Figure~\ref{fig:triang} shows the $B$-associahedron for $n=3$, labeled with the symmetric triangulations and $\tau$-avoiding signed permutations corresponding to each other under the bijection~$h$.

\begin{proof}
The fact that~$h$ is a bijection can be verified directly, using Lemmas~\ref{lem:g-bij} and~\ref{lem:struc-2b31}.
The correspondence between flips and minimal jumps follows from Lemma~\ref{lem:min-jumps}.
Specifically, the flips corresponding to the four different types of jumps~(i)---(iv) described in the lemma are illustrated in Figure~\ref{fig:flips}~(i)---(iv), respectively.
\end{proof}

\begin{theorem}
\label{thm:2b31}
Given the set $L_n=B_n(\ub{2}31)$ and initial permutation~$\pi_0=\id_n$, Algorithm~J visits every signed permutation from~$L_n$ exactly once.
Furthermore, the last permutation differs from the first one in a minimal jump.
\end{theorem}

The resulting orderings of $\tau$-avoiding signed permutations and symmetric triangulations are shown in Figures~\ref{fig:triang} and~\ref{fig:gc-triang} for $n=1,2,3,4$.

\begin{proof}
The proof is a straightforward adaption of the proof of Theorem~\ref{thm:zigzag}.
We use again the property that if $\pi\in B_{n-1}(\tau)$, then $c_n(\pi)\in B_n(\tau)$ and any minimal jump in~$\pi$ is also a minimal jump in~$c_n(\tau)$.
As a similar statement does not hold for $c_{\ol{n}}(\pi)$, we apply Lemma~\ref{lem:min-jumps-preserve} instead.
Using that $|B_n(\tau)|=\binom{2n}{n}$ is even, an easy induction shows that the last permutation in $J(B_n(\tau))$ is $\ol{n}\cdots\b{3}\b{2}1\ol{1}23\cdots n$ (recall Remark~\ref{rem:cyclic}), i.e., it differs from the first permutation~$\id_n$ in a minimal jump of~1.
\end{proof}

\section*{Acknowledgements}

We thank the reviewers of the extended abstract of this paper for numerous helpful comments and suggestions.

\bibliographystyle{alpha}
\bibliography{refs}

\newcommand{\etalchar}[1]{$^{#1}$}
\begin{thebibliography}{HHMW22}

\bibitem[BB05]{MR2133266}
A.~Bj\"{o}rner and F.~Brenti.
\newblock {\em Combinatorics of {C}oxeter groups}, volume 231 of {\em Graduate
  Texts in Mathematics}.
\newblock Springer, New York, 2005.

\bibitem[BEZ90]{MR1036875}
A.~Bj\"{o}rner, P.~H. Edelman, and G.~M. Ziegler.
\newblock Hyperplane arrangements with a lattice of regions.
\newblock {\em Discrete Comput. Geom.}, 5(3):263--288, 1990.

\bibitem[CHM{\etalchar{+}}23]{MR4614413}
J.~Cardinal, H.~P. Hoang, A.~Merino, O.~Mi\v{c}ka, and T.~M\"{u}tze.
\newblock Combinatorial generation via permutation languages. {V}. {A}cyclic
  orientations.
\newblock {\em SIAM J. Discrete Math.}, 37(3):1509--1547, 2023.

\bibitem[CMM25]{DBLP:journals/talg/CardinalMM25}
J.~Cardinal, A.~Merino, and T.~M{\"{u}}tze.
\newblock Combinatorial generation via permutation languages. {IV.} elimination
  trees.
\newblock {\em {ACM} Trans. Algorithms}, 21(1):13:1--13:41, 2025.

\bibitem[cos]{cos_cperm}
The Combinatorial Object Server: Generate colored permutations.
  \url{http://www.combos.org/cperm}.

\bibitem[CP25]{MR4833064}
J.~Cardinal and V.~Pilaud.
\newblock Rectangulotopes.
\newblock {\em European J. Combin.}, 125:Paper No. 104090, 25, 2025.

\bibitem[CSS18]{MR3878132}
J.~Cardinal, V.~Sacrist\'{a}n, and R.~I. Silveira.
\newblock A note on flips in diagonal rectangulations.
\newblock {\em Discrete Math. Theor. Comput. Sci.}, 20(2):Paper No. 14, 22~pp.,
  2018.

\bibitem[CSW89]{MR1032382}
J.~H. Conway, N.~J.~A. Sloane, and A.~R. Wilks.
\newblock Gray codes for reflection groups.
\newblock {\em Graphs Combin.}, 5(4):315--325, 1989.

\bibitem[Ede84]{MR737888}
P.~H. Edelman.
\newblock A partial order on the regions of {${\bf R}\sp{n}$}\ dissected by
  hyperplanes.
\newblock {\em Trans. Amer. Math. Soc.}, 283(2):617--631, 1984.

\bibitem[FR07]{MR2383126}
S.~Fomin and N.~Reading.
\newblock Root systems and generalized associahedra.
\newblock In {\em Geometric combinatorics}, volume~13 of {\em IAS/Park City
  Math. Ser.}, pages 63--131. Amer. Math. Soc., Providence, RI, 2007.

\bibitem[Gir12]{MR2914637}
S.~Giraudo.
\newblock Algebraic and combinatorial structures on pairs of twin binary trees.
\newblock {\em J. Algebra}, 360:115--157, 2012.

\bibitem[GMN24]{MR4775168}
P.~Gregor, T.~M\"{u}tze, and Namrata.
\newblock Combinatorial generation via permutation languages. {VI}. {B}inary
  trees.
\newblock {\em European J. Combin.}, 122:Paper No. 104020, 40~pp., 2024.

\bibitem[Gra53]{gray_1953}
F.~Gray.
\newblock Pulse code communication, 1953.
\newblock March 17, 1953 (filed Nov. 1947). U.S. Patent 2,632,058.

\bibitem[GZ83]{MR712251}
C.~Greene and T.~Zaslavsky.
\newblock On the interpretation of {W}hitney numbers through arrangements of
  hyperplanes, zonotopes, non-{R}adon partitions, and orientations of graphs.
\newblock {\em Trans. Amer. Math. Soc.}, 280(1):97--126, 1983.

\bibitem[HHMW22]{MR4391718}
E.~Hartung, H.~P. Hoang, T.~M\"{u}tze, and A.~Williams.
\newblock Combinatorial generation via permutation languages. {I}.
  {F}undamentals.
\newblock {\em Trans. Amer. Math. Soc.}, 375(4):2255--2291, 2022.

\bibitem[HM21]{MR4344032}
H.~P. Hoang and T.~M\"{u}tze.
\newblock Combinatorial generation via permutation languages. {II}. {L}attice
  congruences.
\newblock {\em Israel J. Math.}, 244(1):359--417, 2021.

\bibitem[HNT05]{MR2142078}
F.~Hivert, J.-C. Novelli, and J.-Y. Thibon.
\newblock The algebra of binary search trees.
\newblock {\em Theoret. Comput. Sci.}, 339(1):129--165, 2005.

\bibitem[Joh63]{MR159764}
S.~M. Johnson.
\newblock Generation of permutations by adjacent transposition.
\newblock {\em Math. Comp.}, 17:282--285, 1963.

\bibitem[KLL11]{MR2841338}
J.~Korsh, P.~LaFollette, and S.~Lipschutz.
\newblock A loopless implementation of a {G}ray code for signed permutations.
\newblock {\em Publ. Inst. Math. (Beograd) (N.S.)}, 89(103):37--47, 2011.

\bibitem[Knu11]{MR3444818}
D.~E. Knuth.
\newblock {\em The {A}rt of {C}omputer {P}rogramming. {V}ol. 4{A}.
  {C}ombinatorial {A}lgorithms. {P}art 1}.
\newblock Addison-Wesley, Upper Saddle River, NJ, 2011.

\bibitem[KSSW25]{koerber_et_al_2025}
V.~K{\"o}rber, T.~Schnieders, J.~Stricker, and J.~Walizadeh.
\newblock Hamiltonian cycles in simplicial and supersolvable hyperplane
  arrangements.
\newblock \url{https://arxiv.org/abs/2508.14538}, 2025.

\bibitem[Lod04]{MR2108555}
J.-L. Loday.
\newblock Realization of the {S}tasheff polytope.
\newblock {\em Arch. Math. (Basel)}, 83(3):267--278, 2004.

\bibitem[LR12]{MR2871762}
S.~Law and N.~Reading.
\newblock The {H}opf algebra of diagonal rectangulations.
\newblock {\em J. Combin. Theory Ser. A}, 119(3):788--824, 2012.

\bibitem[LRvBR93]{MR1239499}
J.~M. Lucas, D.~Roelants~van Baronaigien, and F.~Ruskey.
\newblock On rotations and the generation of binary trees.
\newblock {\em J. Algorithms}, 15(3):343--366, 1993.

\bibitem[MHPS12]{MR3235205}
F.~M\"uller-Hoissen, J.~M. Pallo, and J.~Stasheff, editors.
\newblock {\em Associahedra, {T}amari lattices and related structures}, volume
  299 of {\em Progress in Mathematics}.
\newblock Birkh\"auser/Springer, Basel, 2012.
\newblock Tamari Memorial Festschrift.

\bibitem[MM23]{MR4598046}
A.~Merino and T.~M\"{u}tze.
\newblock Combinatorial generation via permutation languages. {III}.
  {R}ectangulations.
\newblock {\em Discrete Comput. Geom.}, 70(1):51--122, 2023.

\bibitem[M{\"{u}}t23]{MR4649606}
T.~M{\"{u}}tze.
\newblock Combinatorial {G}ray codes---an updated survey.
\newblock {\em Electron. J. Combin.}, DS26(Dynamic Surveys):99~pp., 2023.

\bibitem[OT92]{MR1217488}
P.~Orlik and H.~Terao.
\newblock {\em Arrangements of hyperplanes}, volume 300 of {\em Grundlehren der
  mathematischen Wissenschaften [Fundamental Principles of Mathematical
  Sciences]}.
\newblock Springer-Verlag, Berlin, 1992.

\bibitem[Pil24]{MR4827881}
V.~Pilaud.
\newblock Acyclic reorientation lattices and their lattice quotients.
\newblock {\em Ann. Comb.}, 28(4):1035--1092, 2024.

\bibitem[Pou14]{MR3197650}
L.~Pournin.
\newblock The diameter of associahedra.
\newblock {\em Adv. Math.}, 259:13--42, 2014.

\bibitem[PP18]{MR3856522}
V.~Pilaud and V.~Pons.
\newblock Permutrees.
\newblock {\em Algebr. Comb.}, 1(2):173--224, 2018.

\bibitem[PPR23]{MR4584712}
A.~Padrol, V.~Pilaud, and J.~Ritter.
\newblock Shard polytopes.
\newblock {\em Int. Math. Res. Not. IMRN}, (9):7686--7796, 2023.

\bibitem[PS19]{MR3964495}
V.~Pilaud and F.~Santos.
\newblock Quotientopes.
\newblock {\em Bull. Lond. Math. Soc.}, 51(3):406--420, 2019.

\bibitem[PSZ23]{MR4675114}
V.~Pilaud, F.~Santos, and G.~M. Ziegler.
\newblock Celebrating {L}oday's associahedron.
\newblock {\em Arch. Math. (Basel)}, 121(5-6):559--601, 2023.

\bibitem[Rea06]{MR2258260}
N.~Reading.
\newblock Cambrian lattices.
\newblock {\em Adv. Math.}, 205(2):313--353, 2006.

\bibitem[Rea12]{MR2864445}
N.~Reading.
\newblock Generic rectangulations.
\newblock {\em European J. Combin.}, 33(4):610--623, 2012.

\bibitem[Rea16]{MR3645055}
N.~Reading.
\newblock Lattice theory of the poset of regions.
\newblock In {\em Lattice theory: special topics and applications. {V}ol. 2},
  pages 399--487. Birkh\"auser/Springer, Cham, 2016.

\bibitem[Sav97]{MR1491049}
C.~Savage.
\newblock A survey of combinatorial {G}ray codes.
\newblock {\em SIAM Rev.}, 39(4):605--629, 1997.

\bibitem[Sim03]{MR1979780}
R.~Simion.
\newblock A type-{B} associahedron.
\newblock {\em Adv. in Appl. Math.}, 30(1-2):2--25, 2003.
\newblock Formal power series and algebraic combinatorics (Scottsdale, AZ,
  2001).

\bibitem[SSW93]{MR1267311}
C.~D. Savage, M.~B. Squire, and D.~B. West.
\newblock Gray code results for acyclic orientations.
\newblock In {\em Proceedings of the {T}wenty-fourth {S}outheastern
  {I}nternational {C}onference on {C}ombinatorics, {G}raph {T}heory, and
  {C}omputing ({B}oca {R}aton, {FL}, 1993)}, volume~96, pages 185--204, 1993.

\bibitem[Sta72]{MR0309815}
R.~P. Stanley.
\newblock Supersolvable lattices.
\newblock {\em Algebra Universalis}, 2:197--217, 1972.

\bibitem[Sta07]{MR2383131}
R.~P. Stanley.
\newblock An introduction to hyperplane arrangements.
\newblock In {\em Geometric combinatorics}, volume~13 of {\em IAS/Park City
  Math. Ser.}, pages 389--496. Amer. Math. Soc., Providence, RI, 2007.

\bibitem[Ste64]{MR157881}
H.~Steinhaus.
\newblock {\em One hundred problems in elementary mathematics}.
\newblock Basic Books, Inc., Publishers, New York, 1964.
\newblock With a foreword by Martin Gardner.

\bibitem[STT88]{MR928904}
D.~D. Sleator, R.~E. Tarjan, and W.~P. Thurston.
\newblock Rotation distance, triangulations, and hyperbolic geometry.
\newblock {\em J. Amer. Math. Soc.}, 1(3):647--681, 1988.

\bibitem[STT19]{MR3886267}
D.~Suyama, M.~Torielli, and S.~Tsujie.
\newblock Signed graphs and the freeness of the {W}eyl subarrangements of type
  {$B_\ell$}.
\newblock {\em Discrete Math.}, 342(1):233--249, 2019.

\bibitem[Tro62]{DBLP:journals/cacm/Trotter62}
H.~F. Trotter.
\newblock Algorithm 115: Perm.
\newblock {\em Commun. {ACM}}, 5(8):434--435, 1962.

\bibitem[Whi96]{MR1416624}
A.~T. White.
\newblock Fabian {S}tedman: the first group theorist?
\newblock {\em Amer. Math. Monthly}, 103(9):771--778, 1996.

\bibitem[Wil13]{MR3126386}
A.~Williams.
\newblock The greedy {G}ray code algorithm.
\newblock In {\em Algorithms and data structures}, volume 8037 of {\em Lecture
  Notes in Comput. Sci.}, pages 525--536. Springer, Heidelberg, 2013.

\bibitem[Zas82]{MR0676405}
T.~Zaslavsky.
\newblock Signed graphs.
\newblock {\em Discrete Appl. Math.}, 4(1):47--74, 1982.

\bibitem[Zas91]{MR1120422}
T.~Zaslavsky.
\newblock Orientation of signed graphs.
\newblock {\em European J. Combin.}, 12(4):361--375, 1991.

\bibitem[Zas01]{MR1808091}
T.~Zaslavsky.
\newblock Supersolvable frame-matroid and graphic-lift lattices.
\newblock {\em European J. Combin.}, 22(1):119--133, 2001.

\end{thebibliography}

\end{document}